\newtheorem{theorem}{{\bf{\small T}{\scriptsize HEOREM}}}[section]
\newtheorem{corollary}{{\bf{\small C}{\scriptsize OROLLARY}}}[section]
\newtheorem{proposition}{{\bf{\small P}{\scriptsize ROPOSITION}}}[section]
\newtheorem{lemma}{{\bf{\small L}{\scriptsize EMMA}}}[section]
\newtheorem{remark}{{\bf{\small R}{\scriptsize EMARK}}}[section]
\newtheorem{definition}{{\bf{\small D}{\scriptsize EFINITION}}}[section]
\renewenvironment{proof}[1]
{\noindent{{\bf{\small{ P}{\scriptsize ROOF}}}.}\hspace{0.1cm} #1} {$\;\qed$\newline}
\def\R{\mathds R}
\def\N{\mathds N}
\def\Z{\mathds Z}
\newcommand{\Zd}{\mathds Z^d}
\def\un{\mathds{1}}
\def\E{\mathds E}
\def\pee{\mathds P}
\def\Oun{\mathcal{O}(1)}
\def\bgamma{\boldsymbol\gamma}
\newcommand{\dd}{\mathop{}\!\mathrm{d}}
\DeclareMathOperator{\sign}{sign}
\font\gfont=cmmi10 scaled \magstep{1.5}    
\font\ggfont=cmmi10 scaled \magstep{2}
\newcommand{\gdelta}{\hbox{\gfont \char14}}
\newcommand{\ggdelta}{\hbox{\ggfont \char14}}
\newcommand{\dist}{d}
\newcommand{\distk}{d_{{\scriptscriptstyle K}}}
\newcommand{\db}{\bar{d}}
\newcommand{\geta}{\hbox{\gfont \char17}}
\newcommand{\gmu}{\hbox{\gfont \char22}}
\newcommand{\gnu}{\hbox{\gfont \char23}}
\newcommand{\gsigma}{\hbox{\gfont \char27}}
\newcommand{\opnorm}{\@ifstar\@opnorms\@opnorm}
\newcommand{\@opnorms}[1]{%
  \left|\mkern-1.5mu\left|\mkern-1.5mu\left|
   #1
  \right|\mkern-1.5mu\right|\mkern-1.5mu\right|
}
\newcommand{\@opnorm}[2][]{%
  \mathopen{#1|\mkern-1.5mu#1|\mkern-1.5mu#1|}
  #2
  \mathclose{#1|\mkern-1.5mu#1|\mkern-1.5mu#1|}
}
\newcommand{\gcb}[1]{\mathrm{GCB}\!\left(#1\right)}
\newcommand{\mcb}[2][]{\mathrm{MCB}\!\left(#1#2\right)}
\begin{document}

\title{On concentration inequalities\\ and their applications\\ for Gibbs measures in lattice systems}

\author[1]{J.-R. Chazottes
\thanks{Email: \texttt{chazottes@cpht.polytechnique.fr}}}

\author[1]{P. Collet
\thanks{Email: \texttt{collet@cpht.polytechnique.fr}}}

\author[2]{F. Redig
\thanks{Email: \texttt{F.H.J.Redig@tudelft.nl}}}

\affil[1]{Centre de Physique Th\'eorique, CNRS UMR 7644, F-91128 Palaiseau Cedex (France)}
\affil[2]{Delft Institute of Applied Mathematics, Technische Universiteit Delft, Nederland}

\date{Dated: \today}

\maketitle

\begin{abstract}
We consider Gibbs measures on the configuration space $S^{\Zd}$,  where mostly $d\geq 2$ and $S$
is a finite set. We start by a short review on concentration inequalities for Gibbs measures.
In the Dobrushin uniqueness regime, we have a Gaussian concentration bound, whereas in
the Ising model (and related models) at sufficiently low temperature, we control all moments and have a stretched-exponential concentration bound. 
We then give several applications of these inequalities whereby we obtain various new results.
Amongst these applications, we get bounds on the speed of convergence of the empirical measure in the sense of Kantorovich distance, fluctuation bounds in the Shannon-McMillan-Breiman theorem, fluctuation bounds for the first occurrence of a pattern, as well as almost-sure central limit theorems.

\smallskip

\noindent {\footnotesize{\bf Keywords and phrases:} Gaussian concentration bound, moment concentration bound,  low-temperature Ising model, Dobrushin uniqueness, $\db$-distance, empirical measure, relative entropy, Kantorovich distance, almost-sure central limit theorem.}
\end{abstract}

\maketitle


\tableofcontents

\section{\textbf{Introduction}}

Concentration inequalities play by now an important role in probability theory and statistics, as well as in various areas such as geometry, functional analysis, discrete mathematics \cite{blm,dp,ledoux}. Remarkably, the scope
of these inequalities ranges from the more abstract to the explicit analysis of given models.
With a view towards our setting, the elementary manifestation of the concentration of measure phenomenon can be formulated as follows.
Consider independent random variables $\{\omega_x, x \in C_n\}$ taking the values $\pm 1$ with equal probability and indexed by the sites of a large but finite discrete cube $C_n$ of ``side length'' $2n+1$ in $\Zd$. The partial sum $\sum_{x\in C_n} \omega_x$ has expectation zero. Of course, this sum varies in an interval of size $\mathcal{O}(n^d)$. But, in fact, it sharply concentrates with very high probability in a much narrower range, namely in an interval of size
$\mathcal{O}(n^{d/2})$. This statement is quantified by the following ``Gaussian bound'' or Hoeffding inequality (see \cite{blm}): 
\[
\pee\left\{ \left|\sum_{x\in C_n} \omega_x\right|\geq u\, (2n+1)^{d/2} \right\}\leq
2\, \exp\left(-\frac{u^2}{2}\right)
\]
for all $n\geq 1$ and for all $u>0$. This is a finite-volume quantitative version of the strong law of large numbers, giving the correct scale as in the central limit theorem.
This phenomenon is not tied to linear combinations of the $\omega_x$'s, like the above sum, but in fact holds for a broad class of nonlinear functions $F$ of the $\omega_x$'s. Thus, we can get tight bounds for the probability that a complicated
or implicitly defined function of the $\omega_x$'s deviates from its expected value. Let us stress that 
concentration inequalities are valid in every finite volume, and not just asymptotically. 

Now, what happens if the $\omega_x$'s are no longer independent? One can expect to still have a Gaussian bound of the same flavour as above provided correlations are weak enough amongst the
$\omega_x$'s (see {\em e.g.} \cite{jrf} about Markov chains, and \cite{kr} for a survey focused on the martingale method). 
In the present paper,  we are interested in Gibbs measures on a configuration space of the form
$\Omega=S^{\Zd}$ where $S$ is a finite set. In the above elementary example, we have $S=\{-1,1\}$ (spins) and the previously considered product measure can be thought as a Gibbs measure at infinite temperature. The first work in this setting is \cite{Kul} in which it was proved that a Gaussian concentration bound holds in  Dobrushin's uniqueness regime (see below for a precise statement). The constant appearing in the bound is directly related to the ``Dobrushin contraction coefficient''. For instance, any finite-range potential at sufficiently high temperature satisfies Dobrushin's condition, like the Ising model. One of the main motivations of \cite{CCKR} was to figure out what happens for the Ising model at {\em low temperature}. One cannot expect that a Gaussian concentration bound holds (see details below), and it was proved in \cite{CCKR} that a stretched-exponential decay of the form $\exp(-c u^\varrho)$ holds, where $0<\varrho <1$ depends on the temperature. Notice that we deal with $d\geq 2$. For $d=1$, the situation is as follows. Finite-range potentials give rise to finite-state Markov chains and thus one has a Gaussian concentration bound. For potentials which are summable in certains sense, one has also a Gausian concentration bound, but the known results are formulated in terms of chains of infinite order (or $g$-measures) rather than Gibbs measures, see \cite{st}.
For long-range potentials, like Dyson models, nothing is known regarding concentration bounds. In that context, let us mention that $g$-measures can be different from Gibbs measures, see \cite{bissacot} and references therein.

The purpose of the present work is to apply these concentration bounds to various types of functions $F$ of the  $\omega_x$'s, both in Dobrushin's uniqueness regime and in the Ising model at sufficiently low temperature. 
For example, we obtain quantitative estimates for the speed of convergence of the empirical measure
to the underlying Gibbs measure in Kantorovich distance. In the Ising model, this speed depends in particular on the temperature regime. Here the estimation of the expected distance raises an extra problem which requires to adapt methods used to estimate suprema of empirical processes. The problem comes from the fact that our configuration
space is topologically a Cantor set.
Another application concerns ``fattening'' finite configurations in the sense of Hamming distance: take,
{\em e.g.}, $S=\{-1,1\}$ and consider the set $\Omega_n=\{\eta_x: x\in C_n\}$.
Now, take a subset $\EuScript{B}_n\subset \Omega_n$ of, say, measure $1/2$, and look at the set
$\EuScript{B}_{n,\epsilon}$ of all configurations in $\Omega_n$ obtained from those in $\EuScript{B}_n$ by flipping, say,
$\epsilon=5\%$ of the spins. It turns out that, for large but finite $n$, the set $\EuScript{B}_{n,\epsilon}$ has probability very close to $1$. 
Besides fluctuation bounds, we also obtain an almost-sure central limit theorem, thereby showing how concentration inequalities can also lead to substantial reinforcements of weak limit theorems in great generality.
 
Concentration inequalities may look weaker than a ``large deviation principle'' \cite{dz}. On one hand, this is true because getting a large deviation principle means that one gets a rate function which gives the correct asymptotic exponential decay to zero of the probability that, {\em e.g.}, $(2n+1)^{-d}\sum_{x\in C_n} \omega_x$ deviates from its expectation (the magnetization of the system). But, on the other hand, it is hopeless to get a large deviation principle for functions of the $\omega_x$'s which do not have some (approximate) additivity property. This rules out many interesting functions of the $\omega_x$'s.
Besides, even in the situation when concentration inequalities and large deviation principles coexist, the former provides simple and useful bounds which are valid in every finite volume.

We also emphasize that concentration inequalities provide upper-bounds which are ``permutation invariant''. In particular, for averages of the form $|\Lambda|^{-1}\sum_{x\in\Lambda} f(T_x \omega)$ one obtains bounds in which the dependence on $\Lambda$ is only through its cardinality, and thus insensitve to its shape.
In the case of the Gaussian concentration bound, one obtains an upper bound for
the logarithm of the exponential moment of $\sum_{x\in\Lambda} f(T_x \omega)$ which is of the order
$|\Lambda|$.
This provides an order of growth as would be provided by large deviation theory in contexts where the latter
is not necessarily available. Indeed, in order to have a large deviation principle, it is necessary that the sets
$\Lambda$ grow as a van Hove sequence, see {\em e.g.} \cite{EKW}. 
An illustrative example is when $\Lambda$ is a subset of $\Z^d$ which is contained in a hyperplane of lower dimension ({\em e.g.}, a subset of one of the coordinate planes). Indeed, there is a priori no large deviation principle available for projections of Gibbs measures on lower dimensional sets (they might fail to satisfy the variational principle), whereas concentration bounds are still possible.

Before giving the outline of this paper, let us mention the papers \cite{dedecker}, \cite{chatterjee1,chatterjee-dey}, and \cite{marton1,marton2}, which deal with concentration inequalities for spin models from statistical mechanics. In \cite{dedecker}, the author establishes, among other things, a Gaussian concentration bound for partial sums of a random field satisfying a ``weak mixing'' condition. This includes the Ising model above its critical temperature. In \cite{chatterjee1,chatterjee-dey}, the authors obtain concentration inequalities for mean-field models, like the Curie-Weiss model. These results follow from a method introduced by Chatterjee in \cite{chatterjee1} (a version of Stein's method).

The rest of our paper is organized as follows. After some generalities on concentration bounds given in Section \ref{sec:genconc} and tailored for our needs, we gather a number of facts on Gibbs measures which we will use in our applications (Section \ref{sec:Gibbs}). We then review the known concentration properties of Gibbs measures, {\em i.e.},
the Gaussian concentration bound which is valid in Dobrushin's uniqueness regime (Section \ref{sec:gcbgauss}), and the moment inequalities, as well as a stretched-exponential concentration bound, which hold for the Ising model at sufficiently low temperature (Section \ref{sec:lowtempconc}). 
Then we derive various applications of the concentration bounds in Sections \ref{sec:app1}-\ref{sec:ASCLT}.

\section{\textbf{Setting}}

\subsection{Configurations and shift action}

We work with the configuration space $\Omega=S^{\Zd}$, where $S$ is a finite set, and $d$ an integer greater than or equal to $2$.
We endow $\Omega$ with the product topology that is generated by cylinder sets.
We denote by $\mathfrak{B}$ the Borel $\sigma$-algebra which coincides with the $\sigma$-algebra generated by these sets.

An element $x$ of $\Zd$ (hereby called a site) can be written as a vector $(x_1,\ldots,x_d)$ in the canonical base of the lattice $\Zd$.
Let $\|x\|_\infty=\max_{1\leq i\leq d} |x_i|$, and denote by $\|x\|_1$ the Manhattan norm,
that is, $\|x\|_1=|x_1|+\cdots + |x_d|$. More generally, given an integer
$p\geq 1$, let $\|x\|_p=(|x_1|^p+\cdots+|x_d|^p)^{1/p}$. If $\Lambda$ is a finite subset of $\Zd$, denote by $\textup{diam}(\Lambda)=\max\{\|x\|_\infty : x\in\Lambda\}$ its diameter, and by $|\Lambda|$ its cardinality. The collection of finite subsets of $\Zd$ will be denoted by $\mathcal{P}$.

We consider the following distance on $\Omega$ : for $\omega,\omega'\in\Omega$, let
\begin{equation}\label{def:dist}
\dist(\omega,\omega')=2^{-k}\quad\textup{where}\; k=\min\{\|x\|_\infty : \omega_x\neq \omega'_x\}.
\end{equation}
This distance induces the product topology, and one can prove that $(\Omega,d)$ is a compact metric space.
Note that $\Omega$ is a Cantor set, so it is totally disconnected.

For $\Lambda\subset\Zd$, we denote by $\Omega_\Lambda$ the projection of $\Omega$
onto $S^\Lambda$. Accordingly, an element of $\Omega_\Lambda$ is denoted by $\omega_\Lambda$ and
is viewed as a configuration $\omega\in\Omega$ restricted to $\Lambda$. Another useful notation is the following. For $\gsigma,\geta\in \Omega$ we denote by $\gsigma_\Lambda\geta_{\Lambda^c}$ the configuration which agrees with $\gsigma$ on $\Lambda$ and with $\geta$ on $\Lambda^c$. Finally, we denote by $\mathfrak{B}_\Lambda$ the $\sigma$-algebra generated by the coordinate maps $f_x:\omega\mapsto \omega_x$, $x\in\Lambda$.

Subsets of particular interest are cubes centered about the origin of $\Zd$: for every $n\in\N$, define
\[
C_n=\big\{x\in\Zd : -n\leq x_i\leq n,\,i=1,2,\ldots,d\big\}.
\]
For $\omega\in \Omega$ and $n\in\N$, define the cylinder set
\[
\EuScript{C}_n(\omega)=\{\eta\in\Omega : \eta_{C_n}=\omega_{C_n}\}.
\]
We simply write $\Omega_n$ for $\Omega_{C_n}$ which is the set of partial configurations supported on
$C_n$.

Finally, the shift action $(T_x,x\in\Zd)$ is defined as usual: for each $x\in\Zd$, $T_x:\Omega\to \Omega$
and $(T_x\omega)_y=\omega_{y-x}$, for all $y\in\Zd$. This corresponds to translating $\omega$ forward by $x$.

\subsection{Functions}

Let $F:\Omega\to\R$ be a continuous function and $x\in\Zd$. We denote by
\[
\gdelta_x (F) = \sup\big\{ |F(\omega) - F(\omega')| : \omega,\omega'\in \Omega\;\textup{differ only at site}\;x\big\}
\]
the oscillation of $F$ at $x$. It is a natural object because, given a finite subset $\Lambda\subset \Zd$ and
two configurations $\omega,\eta\in\Omega$ such that  $\omega_{\Lambda^c}=\eta_{\Lambda^c}$, one has
\[
|F(\omega)-F(\eta)| \leq \sum_{x\in\Lambda} \delta_x(F).
\]
We shall say that  $F:\Omega\to\R$ is a {\em local function} if there exists a finite subset $\Lambda_F$
of $\Zd$ (the dependence set of $F$) such that for all $\omega,\widetilde\omega,\widehat\omega$,
$F(\omega_{\Lambda_F}\widetilde\omega_{\Lambda^c_F}) = F(\omega_{\Lambda_F}\widehat\omega_{\Lambda^c_F})$. Equivalently, $\delta_x(F)=0$ for all $x\notin \Lambda_F$.
It is understood that $\Lambda_F$ is the smallest such set.
When $\Lambda_F=C_n$ for some $n$, $F$ is said to be ``cylindrical''.

Let $C^0(\Omega)$ be the Banach space of continuous functions $F:\Omega\to\R$
equipped with supremum norm $\|F\|_\infty=\sup_{\omega\in\Omega} |F(\omega)|$.
Every local function is continuous and the uniform closure of the set of all local functions is $C^0(\Omega)$.
Given $F$, we write $\ushort{\gdelta}(F)$ for the infinite array $(\gdelta_x(F), x\in\Zd)$.
For every $p\in\N$,  we introduce the semi-norm
\[
\|\ushort{\gdelta} (F)\|_p:=
\|\ushort{\gdelta} (F)\|_{\ell^p(\Zd)}= \Big(\sum_{x\in\Zd} (\gdelta_x(F))^p\Big)^{1/p}.
\]
Finally, we define the following spaces of functions:
\begin{equation}\label{def-Delta-space}
\Delta_p(\Omega)=C^0(\Omega)\cap \left\{ F:\Omega\to\R: F\in  \|\ushort{\gdelta} (F)\|_p<\infty\right\},\,
p\in\N.
\end{equation}
Each of these spaces obviously contains local functions, and $\Delta_p(\Omega)\subset \Delta_q(\Omega)$
if $1\leq p<q\leq+\infty$. Notice that the space of functions such that $\|\ushort{\gdelta} (F)\|_p<\infty$ for
a given $p\in\N$ is neither contained in nor contains $C^0(\Omega)$.

Define the oscillation of a function $F:\Omega\to\R$ as
\[
\delta(F)=\sup F -\inf F = \sup_{\omega,\omega'\in\Omega} |F(\omega)-F(\omega')|. 
\]
If $F\in C^0(\Omega)$, one has
\[
\|\ushort{\gdelta} (F)\|_1=\sum_{x\in\Zd} \delta_x(F)\geq \delta(F).
\]
For $p\in\N$, the semi-norm $\|\ushort{\gdelta} (\cdot)\|_p$ becomes a norm if one considers the quotient space where two functions in $\Delta_p(\Omega)$ are declared to be equivalent if their difference is a constant function. Moreover, this quotient space equipped with the norm $\|\ushort{\gdelta} (\cdot)\|_p$ is a Banach space.

\section{\textbf{Concentration bounds for random fields: abstract definitions and consequences}}\label{sec:genconc}

We state some abstract definitions and their general consequences that we will use repeatedly in the
sequel.

\subsection{Gaussian concentration bound}

\begin{definition}
\leavevmode\\
Let $\nu$ be a probability measure on $(\Omega,\mathfrak{B})$. 
We say that it satisfies the Gaussian concentration bound with constant
$D=D(\nu)>0$ (abbreviated $\gcb{D}$) if, for all functions $F\in\Delta_2(\Omega)$, we have 
\begin{equation}\label{eq:gemb}
\E_\nu \big[\exp\left(F - \E_\nu [F]\right)\big] \leq \exp\left(D\|\ushort{\delta} (F) \|_2^2\right). 
\end{equation}
\end{definition}

A key point in this definition is that $D$ is independent of $F$. Inequality \eqref{eq:gemb} easily implies Gaussian concentration inequalities that we gather in the following proposition in a convenient form for later use.

\begin{proposition}\label{prop-exp-dev}
\leavevmode\\
If a probability measure $\nu$ on $(\Omega,\mathfrak{B})$ satisfies $\gcb{D}$ then,
for all functions $F\in\Delta_2(\Omega)$ and for all $u>0$, one has
\begin{align}
\label{eq:expcon1}
& \gnu\left\{\omega\in \Omega : F(\omega) - \E_\nu [F] \geq u \right\} \leq
\exp\left(-\frac{u^2}{4D\|\ushort{\delta}(F) \|_2^2}\right)\, ,\\
\label{eq:expcon2}
& \gnu\left\{\omega\in \Omega : |F(\omega) - \E_\nu[F]| \geq u \right\} \leq
2\ \exp\left(-\frac{u^2}{4D\|\ushort{\delta} (F) \|_2^2}\right). 
\end{align}
\end{proposition}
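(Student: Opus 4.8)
The plan is to obtain both inequalities from the definition \eqref{eq:gemb} by the classical exponential Chebyshev (Chernoff) argument, the decisive point being that in $\gcb{D}$ the constant $D$ does not depend on $F$, so we are free to replace $F$ by $\lambda F$ for any $\lambda>0$.

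First I would fix $F\in\Delta_2(\Omega)$ and observe that $\lambda F\in\Delta_2(\Omega)$ for every $\lambda>0$, with $\delta_x(\lambda F)=\lambda\,\delta_x(F)$ for all $x\in\Zd$, hence $\|\ushort{\delta}(\lambda F)\|_2^2=\lambda^2\|\ushort{\delta}(F)\|_2^2$. Applying $\gcb{D}$ to $\lambda F$ and using $\lambda F-\E_\nu[\lambda F]=\lambda\big(F-\E_\nu[F]\big)$ gives
\[
\E_\nu\!\Big[\exp\big(\lambda\,(F-\E_\nu[F])\big)\Big]\leq \exp\big(D\lambda^2\|\ushort{\delta}(F)\|_2^2\big).
\]
Since $\exp\big(\lambda(F-\E_\nu[F])\big)$ is nonnegative, Markov's inequality yields, for every $u>0$ and every $\lambda>0$,
\[
\nu\big\{F-\E_\nu[F]\geq u\big\}\leq e^{-\lambda u}\,\E_\nu\!\Big[\exp\big(\lambda(F-\E_\nu[F])\big)\Big]\leq \exp\big(-\lambda u+D\lambda^2\|\ushort{\delta}(F)\|_2^2\big).
\]
Assuming $\|\ushort{\delta}(F)\|_2>0$ (otherwise $F$ is $\nu$-a.s.\ constant and \eqref{eq:expcon1} is trivial), I would then minimize the quadratic $\lambda\mapsto-\lambda u+D\lambda^2\|\ushort{\delta}(F)\|_2^2$ over $\lambda>0$: the minimizer is $\lambda^\star=u/\big(2D\|\ushort{\delta}(F)\|_2^2\big)>0$, and the minimum value is $-u^2/\big(4D\|\ushort{\delta}(F)\|_2^2\big)$, which gives \eqref{eq:expcon1}. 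For the two-sided bound \eqref{eq:expcon2}, I would apply \eqref{eq:expcon1} both to $F$ and to $-F$, noting that $\delta_x(-F)=\delta_x(F)$ (so the exponent is unchanged) and $\E_\nu[-F]=-\E_\nu[F]$, and then conclude by the union bound $\nu\{|F-\E_\nu[F]|\geq u\}\leq \nu\{F-\E_\nu[F]\geq u\}+\nu\{-(F-\E_\nu[F])\geq u\}$.

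There is essentially no genuine obstacle here: the argument is a routine Chernoff bound, and the only points deserving a moment's attention are that $\lambda F$ remains in $\Delta_2(\Omega)$ so that $\gcb{D}$ applies with the same constant $D$, the correct homogeneity of the semi-norm $\|\ushort{\delta}(\cdot)\|_2$ under scaling by $\lambda$, and the degenerate case $\|\ushort{\delta}(F)\|_2=0$.
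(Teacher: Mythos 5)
Your proposal is correct and follows essentially the same route as the paper: apply $\gcb{D}$ to $\lambda F$, use Markov's inequality, optimize over $\lambda>0$ to get \eqref{eq:expcon1}, and then apply the one-sided bound to $-F$ together with a union bound to get \eqref{eq:expcon2}. Your explicit treatment of the degenerate case $\|\ushort{\delta}(F)\|_2=0$ is a small extra care the paper omits, but otherwise the arguments coincide.
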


\begin{proof}
If $F\in\Delta_2(\Omega)$, then $\lambda F\in\Delta_2(\Omega)$ for any $\lambda\in\R_+$.
We apply Markov's inequality and \eqref{eq:gemb} to get
\begin{align*}
\gnu\left\{\omega\in \Omega : F(\omega) - \E_\nu [F] \geq u \right\} 
&\leq  \exp\left(-\lambda u\right)\ \E_\nu\big[\exp\left(\lambda(F - \E_\nu [F])\right) \big]\\
& \leq \exp\left(-\lambda u+D\|\ushort{\delta} (F) \|_2^2\, \lambda^2\right).
\end{align*}
We now optimize over $\lambda$ to get \eqref{eq:expcon1}.
Applying this inequality to $-F$ gives the same inequality if `$\geq u$' is replaced by
`$\leq -u$', whence
\begin{align*}
\MoveEqLeft \gnu\left\{\omega\in \Omega : |F(\omega) - \E_\nu [F]| \geq u \right\}\\ 
&\leq \gnu\left\{\omega\in \Omega : F(\omega) - \E_\nu [F] \geq u \right\}\!+
\gnu\left\{\omega\in \Omega : F(\omega) - \E_\nu [F] \leq -u \right\} \\
& \leq 2\ \exp\left(-\frac{u^2}{4D\|\ushort{\delta} (F) \|_2^2}\right),
\end{align*}
which is \eqref{eq:expcon2}.
\end{proof}

\subsection{Moment concentration bounds}

\begin{definition}\label{def:moments}
\leavevmode\\
Given $p\in\N$, we say that a probability measure $\nu$ on $(\Omega,\mathfrak{B})$
satisfies the moment concentration bound of order $2p$ with constant $C_{2p}=C_{2p}(\nu)>0$
(abbreviated $\mcb{2p,C_{2p}}$) if, for all functions $F\in\Delta_2(\Omega)$, we have
\begin{equation}\label{eq:momentb}
\E_\nu\left[ (F-\E_\nu [F])^{2p}\right]\leq C_{2p}\, \|\ushort{\gdelta} (F)\|^{2p}_2\,.
\end{equation}
\end{definition}

Again, as for the Gaussian concentration bound, the point is that the involved constant, namely $C_{2p}$,
is required to be independent of $F$.
An application of Markov's inequality immediately gives the following polynomial concentration inequality:
\begin{equation}\label{eq:momentdev}
\gnu\left\{\omega\in \Omega :  |F(\omega) -\E_\nu [F]| >u\right\}
\leq \frac{C_{2p}\, \| \ushort{\gdelta}(F)\|_2^{2p}}{u^{2p}}
\end{equation}
for all $u>0$.

\subsection{Gaussian tails and growth of moments}\label{rem:moments-vs-gb}

Let $Z$ be a real-valued random variable with $\E[Z]=0$. If for some positive constant $K$
\[
\E[Z^{2p}]\leq p! K^p,\;\forall p\in\N, 
\]
then $\E[e^{\lambda Z}]\leq e^{2K \lambda^2}$ for all $\lambda\in\R$.
Applied to $Z=F-\E_\nu[F]$ for a probability measure $\nu$
satisfying $\gcb{D}$ for all $p\in\N$, this gives a road to establishing that $Z$ 
satisfies a Gaussian concentration bound.

Conversely, if there exists a constant $K>0$ such that for all $u>0$
\[
\max\{\pee(Z\geq u),\pee(-Z\geq u)\}\leq \exp\left(-\frac{u^2}{2K}\right),
\] 
then for every integer $p\geq 1$,
\[
\E[Z^{2p}]\leq p!\, (4K)^p.
\]
Applied to $Z=F-\E_\nu[F]$ for a probability measure $\nu$ satisfying $\gcb{D}$, we have \eqref{eq:expcon1} and \eqref{eq:expcon2} with $K=2D\|\ushort{\delta} (F) \|_2^2$, thus we get \eqref{eq:momentb} with $C_{2p}=p! (8D)^p$.
We refer to \cite[Theorem 2.1, p. 25]{blm} for a proof of these two general statements.

\section{\textbf{Gibbs measures}}\label{sec:Gibbs}

For the sake of convenience, we briefly recall some facts about Gibbs measures which will be used later on.
We refer to \cite{Geo} for details. The largest class of potentials we consider is that of shift-invariant
``uniformly summable'' potentials.

\subsection{Potentials}

A potential is a function $\Phi:\mathcal{P}\times \Omega\to\R$. (Recall that $\mathcal{P}$ is  the collection of finite subsets of $\Zd$.)
We will assume that $\omega\mapsto \Phi(\Lambda,\omega)$ is $\mathfrak{B}_\Lambda$-measurable for every
$\Lambda\in\mathcal{P}$. Shift-invariance is the requirement that
$\Phi(\Lambda+x,T_x\omega)=\Phi(\Lambda,\omega)$ for all $\Lambda\in\mathcal{P}$, $\omega\in\Omega$
and $x\in\Zd$ (where $\Lambda+x=\{y+x: y\in\Lambda\}$). 
Uniform summability is the property that 
\begin{equation}\label{def-sumpot}
\opnorm{\Phi}:=\sum_{\substack{\Lambda\in\mathcal{P}\\ \Lambda \ni 0}}
\|\Phi(\Lambda,\cdot)\|_\infty <\infty.
\end{equation}
We shall denote by $\mathscr{B}_T$ the space of uniformly summable shift-invariant continuous potentials. 
Equipped with the norm $\opnorm{\cdot }$, it is a Banach space.

The most important subclass of uniformly summable shift-invariant potentials is the class of finite-range potentials.
A finite-range potential is such that there exists $R>0$ such that $\Phi(\Lambda,\omega)=0$ if $\textup{diam}(\Lambda)>R$.
The smallest such $R$ is called the range of the potential.
More formally, $R=R(\Phi)=\max_{\Lambda:\Phi(\Lambda,\cdot)\not\equiv 0}\textup{diam}(\Lambda)$.
Nearest-neighbor potentials correspond to the case $R=1$.
The set of potentials with finite range is dense in $\mathscr{B}_T$.

Now define the continuous function
\begin{equation}\label{def-fU}
f_\Phi(\omega)=\sum_{\Lambda\ni 0} \frac{\Phi(\Lambda,\omega)}{|\Lambda|}.
\end{equation}
The quantity $f_\Phi(\omega)$ can be interpreted as the mean energy per site in the configuration $\omega$.

\subsection{Gibbs measures}\label{sec:gibbs}

Given $\Phi\in \mathscr{B}_T$ and $\Lambda\in\mathcal{P}$, the associated Hamiltonian in the finite volume $\Lambda$ with boundary condition $\eta\in\Omega$ is given by
\[
\mathcal{H}_{\Lambda}(\omega|\eta)= \sum_{\substack{\Lambda'\in \mathcal{P}\\ \Lambda'\cap \Lambda \neq\emptyset}} \Phi(\Lambda',\omega_{\Lambda}\eta_{\Zd\backslash \Lambda})\,.
\]
The corresponding specification is then defined as
\begin{equation}\label{eq:spe}
\bgamma^{\Phi}_{\Lambda} (\omega|\eta)
=\frac{\exp\left(-\mathcal{H}_{\Lambda}(\omega|\eta)\right)}{Z_{\Lambda}(\eta)}\,
\end{equation}
where $Z_{\Lambda}(\eta)$ is the partition function in $\Lambda$ (normalizing factor).
We say that $\gmu$ is a Gibbs measure for the potential $\Phi$ if 
$\bgamma^{\Phi}_{\Lambda} (\omega|\cdot)$ is a version of the conditional probability
$\gmu(\omega_{\Lambda}| \mathfrak{B}\!_{\Lambda^c})$. 
Equivalently, this means that for all $A\in \mathfrak{B}$, $\Lambda \in \mathcal{P}$, one has the so-called
``DLR equations''
\begin{equation}\label{eq:dlr}
\mu(A)=\int \dd\mu(\eta)  \sum_{\omega'\in\Lambda} \bgamma^{\Phi}_{\Lambda} (\omega'|\eta) \, \un\!_{A}(\omega'_{\Lambda}\eta_{\Lambda^c}).
\end{equation}
A consequence of \eqref{def-sumpot} is that for all $\Lambda\supset \Lambda'$ such
that $\Lambda\in\mathcal{P}$, for all $\omega,\tilde\omega$ such that $\omega_x=\tilde\omega_x$ $\forall
x\notin \Lambda'$, we have
\[
\sup_{\eta\in\Omega} \big|\mathcal{H}_{\Lambda}(\omega|\eta)-\mathcal{H}_{\Lambda}(\tilde\omega|\eta)\big|
\leq 2 \, \sum_{A\cap \Lambda' \neq \emptyset} \| \Phi(A,\cdot)\|_\infty \leq 2\, |\Lambda'| \, \opnorm{\Phi}.
\]
As a further consequence we get
\begin{equation}\label{eq:bruit}
\frac{\bgamma^{\Phi}_{\Lambda} (\omega|\eta)}{\bgamma^{\Phi}_{\Lambda} (\tilde\omega|\eta)} \leq
\exp(2 |\Lambda'| \, \opnorm{\Phi}).
\end{equation}

The set of Gibbs measures for a given potential is never empty but it may be not reduced to a singleton. This set necessarily contains at least one Gibbs measure that is shift invariant.

Finally, let
\begin{equation}\label{PPhi}
P(\Phi)=\lim_{n\to\infty} \frac{1}{(2n+1)^d} \log Z_{C_n}(\eta^{(n)})
\end{equation}
which exists for any sequence $(\eta^{(n)})_{n\geq 1}$ and depends only on $\Phi$.
At certain places in the sequel, we will need a good control on the measure of cylinders in terms of the ergodic sum
of $f_\Phi$. To ensure this we will have to assume additionally that $\Phi$ satisfies 
\begin{equation}\label{cond-decay}
\sum_{n\geq 1} n^{d-1} \sum_{\substack{\Lambda \ni 0\\\Lambda \cap (\Zd\backslash C_n)\neq \emptyset}}
\frac{\|\Phi(\Lambda,\cdot)\|_\infty}{|\Lambda|}<\infty.
\end{equation}
This condition is obviously satisfied by any finite-range potential, but also by a class of spin pair potentials
(see below). This condition implies 
\[
\sum_{n\geq 1} n^{d-1}\, \text{var}_n(f_\Phi)<\infty
\]
where $\text{var}_n(f_\Phi):=\sup\{|f_\Phi(\omega)-f_\Phi(\omega')| : \omega_{C_n}=\omega'_{C_n}\}$. 
From \cite[Theorem 5.2.4, p. 100]{keller} it follows that there exists $C_\Phi>0$ such that for all $\omega\in\Omega$ and for all $n\in\N$, one has
\begin{equation}\label{eq:control-cyl}
e^{-C_\Phi (2n+1)^{d-1}}\leq
\frac{\gmu_\Phi(\EuScript{C}_n(\omega))}{e^{-(2n+1)^d P(\Phi)-\sum_{x\in C_n}f_\Phi(T_x \omega)}}
\leq e^{C_\Phi (2n+1)^{d-1}}.
\end{equation}
The point, which we will need later, is that, under \eqref{cond-decay}, we have {\em surface-order} terms in the exponentials on both sides.

\subsection{Entropy, relative entropy and the variational principle}

The entropy (per site) of a shift-invariant probability measure $\nu$ is defined as
\[
h(\nu)=\lim_{n\to\infty} -\frac{1}{(2n+1)^d} \sum_{\omega\in\Omega_n} \nu_n(\omega)\log\nu_n(\omega)
\;(\in [0,\log|S|])
\]
where $\nu_n$ is the probability measure induced on $\Omega_n$ by projection, {\em i.e.}, 
$\nu_n(\omega)=\nu(\EuScript{C}_n(\omega))$.

Given two probability measures $\mu$ and $\nu$ on $\Omega$, let
\[
H_n(\nu_n|\mu_n)=
\sum_{\omega\in \Omega_n} \nu_n(\omega) \log \frac{\nu_n(\omega)}{\mu_n(\omega)}.
\]
It can be proven \cite[Chapter 15]{Geo} that if $\nu$ is a shift-invariant probability measure and $\mu$ a 
Gibbs measure, we can define the relative entropy density of $\nu$ with respect to $\mu$ as
\begin{equation}\label{def:relent} 
h(\nu|\mu)=\lim_{n\to\infty} \frac{H_n(\nu_n|\mu_n)}{(2n+1)^d}.
\end{equation}
One has $h(\nu|\mu)\in [0,+\infty)$. Moreover, if $\Phi\in\mathscr{B}_T$ and $\mu_\Phi$ is a shift-invariant
Gibbs measure for $\Phi$ then
\begin{equation}\label{formule-rel-entropy}
h(\nu|\mu_\Phi)=P(\Phi)+\E_\nu[f_\Phi]-h(\nu).
\end{equation}

Finally, the variational principle (\cite[Chapter 15]{Geo}) states that
$h(\nu|\mu_\Phi)=0$ if and only if $\nu$ is a Gibbs measure for $\Phi$.
In particular, for such a $\nu$, one has
\begin{equation}\label{VP}
P(\Phi)=h(\nu)-\E_{\nu}[f_\Phi].
\end{equation}

\subsection{Examples}\label{sec:examples}

In order to make things more tangible, we will repeatedly illustrate our results with the following concrete examples.

\begin{enumerate}
\item[]\textbf{(Ising)}
A fundamental example is the (nearest-neighbor) Ising model for which we take $S=\{-1,+1\}$ and
that we define via the nearest-neighbor potential
\begin{equation}\label{def-ising}
\Phi(\Lambda,\omega)=
\begin{cases}
- h\omega_x & \textup{if}\quad \Lambda=\{x\}\\
- J \omega_x\omega_y & \textup{if}\quad \Lambda=\{x,y\}\;\textup{and}\;\|x-y\|_1=1\\
0 & \textup{otherwise}
\end{cases}
\end{equation}
where the parameters $J,h\in \R$  are respectively the coupling strength and the external magnetic field (uniform with strength $|h|$). When $J>0$, this is called the ferromagnetic case, when $J<0$ it is called the antiferromagnetic case.
We shall consider the potential $\beta\Phi$, where $\beta\in\R_+$ is the inverse temperature.
\item[]\textbf{(Long-range Ising)}
Sticking to the case $S=\{-1,+1\}$, one can define the so-called spin pair potentials that can be of infinite range.
Let $J:\Zd\to \R$ be an even function such that $J(0)=0$ and $0<\sum_{x\in\Zd} |J(x)|<+\infty$. Then define
\begin{equation}\label{def-sum-pair-pot}
\Phi(\Lambda,\omega)=
\begin{cases}
- J(x-y)\, \omega_x\omega_y & \textup{if}\quad \Lambda=\{x,y\}\\
0 & \textup{otherwise.}
\end{cases}
\end{equation}
When $J$ is positive-valued, we have a ferromagnetic spin pair potential, while when $J$ is negative-valued, we have an anti-ferromagnetic spin pair potential. 
For this class of potentials, the following facts are known \cite{ellis} in the ferromagnetic case. Let $\mathscr{J}_0:=\sum_{x\in\Zd} J(x)$ (which is finite by assumption). Then $\mathscr{J}_0^{-1}\leq \beta_c:=\sup\{\beta>0 : \E_{\mu_{\beta\Phi}}[s_0]=0\}$, where $s_0(\omega)=\omega_0$.
Moreover, if there exist two linearly independent unit vectors $z,z'$ in $\Z^d$ such that
$J(z)$ and $J(z')$ are positive, then $\beta_c$ is finite. Of course, this class contains the nearest-neighbor Ising model with zero external magnetic field.
\item[]\textbf{(The Potts antiferromagnet)}
Another example of a nearest-neighbor potential is the Potts antiferromagnet for which $S=\{1,2,\ldots,q\}$ where $q$ is an integer  greater than or equal to $2$. The elements of $S$ are traditionally viewed as `colors'. The potential is defined as
\begin{equation}
\label{def-potts}
\Phi(\Lambda,\omega)=
\begin{cases}
J \un_{\{\omega_x=\omega_y\}} & \textup{if}\quad \Lambda=\{x,y\}\;\textup{and}\;\|x-y\|_1=1\\
0 & \textup{otherwise}
\end{cases}
\end{equation}
where $J>0$ is the coupling strength. (For $q=2$, this potential is physically equivalent to the Ising potential.) One can add an external magnetic field as in the Ising model.
\end{enumerate}

\section{\textbf{Gaussian concentration bound for Gibbs measures}}\label{sec:gcbgauss}

The Gaussian concentration property holds under the Dobrushin uniqueness condition.
In view of the applications to come, we give concrete
examples of potentials satisfying this condition.

\subsection{Dobrushin uniqueness regime}\label{sec:dob}

Let $\Phi\in\mathscr{B}_T$ and $\bgamma^{\Phi}$ be the corresponding specification.
The Dobrushin uniqueness condition is based upon the matrix
\[
C_{x,y}(\bgamma^{\Phi})=
\sup_{\omega,\omega'\in\Omega: \, \omega_{\Zd\backslash y}=\, \omega'_{\Zd\backslash y}}
\| \bgamma^{\Phi}_{\{x\}}(\cdot |\omega)-\bgamma^{\Phi}_{\{x\}}(\cdot|\omega')\|_\infty.
\]
Because we consider shift-invariant potentials, $C_{x,y}(\bgamma^{\Phi})$ depends only on $x-y$.
One says that $\bgamma^{\Phi}$ satisfies the Dobrushin uniqueness condition if
\begin{equation}\label{def:dobcoef}
\mathfrak{c}(\bgamma^{\Phi}):=\sum_{x\in\Zd} C_{0,x}(\bgamma^{\Phi})<1.
\end{equation}
It is well known (see {\em e.g.} \cite[chapter 8]{Geo}) that if this condition holds, there is a unique Gibbs measure for
$\Phi$ which we denote by $\mu_\Phi$. Moreover it is automatically shift invariant.

\subsection{Examples}

The following list of examples is not exhaustive. All details can be found in \cite[Chapter 8]{Geo}.

Let $\Phi\in\mathscr{B}_T$. One has the bound
\[
\mathfrak{c}(\bgamma^{\Phi}) \leq \frac{1}{2}\sum_{\Lambda\ni 0} \, (|\Lambda|-1)\, \delta(\Phi(\Lambda,\cdot))
\]
where 
\[
\delta(\Phi(\Lambda,\cdot))=\sup_{\omega,\omega'\in\Omega} |\Phi(\Lambda,\omega)-\Phi(\Lambda,\omega')|.
\]
Hence a sufficient condition for \eqref{def:dobcoef} to hold is that
\begin{equation}\label{def-high-temp}
\sum_{\Lambda\ni 0} \, (|\Lambda|-1)\, \delta(\Phi(\Lambda,\cdot))<2.
\end{equation}

Let us come back to the examples introduced above.
As a first example, take a potential $\beta\Phi$ where $\beta>0$ and $\Phi$ is a finite-range potential.
It is obvious that \eqref{def-high-temp} holds for all $\beta$ small enough.
In this case it is customary to say that we are in the ``high-temperature regime'' of this potential.
A second scenario is when we have a sufficiently large external magnetic field. By this we mean 
that we take any potential $\Phi$ such that $\Phi(\{x\},\omega)=-h\, \omega_x$ for all $x\in\Zd$ and some
$h\in\R$. The condition implying \eqref{def:dobcoef} reads
\[
e^{|h|}> \exp\left( \frac{1}{2} \sum_{\Lambda\ni 0 : |\Lambda|>1} \delta(\Phi(\Lambda,\cdot))\right)\, \sum_{\Lambda\ni 0} \, (|\Lambda|-1)\, \delta(\Phi(\Lambda,\cdot)).
\]
A third scenario occurs at low temperatures for potentials with unique ground state, {\em e.g.}, the Ising model with $h\neq 0$ and for sufficiently large $\beta$, or any $\beta$ and $|h|$ sufficiently large.
\begin{enumerate}
\item[]\textbf{(Ising)}
For instance, in the Ising model in two dimensions, \eqref{def:dobcoef} holds if $|h|>4\beta|J|+\log(8\beta|J|)$.
Without external magnetic field ($h=0$) and with $J=1$, \eqref{def:dobcoef} holds if $\beta<\frac{1}{2}\ln(\frac{5}{3})\approx 0.255$.
\item[]\textbf{(Long-range Ising)}
For a spin pair potential $\beta\Phi$ one has
\[
\mathfrak{c}(\bgamma^{\beta\Phi}) \leq \sum_{x\in\Zd}\tanh(\beta |J(x)|),
\]
hence \eqref{def:dobcoef} holds if
\begin{equation}\label{eq:tanh}
\sum_{x\in\Zd}\tanh(\beta |J(x)|)< 1.
\end{equation}
This holds in particular if $\sum_{x\in\Zd} \beta |J(x)|\leq  1$.
\item[]\textbf{(Potts antiferromagnet)}
Potts antiferromagnet \eqref{def-potts} satisfies Dobrushin's uniqueness condition as soon as $q>4d$,
regardless of the value of $J$. Indeed, one can check that $\mathfrak{c}(\bgamma^{\Phi}) \leq \frac{2d}{q-2d}$. We refer to \cite{salassokal} for this result which improves the one described in \cite{Geo}.
Moreover, in that regime, for the unique Gibbs $\mu_\Phi$ it holds that $\E_{\mu_\Phi}[\un_{\{\omega_0=i\}}]=1/q$ for $i\in \{1,\ldots,q\}$.
\end{enumerate}

\subsection{Gaussian concentration bound}

\begin{theorem}[\cite{CCKR,Kul}]\label{thm:gemb-dob}
\leavevmode\\ 
Let $\Phi\in\mathscr{B}_T$ and assume that the associated specification $\bgamma^{\Phi}$ satisfies  
Dobrushin's uniqueness condition \eqref{def:dobcoef}.
Then $\mu_\Phi$ satifies $\gcb{\frac{1}{2(1-\mathfrak{c}(\bgamma^{\Phi}))^2}}$.
\end{theorem}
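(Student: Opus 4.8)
The plan is to establish \eqref{eq:gemb} by the martingale (Azuma--Hoeffding) method combined with Dobrushin's comparison theorem, as in \cite{Kul,CCKR}. The first reduction is to cylindrical functions. Given $F\in\Delta_2(\Omega)$, fix a reference configuration $\bar\omega$ and set $F^{(n)}(\omega)=F(\omega_{C_n}\bar\omega_{C_n^c})$. Each $F^{(n)}$ is cylindrical, $\|\ushort{\delta}(F^{(n)})\|_2\le\|\ushort{\delta}(F)\|_2$, $\|F^{(n)}\|_\infty\le\|F\|_\infty$, and $F^{(n)}\to F$ uniformly since $F$ is uniformly continuous on the compact space $(\Omega,\dist)$. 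Hence, by dominated convergence, \eqref{eq:gemb} for all cylindrical functions (with the stated constant) implies it for $F$.

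So assume $F$ is cylindrical with $\Lambda_F\subseteq C_m$. Enumerate $\Zd=\{x_1,x_2,\dots\}$ by nondecreasing $\|\cdot\|_\infty$ (ties broken arbitrarily), put $V_k=\{x_1,\dots,x_k\}$, $\mathfrak B_k=\mathfrak B_{V_k}$, and consider the Doob martingale $F_k=\E_{\mu_\Phi}[F\mid\mathfrak B_k]$. Then $F_0=\E_{\mu_\Phi}[F]$ and $F_k=F$ once $V_k\supseteq C_m$, so $F-\E_{\mu_\Phi}[F]=\sum_{k\ge1}\Delta_k$ is a finite sum with $\Delta_k=F_k-F_{k-1}$. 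Since $F_{k-1}$ is $\mathfrak B_{k-1}$-measurable and $F_k$ is $\mathfrak B_{V_k}$-measurable with $V_k=V_{k-1}\cup\{x_k\}$, conditionally on $\mathfrak B_{k-1}$ the variable $\Delta_k$ has mean zero and varies only through the coordinate $\omega_{x_k}$, hence lies in an interval of length at most $\delta_k:=\delta_{x_k}(F_k)$; the crude form of Hoeffding's lemma ($\E[e^X]\le\cosh\delta_k\le e^{\delta_k^2/2}$ for centered $X$ with $|X|\le\delta_k$) then gives $\E_{\mu_\Phi}[e^{\Delta_k}\mid\mathfrak B_{k-1}]\le e^{\delta_k^2/2}$. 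Iterating the tower property from the outside yields
\[
\E_{\mu_\Phi}\big[e^{F-\E_{\mu_\Phi}[F]}\big]=\E_{\mu_\Phi}\Big[\textstyle\prod_{k\ge1}e^{\Delta_k}\Big]\le\exp\Big(\tfrac12\textstyle\sum_{k\ge1}\delta_k^2\Big).
\]

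The heart of the matter — and the step I expect to be the main obstacle — is the bound on the oscillations $\delta_k$. Writing $F_k(\omega)=\int F(\omega_{V_k}\zeta_{V_k^c})\,\mu_\Phi(\dd\zeta\mid\mathfrak B_{V_k})(\omega)$ and using that, in the Dobrushin regime, the conditional law of $\omega_{V_k^c}$ given $\omega_{V_k}$ is the unique Gibbs measure of the specification restricted to $\Omega_{V_k^c}$ (whose Dobrushin matrix is a submatrix of $C(\bgamma^\Phi)$, hence still of norm $<1$), one compares $F_k(\omega)$ and $F_k(\omega')$ for $\omega,\omega'$ differing only at $x_k$. The integrand contributes at most $\delta_{x_k}(F)$, while the change of the conditional measure is controlled by Dobrushin's comparison theorem (\cite[Chapter 8]{Geo}); after the standard bookkeeping with $D:=\sum_{n\ge0}C(\bgamma^\Phi)^n=(I-C(\bgamma^\Phi))^{-1}$ (using $DC=D-I$ and $D_{x,x}\ge1$) this gives $\delta_k\le\sum_{j\ge1}D_{x_k,x_j}\,\delta_{x_j}(F)$. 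Now $C(\bgamma^\Phi)$ is a convolution matrix with all row and column sums equal to $\mathfrak c(\bgamma^\Phi)<1$, so $D$ has all row and column sums at most $(1-\mathfrak c(\bgamma^\Phi))^{-1}$; the Schur test then yields $\|D\|_{\ell^2\to\ell^2}\le(1-\mathfrak c(\bgamma^\Phi))^{-1}$, whence $\sum_k\delta_k^2\le(1-\mathfrak c(\bgamma^\Phi))^{-2}\,\|\ushort{\delta}(F)\|_2^2$. Substituting this into the displayed bound gives \eqref{eq:gemb} with $D(\mu_\Phi)=\tfrac{1}{2(1-\mathfrak c(\bgamma^\Phi))^2}$, which is $\gcb{\tfrac{1}{2(1-\mathfrak c(\bgamma^\Phi))^2}}$. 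An alternative route for this key step, closer to \cite{Kul,CCKR}, is to stay in finite volume throughout — estimating directly the oscillation of the finite-volume kernels $\bgamma^\Phi_{W}(\cdot\mid\cdot)$ under a one-site change of boundary condition via the Dobrushin contraction, and then passing to the limit — which avoids having to interpret conditional specifications in the infinite volume $V_k^c$.
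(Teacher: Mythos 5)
Your proof is correct and is essentially the argument the paper relies on: the paper does not itself prove Theorem \ref{thm:gemb-dob} but cites \cite{Kul,CCKR} for local functions and supplies only Lemma \ref{extension} for the passage to $\Delta_2(\Omega)$, and your core argument — Doob martingale along an enumeration of $\Zd$, Hoeffding's lemma on each increment, Dobrushin's comparison theorem to bound $\delta_{x_k}(F_k)$ by a $D$-weighted sum of the $\delta_{x_j}(F)$ with $D=(I-C(\bgamma^{\Phi}))^{-1}$, and the Schur test giving $\|D\|_{\ell^2\to\ell^2}\le(1-\mathfrak{c}(\bgamma^{\Phi}))^{-1}$ — is precisely the proof of \cite{Kul}, and it does yield the stated constant $\frac{1}{2(1-\mathfrak{c}(\bgamma^{\Phi}))^2}$. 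Your reduction to cylindrical functions is a slightly streamlined variant of Lemma \ref{extension} (you use $\|\ushort{\delta}(F^{(n)})\|_2\le\|\ushort{\delta}(F)\|_2$ directly instead of proving $\|\ushort{\delta}(F-F_n)\|_2\to 0$), and is equally valid.
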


Take for instance a spin pair potential satisfying \eqref{eq:tanh}. Then, \eqref{eq:expcon2}
gives
\begin{align*}
\MoveEqLeft[6] \gmu_{\beta\Phi}\left\{\omega\in \Omega : |F(\omega) - \E_{\mu_{\beta\Phi}}[F]| \geq u \right\} \\
& \leq
2\exp\left(-\frac{\big(1-\sum_{x\in\Zd}\tanh(\beta |J(x)|)\big)^2u^2}{2\|\ushort{\delta} (F) \|_2^2}\right)
\end{align*}
for all functions $F\in\Delta_2(\Omega)$ and for all $u>0$. Observe that when $\beta$ goes to $0$, 
$\gmu_{\beta\Phi}$ goes (in weak topology) to a product measure (namely the product of the 
measures giving equal mass to each element of $S$), and one gets
$-\frac{u^2}{2\|\ushort{\delta} (F) \|_2^2}$ in the exponential. 

\begin{remark}\label{kulkul}
Theorem \ref{thm:gemb-dob} was first proved in \cite{Kul} in a more general setting (in particular, without assuming that potentials are shift invariant). 
Using a different approach, this theorem was also proved in \cite[Section 3.1]{CCKR} for shift-invariant potentials, although it was not explicitly stated therein. In particular, the constant is not explicit. Moreover, it was proved for local functions.
But it is not difficult to show that, if $\gcb{D}$ holds for all local functions, then it holds for all functions in $\Delta_2(\Omega)$ with the same constant $D$, as shown in the lemma below.
\end{remark}

\begin{lemma}\label{extension}
If \eqref{eq:gemb} holds with constant $D$, then it holds for 
all $F\in\Delta_2(\Omega)$ with the same constant $D$.
If \eqref{eq:momentb} holds for some $p\geq 1$ with a constant
$C_{2p}$, then it extends to this class of functions, with the same constant.
\end{lemma}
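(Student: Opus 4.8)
The plan is to approximate an arbitrary $F\in\Delta_2(\Omega)$ by a sequence of local functions and pass to the limit in both inequalities \eqref{eq:gemb} and \eqref{eq:momentb}, using the fact that the constants $D$ and $C_{2p}$ do not depend on the function. The natural approximants are the conditional expectations $F_n:=\E_\nu[F\,|\,\mathfrak{B}_{C_n}]$ (equivalently, averages of $F$ over the coordinates outside $C_n$ with respect to $\nu$). These are $\mathfrak{B}_{C_n}$-measurable, hence local (cylindrical), and by the martingale convergence theorem $F_n\to F$ both $\nu$-almost surely and in $L^q(\nu)$ for every $q\geq 1$ (here $F$ is bounded since it is continuous on the compact space $\Omega$, so all moments are finite).

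The key estimate is that conditioning does not increase the $\ell^2$-oscillation seminorm: $\gdelta_x(F_n)\leq \gdelta_x(F)$ for every $x$, and in fact $\gdelta_x(F_n)=0$ for $x\notin C_n$, so that $\|\ushort{\gdelta}(F_n)\|_2\leq\|\ushort{\gdelta}(F)\|_2$. To see this, fix $x\in C_n$ and two configurations $\omega,\omega'$ differing only at $x$; then $F_n(\omega)-F_n(\omega')$ is an average over $\Omega_{C_n^c}$ (with a common conditional law, since the conditioning $\sigma$-algebra only sees $C_n$ and $x\in C_n$) of differences $F(\omega_{C_n}\zeta_{C_n^c})-F(\omega'_{C_n}\zeta_{C_n^c})$, each of which is bounded in absolute value by $\gdelta_x(F)$; hence the average is too. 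One must be slightly careful that the conditional measure $\nu(\cdot\,|\,\mathfrak{B}_{C_n})$ used to define $F_n(\omega)$ and $F_n(\omega')$ is the same — it is, because it depends only on $\omega_{C_n}=\omega'_{C_n}$.

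With this in hand the argument is routine. For \eqref{eq:gemb}: since $F_n$ is local, by hypothesis $\E_\nu[\exp(F_n-\E_\nu[F_n])]\leq\exp(D\|\ushort{\gdelta}(F_n)\|_2^2)\leq\exp(D\|\ushort{\gdelta}(F)\|_2^2)$. Now $F_n\to F$ in $L^1(\nu)$ gives $\E_\nu[F_n]\to\E_\nu[F]$, and $\|F_n\|_\infty\leq\|F\|_\infty$ uniformly in $n$, so $\exp(F_n-\E_\nu[F_n])$ is uniformly bounded and converges $\nu$-a.s. to $\exp(F-\E_\nu[F])$; dominated convergence yields \eqref{eq:gemb} for $F$ with the same constant $D$. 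The moment bound \eqref{eq:momentb} is handled identically: $\E_\nu[(F_n-\E_\nu[F_n])^{2p}]\leq C_{2p}\|\ushort{\gdelta}(F_n)\|_2^{2p}\leq C_{2p}\|\ushort{\gdelta}(F)\|_2^{2p}$, and the left side converges to $\E_\nu[(F-\E_\nu[F])^{2p}]$ by dominated convergence (again using the uniform bound $\|F_n\|_\infty\leq\|F\|_\infty$).

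The only mildly delicate point — the ``main obstacle'', such as it is — is the oscillation estimate $\gdelta_x(F_n)\leq\gdelta_x(F)$, and more precisely making sure the conditional expectation representing $F_n$ is set up so that flipping the spin at $x\in C_n$ really does leave the conditioning untouched; everything else is a standard martingale/dominated-convergence wrap-up. If one prefers to avoid conditional expectations relative to $\nu$ (which carries mild measurability bookkeeping), an alternative is to fix a reference configuration $\bar\omega$ and set $F_n(\omega):=F(\omega_{C_n}\bar\omega_{C_n^c})$; this is manifestly local with $\gdelta_x(F_n)\leq\gdelta_x(F)$ for $x\in C_n$ and $\gdelta_x(F_n)=0$ otherwise, and $F_n\to F$ uniformly by continuity of $F$ on the compact $\Omega$ — which makes the passage to the limit in both \eqref{eq:gemb} and \eqref{eq:momentb} even more transparent, since uniform convergence immediately controls both $\E_\nu[F_n]$ and the integrands.
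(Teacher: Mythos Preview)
Your \emph{primary} approach via conditional expectations $F_n=\E_\nu[F\mid\mathfrak{B}_{C_n}]$ contains a genuine gap at exactly the point you flag as delicate. You assert that for $x\in C_n$ and $\omega,\omega'$ differing only at $x$, the conditional law used to define $F_n(\omega)$ and $F_n(\omega')$ is the same ``because it depends only on $\omega_{C_n}=\omega'_{C_n}$''. But $\omega_{C_n}\neq\omega'_{C_n}$: they differ precisely at $x$. The regular conditional distribution of the outside coordinates given $\mathfrak{B}_{C_n}$ is a kernel $\kappa(\omega_{C_n},\dd\zeta)$ that \emph{does} depend on the value $\omega_x$, so
\[
F_n(\omega)-F_n(\omega')=\int F(\omega_{C_n},\zeta)\,\kappa(\omega_{C_n},\dd\zeta)-\int F(\omega'_{C_n},\zeta)\,\kappa(\omega'_{C_n},\dd\zeta)
\]
involves two different measures, and you cannot bound it by $\gdelta_x(F)$ without further input. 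For a general (non-product) $\nu$ this step simply fails; one would need a quantitative control on how $\kappa(\cdot,\dd\zeta)$ varies with a single coordinate, which is precisely the kind of dependence structure the lemma is trying to avoid assuming.

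Your \emph{alternative} approach, however --- fixing a reference configuration $\bar\omega$ and setting $F_n(\omega)=F(\omega_{C_n}\bar\omega_{C_n^c})$ --- is correct and is exactly what the paper does. There the bound $\gdelta_x(F_n)\le\gdelta_x(F)$ for $x\in C_n$ (and $=0$ for $x\notin C_n$) is immediate, and uniform convergence $\|F-F_n\|_\infty\to 0$ follows from continuity of $F$ on the compact $\Omega$. The paper in fact proves the slightly stronger intermediate statement $\|\ushort{\gdelta}(F-F_n)\|_2\to 0$ (via dominated convergence for the sum), but your simpler route --- using only $\|\ushort{\gdelta}(F_n)\|_2\le\|\ushort{\gdelta}(F)\|_2$ together with uniform convergence and dominated convergence in the integrals --- is enough. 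So: drop the conditional-expectation argument and promote your ``alternative'' to the main proof.
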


\begin{proof}
We treat the case of the Gaussian concentration bound. The case of moment bounds is very similar.
Let $F:\Omega\to\R$ be a continuous function such that $\|\ushort{\gdelta}(F)\|_2<+\infty$.
Since $\Omega$ is compact, $F$ is bounded, thus
$\E_\nu[\exp(F)]<+\infty$. We now construct a sequence of local functions $(F_n)_n$
defined in the natural way: We fix once for all $\eta\in\Omega$ and for each $n\geq 1$ we let
\[
F_n(\omega)=F(\omega_{C_n}\eta_{\Zd\backslash C_n}),
\]
that obviously coincides with $F$ inside the cube $C_n$. We now prove that 
$\|\ushort{\gdelta}(F-F_n)\|_2\to 0$ as $n\to+\infty$. We first prove that, for each $x\in\Zd$, $\delta_x(F_n-F)\xrightarrow[]{n\to\infty}0$.
Since $x$ is fixed and $n$ gets arbitrarily large, we can assume that $x\in C_n$. We have
\[
\delta_x(F_n-F)=\sup\{|F_n(\omega)-F(\omega')| : \omega_y=\omega'_y,\, \forall y\neq x\}.
\]
By compactness, there exists two configurations
$\omega=\omega_{C_n\backslash\{x\}}s_x\omega_{C_n^c}$ and 
$\omega'=\omega_{C_n\backslash\{x\}}s'_x\omega_{C_n^c}$ such that this supremum
is attained. (The notation should be clear: given
$\omega\in\Omega$,  $\omega_{C_n\backslash\{x\}}s_x\omega_{C_n^c}$ is the configuration 
coinciding with $\omega$ except at site $x\in C_n$ where $\omega_x$ is replaced by $s\in S$ at
site $x$.)
Therefore
\begin{align*}
& \delta_x(F_n-F)\leq 
|F(\omega_{C_n\backslash\{x\}}s_x\eta_{C_n^c})-F(\omega_{C_n\backslash\{x\}}s_x\omega_{C_n^c})| \\
& \qquad\qquad\qquad + |F(\omega_{C_n\backslash\{x\}}s'_x\omega_{C_n^c})-F(\omega_{C_n\backslash\{x\}}s'_x\eta_{C_n^c})|.
\end{align*}
By continuity, the two terms go to zero as $n$ goes to infinity. Then we obviously have that $(\delta_x(F_n-F))^2\leq 4 (\delta_x(F))^2$. Since $\sum_{x\in \Zd} (\delta_x(F))^2<\infty$, we can apply the dominated convergence theorem for sums to get the desired conclusion.

Now  \eqref{eq:gemb} follows for $F$ with the same constant, because $\|F-F_n\|_\infty\to 0$ and 
\begin{align*}
\MoveEqLeft  \E_\nu\big[\exp\left(F - \E_\nu[F]\right) \big] \\
& \leq \E_\nu\big[\exp\left(F_n - \E_\nu[F_n]\right) \big] \, \exp\left(2\|F-F_n\|_\infty\right)  \\
& \leq \exp\left(D \|\ushort{\delta}(F_n) \|_2^2\right)\, \exp\left(2\|F-F_n\|_\infty\right) \\
& \leq \exp\left(D \|\ushort{\delta}(F) \|_2^2\right)  \\
&  \;\quad \times \exp\left(2\|F-F_n\|_\infty + 2\|\ushort{\delta}(F)\|_2\|\ushort{\delta}(F_n-F)\|_2 +
\|\ushort{\delta}(F_n-F)\|_2^2\right).
\end{align*}
This result now follows by taking the limit $n\to \infty$ in the right-hand side.
\end{proof}

\section{\textbf{Concentration bounds for the Ising ferromagnet at low temperature}}\label{sec:lowtempconc}

\subsection{The Ising ferromagnet}

We consider the low-temperature plus-phase of the Ising model on $\Z^d$, $d\geq 2$, corresponding
to the potential \eqref{def-ising} with $h=0$, $J>0$ (ferromagnetic case) and the boundary condition
$\eta_x=+1$ for all $x\in\Zd$.
Without loss of generality, we can take $J=1$.
This is the probability measure $\gmu^+_\beta$ on $\Omega$ defined as the weak limit as
$\Lambda\uparrow\Z^d$ of the finite-volume measures 
\begin{equation}\label{rat}
\gmu^+_{\Lambda,\beta}(\omega_\Lambda) =\frac{1}{Z_{\Lambda,\beta}(+^{\Zd})}\
\exp\Big(-\beta \mathcal{H}_{\Lambda}\big(\omega\, |+^{\Zd}\big)\Big)
\end{equation}
where 
\[
\mathcal{H}_{\Lambda}\big(\omega|+^{\Zd}\big)=
-\sum_{\substack{x,y\in \Lambda\\ \|x-y\|_1=1}} \omega_x\omega_y - \sum_{\substack{x\in\partial
\Lambda, \ y\notin \Lambda\\ \|x-y\|_1=1}}\omega_x
\]
and where $\beta\in\R^+$. We write $+^{\Zd}$ for the configuration $\eta$ such that
$\eta_x=+1$ for all $x\in\Zd$, and $\partial \Lambda$ denotes the inner boundary of the set $\Lambda$, {\em i.e.}, the set of those $x\in \Lambda$ having at least one neighbor $y\notin \Lambda$.
The existence of the limit $\Lambda\uparrow \Z^d$ of $\gmu_{\Lambda,\beta}^+$ is by a standard and well-known monotonicity argument, see {\em e.g.} \cite{Geo}. 
In a similar fashion one can define $\gmu_\beta^-$. 
Both $\gmu_\beta^+$ and $\gmu_\beta^-$ are shift-invariant and ergodic.
It is well known that there exists $\beta_c>0$ such that for all $\beta>\beta_c$, $\gmu_\beta^+\neq\gmu_\beta^-$.

\subsection{Moment concentration bounds of all orders}

It should not be a surprise that, for the Ising model in the phase coexistence region, a Gaussian concentration bound cannot hold. Indeed, this would contradict the surface-order large deviations for the magnetization in that regime (see below for more details). Nevertheless, one can control all moments, as was shown in \cite{CCKR}.

\begin{theorem}[\cite{CCKR}]\label{thm:allmomentsIsing}
\leavevmode\\
Let $\gmu_\beta^+$ be the plus phase of the low-temperature Ising model defined above. There exists
$\bar{\beta}>\beta_c$, such that for each $\beta>\bar{\beta}$, there exists a positive sequence
$(C_{2p}(\beta))_{p\in\N}$ such that the measure $\gmu_\beta^+$ satifies
$\mcb{2p,C_{2p}(\beta)}$ for all $p\in\N$. In particular one has for each $p\in\N$
\[
\gmu_\beta^+\left\{\omega\in \Omega :  |F(\omega)-\E_{\mu_\beta^+} [F]|\geq u\right\} \leq 
\frac{C_{2p}(\beta)\, \| \ushort{\gdelta}(F)\|_2^{2p}}{u^{2p}}
\]
for all functions $F\in\Delta_2(\Omega)$ and for all $u>0$.
\end{theorem}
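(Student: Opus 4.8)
The plan is to reduce the moment concentration bound for the low-temperature plus phase to a martingale-difference estimate, exploiting the one essential feature of the plus phase at very low $\beta$: configurations are overwhelmingly ``sea of $+$ with sparse, small islands of $-$''. Because the bound \eqref{eq:momentb} need only be established for local functions (then extended to all of $\Delta_2(\Omega)$ by Lemma \ref{extension}), it suffices to fix a local $F$ with $\E_{\mu_\beta^+}[F]=0$ and estimate $\E_{\mu_\beta^+}[F^{2p}]$. I would enumerate $\Zd$ as $x_1,x_2,\dots$ and write $F=\sum_{i\geq 1}\Delta_i$ where $\Delta_i=\E[F\mid \mathcal{F}_i]-\E[F\mid\mathcal{F}_{i-1}]$ with $\mathcal{F}_i=\sigma(\omega_{x_1},\dots,\omega_{x_i})$; this is a finite sum since $F$ is local. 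The Burkholder inequality (or the elementary Marcinkiewicz--Zygmund-type bound used in \cite{CCKR}) then gives
\[
\E_{\mu_\beta^+}[F^{2p}]\leq c_p\, \E_{\mu_\beta^+}\Big[\Big(\sum_{i}\Delta_i^2\Big)^{p}\Big],
\]
so everything is reduced to controlling $\Delta_i$ pointwise and then summing.

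The heart of the matter is the estimate $|\Delta_i(\omega)|\leq \delta_{x_i}(F)\cdot W_i(\omega)$, where $W_i(\omega)$ is a random ``weight'' measuring how strongly the conditional expectation of $F$ at stage $i$ responds to the value at $x_i$; concretely one writes $\Delta_i$ as an integral, against the conditional specification, of differences of $F$ obtained by varying coordinates, and uses $\gdelta_y(F)$ together with the coupling/cluster structure of $\mu_\beta^+$ to pass the effect of changing $\omega_{x_i}$ out to other sites $y$. This produces a bound of the form $W_i(\omega)\leq \sum_{y}\Gamma_{x_i,y}(\omega)\,\mathbf{1}[\text{something}]$, where $\Gamma$ decays because at low temperature the influence of a single spin on a far-away conditional expectation is governed by the probability of a long $-$-cluster/contour connecting them, which is exponentially small in $\beta$ times its length. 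The point of the low-temperature hypothesis $\beta>\bar\beta$ is precisely to make the cluster-expansion / Peierls-type bounds on these influence coefficients summable, so that $\sum_{y}\E[\Gamma_{x_i,y}^{q}]$ is finite for every $q$; this is where the constants $C_{2p}(\beta)$ (which are allowed to blow up as $p\to\infty$ and as $\beta\downarrow\bar\beta$) come from, and I would simply quote the relevant contour estimates from \cite{CCKR} rather than reprove them.

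With the pointwise weight bound in hand, I would take $L^p$-norms: by Minkowski's inequality in $L^p(\mu_\beta^+)$,
\[
\Big\|\sum_i \Delta_i^2\Big\|_{p}\leq \sum_i \delta_{x_i}(F)^2\,\big\|W_i^2\big\|_{p}\leq \sup_i \|W_i^2\|_p\,\cdot\,\sum_i\delta_{x_i}(F)^2 = \sup_i\|W_i^2\|_p\cdot\|\ushort{\gdelta}(F)\|_2^2,
\]
where $\sup_i\|W_i^2\|_p\leq K_p(\beta)<\infty$ uniformly in $i$ by shift-invariance of $\mu_\beta^+$ and the summable-influence estimate. Plugging back yields $\E_{\mu_\beta^+}[F^{2p}]\leq c_p K_p(\beta)^p\,\|\ushort{\gdelta}(F)\|_2^{2p}$, i.e. \eqref{eq:momentb} with $C_{2p}(\beta)=c_p K_p(\beta)^p$, first for local $F$ and then, by Lemma \ref{extension}, for all $F\in\Delta_2(\Omega)$. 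The displayed polynomial deviation inequality is then immediate from Markov's inequality as in \eqref{eq:momentdev}.

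**The main obstacle** is the uniform-in-$i$ control of the influence weights $\|W_i\|_p$ in every $L^p$: one must show that the cumulative effect on $\E[F\mid\mathcal F_i]$ of resampling a single spin, measured through the $\gdelta_y(F)$'s, has moments of all orders bounded independently of the enumeration and of $i$. At very low temperature this is exactly the content of the convergent contour/cluster expansion for the plus phase, and the delicate point is that a Gaussian-type exponential moment of $W_i$ is \emph{not} available in the phase-coexistence regime (that would contradict surface-order large deviations, as noted in the text) — only polynomial moments of every fixed order are, with constants degrading in $p$. I would therefore lean on the precise quantitative estimates already established in \cite{CCKR} for these weights and present the martingale/Burkholder argument above as the mechanism that converts them into \eqref{eq:momentb}.
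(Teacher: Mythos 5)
First, a point of comparison: the paper does not actually prove this theorem --- it is quoted from \cite{CCKR}, and the only argument supplied here is the extension from local functions to $\Delta_2(\Omega)$ via Lemma \ref{extension} together with the Markov-inequality step \eqref{eq:momentdev}. Your proposal correctly identifies both of those ingredients, and your reconstruction of the \cite{CCKR} argument has the right architecture: a martingale decomposition $F=\sum_i\Delta_i$ along an enumeration of $\Zd$, Burkholder's inequality, control of the increments through the coupling of \cite{MRSvM}, and the observation that only polynomial moments (with constants $C_{2p}$ growing like $p^{2p}K^p$, cf.\ Remark \ref{rem-moments-lowT}) can survive in the phase-coexistence regime. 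Since you defer the quantitative contour estimates to \cite{CCKR}, the proposal is at the same level of rigour as the paper's own treatment on that front.

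There is, however, a genuine error in your key intermediate step. The bound $|\Delta_i(\omega)|\leq \gdelta_{x_i}(F)\,W_i(\omega)$ is false as stated: taking $F=\omega_{x_j}$ for a site $x_j$ not yet revealed and neighbouring $x_i$ gives $\gdelta_{x_i}(F)=0$ while $\Delta_i=\E[\omega_{x_j}\mid\mathcal F_i]-\E[\omega_{x_j}\mid\mathcal F_{i-1}]\neq 0$ for any non-product measure. The correct estimate, obtained by coupling the two conditional measures that differ in the value at $x_i$, is of the form
\begin{equation*}
|\Delta_i|\;\leq\;\sum_{j\geq i}\gdelta_{x_j}(F)\,\Gamma_{ij},
\end{equation*}
with the oscillation evaluated at the \emph{influenced} sites $x_j$ and $\Gamma_{ij}$ a (random, in the low-temperature case) disagreement probability. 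This changes how the factor $\|\ushort{\gdelta}(F)\|_2^{2}$ is extracted: instead of your termwise Minkowski bound $\sum_i\gdelta_{x_i}(F)^2\|W_i^2\|_p$, one must bound $\sum_i\Delta_i^2\leq\opnorm{\Gamma}^2_{\ell^2\to\ell^2}\,\|\ushort{\gdelta}(F)\|_2^2$ and then control the moments of the random operator norm $\opnorm{\Gamma}_{\ell^2\to\ell^2}$ (via Schur's test and the Peierls/contour estimates); this is precisely where the $p$-dependence of $C_{2p}(\beta)$ arises in \cite{CCKR}. Your final displayed inequality happens to have the correct form, but the route you give to it does not go through.
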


\begin{remark}\label{rem-moments-lowT}
In view of Subsection \ref{rem:moments-vs-gb}, one can ask whether the previous theorem implies in fact a stronger statement, namely a Gaussian concentration bound. The answer turns out to be negative. Indeed, looking at the proof of Theorem 3 in \cite{CCKR}, one sees that $C_{2p}$ is of the form $p^{2p} K^p$ for some constant $K>0$ (depending on $F$ but independent of $p$). Therefore, one cannot infer a Gaussian bound from these moment bounds. 
\end{remark}

\subsection{Stretched-exponential concentration bound}

One can deduce from the previous theorem that the measure $\gmu_\beta^+$
satisfies a `stretched-exponen\-tial' concentration bound. This was shown in \cite{CCKR}.
In order to state it, we need some notations and definitions.
For $0<\varrho<1$, let $M\!_\varrho:\R\to\R^+$ be the Young function defined by
$M\!_\varrho(x)=e^{(|x|+h_\varrho)^\varrho}-e^{h_\varrho^\varrho}$ where $h_\varrho=(\frac{1-\varrho}{\varrho})^{1/\varrho}$. Then, the Luxemburg norm with respect to $M_\varrho$ of a real-valued random variable $Z$ is defined by
\[
\|Z\|_{M\!_\varrho}=\inf\left\{ \lambda>0 : \E\left[M\!_\varrho\left(\frac{Z}{\lambda}\right)\right]\leq 1\right\}.
\]
(Note that the choice $M_p(x)=|x|^p$ would give the usual $L^p$ norm.)

\begin{theorem}[\cite{CCKR}]\label{thm-seb}
\leavevmode\\
Let $\gmu_\beta^+$ be the plus-phase of the low-temperature Ising model and $\bar{\beta}$ as in the previous theorem.
Then, for each $\beta>\bar{\beta}$, there exist $\varrho=\varrho(\beta)\in(0,1)$ and a constant $K_{\!\varrho}>0$ such that, for all functions $F\in\Delta_2(\Omega)$, one has
\begin{equation}\label{eq:lux-ising}
\|F-\E_{\mu_\beta^+}[F]\|_{M_\varrho}\leq K_{\!\varrho}\, \|\ushort{\gdelta}(F)\|_2,
\end{equation}
Moreover there exists $c_\varrho>0$ such that for all $u>0$
\begin{equation}\label{eq:devineqsubexp}
\gmu_\beta^+\left\{\omega\in \Omega :  |F(\omega)-\E_{\mu_\beta^+} [F]|\geq u\right\} \leq
4\, \exp\left(-\frac{c_\varrho\, u^{\varrho}}{\|\ushort{\delta}(F)\|_2^\varrho}\right).
\end{equation}
\end{theorem}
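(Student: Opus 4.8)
The plan is to deduce both inequalities from the moment bounds of Theorem~\ref{thm:allmomentsIsing}, using the explicit growth $C_{2p}(\gmu_\beta^+)=p^{2p}K^p$ recorded in Remark~\ref{rem-moments-lowT} (with $K=K(\beta)>0$). Fix $\beta>\bar\beta$, write $\nu=\gmu_\beta^+$, and for $F\in\Delta_2(\Omega)$ set $Z=F-\E_\nu[F]$. The first step is pure bookkeeping: Theorem~\ref{thm:allmomentsIsing} gives $\E_\nu[Z^{2p}]\le p^{2p}K^p\,\|\ushort{\gdelta}(F)\|_2^{2p}$ for all $p$, hence $\|Z\|_{L^{2p}(\nu)}\le p\sqrt{K}\,\|\ushort{\gdelta}(F)\|_2$; since $\nu$ is a probability measure, $q\mapsto\|Z\|_{L^q(\nu)}$ is nondecreasing, so bounding an odd exponent by the next even one gives $\|Z\|_{L^n(\nu)}\le n\sqrt{K}\,\|\ushort{\gdelta}(F)\|_2$ for every integer $n\ge1$. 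This moment-norm estimate is the only input from the Ising model. (If $\|\ushort{\gdelta}(F)\|_2=0$ then $Z\equiv0$ $\nu$-a.s.\ and both claims are trivial, so assume $\|\ushort{\gdelta}(F)\|_2>0$.)

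Next I would fix an arbitrary $\varrho\in(0,1)$ and prove the Luxemburg estimate \eqref{eq:lux-ising}; since any such $\varrho$ works, this proves the theorem. The convexifying shift $h_\varrho$ in the Young function is peeled off by subadditivity of $t\mapsto t^\varrho$ on $\R^+$: from $(|x|+h_\varrho)^\varrho\le|x|^\varrho+h_\varrho^\varrho$ one gets $M_\varrho(x)\le e^{h_\varrho^\varrho}\big(e^{|x|^\varrho}-1\big)=e^{h_\varrho^\varrho}\sum_{k\ge1}|x|^{\varrho k}/k!$. Evaluating at $x=Z/\lambda$ and using Lyapunov's inequality $\E_\nu[|Z|^{\varrho k}]\le\big(\E_\nu[|Z|^{\lceil\varrho k\rceil}]\big)^{\varrho k/\lceil\varrho k\rceil}\le\big(\lceil\varrho k\rceil\sqrt{K}\,\|\ushort{\gdelta}(F)\|_2\big)^{\varrho k}$ together with $\lceil\varrho k\rceil\le2k$ yields $\E_\nu[M_\varrho(Z/\lambda)]\le e^{h_\varrho^\varrho}\sum_{k\ge1}(2ck)^{\varrho k}/k!$ with $c=\sqrt{K}\,\|\ushort{\gdelta}(F)\|_2/\lambda$. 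Here $\varrho<1$ enters decisively: since $k!\ge(k/e)^k$, one has $(2ck)^{\varrho k}/k!\le\big((2c)^\varrho e\,k^{-(1-\varrho)}\big)^k$, a sequence that is summable for every $c>0$ and, by dominated convergence, tends to $0$ as $c\downarrow0$. Hence there is $c_0(\varrho)>0$ such that $c\le c_0(\varrho)$ forces $\E_\nu[M_\varrho(Z/\lambda)]\le1$; taking $\lambda=K_\varrho\|\ushort{\gdelta}(F)\|_2$ with $K_\varrho=\sqrt{K}/c_0(\varrho)$ gives $\|Z\|_{M_\varrho}\le K_\varrho\,\|\ushort{\gdelta}(F)\|_2$, which is \eqref{eq:lux-ising}.

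The deviation bound \eqref{eq:devineqsubexp} then follows by Markov's inequality for the Young function $M_\varrho$. With $\Lambda=K_\varrho\|\ushort{\gdelta}(F)\|_2$ one has $\E_\nu[M_\varrho(Z/\Lambda)]\le1$, and since $M_\varrho$ is even and increasing on $\R^+$, $\nu\{|Z|\ge u\}\le\E_\nu[M_\varrho(Z/\Lambda)]/M_\varrho(u/\Lambda)\le1/M_\varrho(u/\Lambda)$ for every $u>0$. Using $M_\varrho(u/\Lambda)\ge e^{(u/\Lambda)^\varrho}-e^{h_\varrho^\varrho}$ gives $\nu\{|Z|\ge u\}\le2\exp\big(-(u/\Lambda)^\varrho\big)$ once $e^{(u/\Lambda)^\varrho}\ge2e^{h_\varrho^\varrho}$, while on the complementary bounded range of $u$ the right-hand side of \eqref{eq:devineqsubexp} is $\ge1$ provided its constant $c_\varrho$ is chosen small enough; merging the two ranges yields \eqref{eq:devineqsubexp} with the constant $4$ and $c_\varrho$ a suitably small multiple of $K_\varrho^{-\varrho}$.

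I expect the only real work to be the middle step — turning purely polynomial moment growth into control of the Luxemburg norm of $M_\varrho$, whose shift $h_\varrho$ and non-integer powers $\varrho k$ obstruct a naive power-series comparison. The two devices that make it go through are the subadditivity $(s+t)^\varrho\le s^\varrho+t^\varrho$ (to remove the shift) and Lyapunov's inequality (to reduce fractional moments to the integer moments supplied by Theorem~\ref{thm:allmomentsIsing}); and the summability of $\sum_k(ck)^{\varrho k}/k!$ for every $c>0$, which is exactly where $\varrho<1$ is used and which is what makes $\|Z\|_{M_\varrho}$ comparable to $\|\ushort{\gdelta}(F)\|_2$ rather than merely finite.
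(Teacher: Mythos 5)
The paper does not actually prove Theorem \ref{thm-seb} --- it only cites \cite{CCKR} --- but it does frame the statement as deducible from the moment bounds of Theorem \ref{thm:allmomentsIsing}, and that is precisely the route you take. Granting the inputs you use (namely $\mcb{2p,C_{2p}}$ for all $p$ with $C_{2p}=p^{2p}K^p$, as recorded in Remark \ref{rem-moments-lowT}), your argument is correct: the reduction $M_\varrho(x)\le e^{h_\varrho^\varrho}(e^{|x|^\varrho}-1)$ via subadditivity of $t\mapsto t^\varrho$, the Lyapunov step to handle fractional moments, the summability of $\sum_k(2ck)^{\varrho k}/k!$, and the two-range Markov argument for \eqref{eq:devineqsubexp} all check out. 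Two observations. First, the middle step is already packaged in the paper: inequality \eqref{LpOrlicz} converts $\|Z\|_{L^q}\le q\sqrt{K}\,\|\ushort{\gdelta}(F)\|_2$ directly into $\|Z\|_{M_\varrho}\le B_\varrho\sqrt{K}\,\big(\sup_{q\in 2\N}q^{1-1/\varrho}\big)\|\ushort{\gdelta}(F)\|_2$, so the power-series work, while correct, is not needed. Second, and worth noticing, your argument proves strictly more than the theorem asserts: with $C_{2p}=p^{2p}K^p$ you get \eqref{eq:lux-ising} for \emph{every} $\varrho\in(0,1)$, and your own summability estimate would even go through at $\varrho=1$ for $c$ small (since $(2ck)^k/k!\le(2ce)^k$), i.e.\ genuinely exponential concentration. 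Since \cite{CCKR} only yields a $\beta$-dependent $\varrho<1$, this indicates that the clean form $p^{2p}K^p$ with $K$ uniform in $p$ and $F$ overstates what the coupling construction actually delivers (note also the parenthetical ``depending on $F$'' in Remark \ref{rem-moments-lowT}, which is incompatible both with Definition \ref{def:moments} and with your conclusion that $K_\varrho$ is independent of $F$). Your proof is therefore internally sound but inherits this imprecision; with the true, faster-growing constants the restriction to a specific $\varrho(\beta)<1$ becomes essential in your summability step rather than cosmetic.
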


All the constants appearing in the previous statement may depend on $d$.

Theorems \ref{thm:allmomentsIsing} and \ref{thm-seb} were proved in \cite{CCKR} for local functions,
but Lemma \ref{extension} shows that their extension to functions in
$\Delta_2(\Omega)$ is ensured.

\begin{remark}
For any random variable $Z$ and for any $0<\varrho <1$, there is a real number $B_\varrho>1$ such that, if $\|Z\|_{M_\varrho}<\infty$, then
\begin{equation}\label{LpOrlicz}
B_\varrho^{-1} \sup_{q\in 2\N} \frac{\|Z\|_{L^q}}{q^{1/\varrho}}
\leq 
\|Z\|_{M_\varrho}
\leq
B_\varrho \sup_{q\in 2\N} \frac{\|Z\|_{L^q}}{q^{1/\varrho}}.
\end{equation}
These estimates are proved in \cite[p. 86]{machkouri} where the suprema are taken over all
the integers greater than $2$. Restricting the supremum to even integers gives the same inequalities with slightly different constants.
\end{remark}

\begin{remark}
An essential ingredient in the proofs of Theorems \ref{thm:allmomentsIsing} and \ref{thm-seb} is 
a non-trvial coupling constructed in \cite{MRSvM}. In fact, this construction was made for Markov random fields for which the Pirogov-Sinai theory applies, such as the low-temperature pure phases of the ferro- and anti-ferromagnetic Potts model. For the sake of simplicity, only the ferromagnetic Ising model was considered in
\cite{CCKR}. Therefore we also restrict ourselves to this case in the present work.
\end{remark}

\section{Application 1: Ergodic sums and empirical pair correlations}\label{sec:app1}

\subsection{General results}

Given a nonempty finite subset $\Lambda$ of $\Zd$ ({\em i.e.}, $\emptyset\neq\Lambda\in\mathcal{P}$), a continuous function $f:\Omega\to\R$ and $\omega\in\Omega$, define
\[
S_\Lambda f(\omega)=\sum_{x\in\Lambda} f(T_x \omega).
\]
A sequence $(\Lambda_n)_n$ of nonempty finite subsets of $\Zd$ is said to tend to infinity in the sense of van Hove if, for each $x\in \Zd$, one has
\[
\lim_{n\to+\infty} |\Lambda_n|=+\infty\quad\text{and}\quad\lim_{n\to+\infty} \frac{|(\Lambda_n+x)\backslash \Lambda_n|}{|\Lambda_n|}=0.
\]
In the language of countable discrete amenable groups, $(\Lambda_n)_n$ is a F\o lner sequence.
A special case of interest is when $\Lambda_n=C_n$:
\[
S_n f(\omega):=\sum_{x\in C_n} f(T_x\omega), \, n\in\N.
\]
By convention we set $S_0 f(\omega)=f(\omega)$.
Given an ergodic measure $\nu$, we are interested in the fluctuations of
\[
\frac{S_\Lambda f(\omega)}{|\Lambda|}.
\]
When one considers 
\[
\frac{S_{\Lambda_n} f(\omega)}{|\Lambda_n|}
\]
where $(\Lambda_n)_n$ tends to infinity in the sense of van Hove, it is well-known that this average
converges $\nu$-almost surely to $\E_\nu[f]$ as $n\to+\infty$. This is the so-called multidimensional ergodic theorem, see {\em e.g.} \cite{tempelman}. 

We first state a simple lemma that will be repeatedly used in this
section and later.

\begin{lemma}\label{lem:deltasums}
Let $f\in \Delta_1(\Omega)$ and $\Lambda\in\mathcal{P}$. Then 
\[
\| \ushort{\gdelta}(S_\Lambda f)\|_2^2\leq |\Lambda | \, \| \ushort{\gdelta}(f)\|_1^2.
\]
\end{lemma}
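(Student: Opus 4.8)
The plan is to bound the oscillation array of $S_\Lambda f$ by a convolution of the oscillation array of $f$ with the indicator of $-\Lambda$, and then apply Young's inequality on $\Zd$. Throughout, set $a_z:=\gdelta_z(f)\ge 0$ for $z\in\Zd$; the hypothesis $f\in\Delta_1(\Omega)$ is exactly $a\in\ell^1(\Zd)$ with $\|a\|_{\ell^1}=\|\ushort{\gdelta}(f)\|_1$, and all the sums below have nonnegative terms, so no convergence issue arises. Note also that $S_\Lambda f$ is continuous, being a finite sum of functions $f\circ T_x$.

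First I would record how the shift acts on oscillations: for fixed $x\in\Zd$ the function $\omega\mapsto f(T_x\omega)$ has oscillation at a site $z$ equal to $\gdelta_{z+x}(f)$, since altering a configuration at $z$ alters $T_x\omega$ exactly at the site $z+x$ and nowhere else. Combining this with the elementary subadditivity $\gdelta_z(G_1+\cdots+G_k)\le \gdelta_z(G_1)+\cdots+\gdelta_z(G_k)$ (immediate from the triangle inequality) gives, for every $z\in\Zd$,
\[
0\ \le\ \gdelta_z(S_\Lambda f)\ \le\ \sum_{x\in\Lambda}\gdelta_z\big(f\circ T_x\big)\ =\ \sum_{x\in\Lambda}a_{z+x}\ =\ \big(a*\un_{-\Lambda}\big)(z),
\]
where $\un_{-\Lambda}$ is the indicator of $-\Lambda=\{-x:x\in\Lambda\}$ and $*$ denotes convolution on $\Zd$. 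Squaring and summing over $z$ yields $\|\ushort{\gdelta}(S_\Lambda f)\|_2^2\le \|a*\un_{-\Lambda}\|_{\ell^2(\Zd)}^2$.

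It then remains to invoke Young's convolution inequality $\|a*b\|_{\ell^2}\le \|a\|_{\ell^1}\|b\|_{\ell^2}$ with $b=\un_{-\Lambda}$, for which $\|b\|_{\ell^2}^2=|\Lambda|$; this gives exactly $\|\ushort{\gdelta}(S_\Lambda f)\|_2^2\le |\Lambda|\,\|\ushort{\gdelta}(f)\|_1^2$. If one prefers a self-contained argument, the same estimate follows by writing $\|a*\un_{-\Lambda}\|_{\ell^2}^2=\sum_{x,x'\in\Lambda}\rho(x-x')$ with $\rho(y):=\sum_{w\in\Zd}a_{w+y}a_w\ge 0$, then using $\sum_{y\in\Zd}\rho(y)=\|a\|_{\ell^1}^2$ together with the fact that each $y\in\Zd$ is realized as $x-x'$ by at most $|\Lambda|$ pairs $(x,x')\in\Lambda\times\Lambda$. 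There is no real obstacle in this lemma; the one point to watch is that the naive estimates are lossy — a direct Cauchy--Schwarz on the sum over $\Lambda$, or the triangle inequality in $\ell^2$, only yields $|\Lambda|^2\|\ushort{\gdelta}(f)\|_2^2$, which can be worse by a factor $|\Lambda|$ than the asserted bound (sharp already when $f$ depends on a single coordinate). Exploiting that the $f\circ T_x$, $x\in\Lambda$, are translates of one summable array is precisely what upgrades the $\ell^2\times\ell^2$ Cauchy--Schwarz to the gainful $\ell^1\times\ell^2\to\ell^2$ Young inequality.
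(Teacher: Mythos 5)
Your proof is correct and follows essentially the same route as the paper: bound $\gdelta_z(S_\Lambda f)$ by the convolution of the oscillation array of $f$ with the indicator of (a reflection of) $\Lambda$, then apply Young's inequality with exponents $\ell^1\times\ell^2\to\ell^2$, where $\|\un_\Lambda\|_{\ell^2}^2=|\Lambda|$. The self-contained verification of Young's inequality via the autocorrelation $\rho$ is a nice optional addition but does not change the argument.
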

\begin{proof}
We observe that $\delta_z(S_\Lambda f)\leq \sum_{x\in \Lambda}\delta_{z-x}(f)$.
We now use Young's inequality:
if $\ushort{\boldsymbol{u}}=(\boldsymbol{u}_x)_{x\in\Zd}\in \ell^p(\Zd)$ and
$\ushort{\boldsymbol{v}}=(\boldsymbol{v}_x)_{x\in\Zd}\in\ell^q(\Zd)$, where $p,q\geq 1$, then $\ushort{\boldsymbol{u}}* \ushort{\boldsymbol{v}}\in\ell^r(\Zd)$
where $r\geq 1$ is such that $1+r^{-1}=p^{-1}+q^{-1}$, and
\[
\|\ushort{\boldsymbol{u}}* \ushort{\boldsymbol{v}}\|_{\ell^r(\Zd)}\leq 
\|\ushort{\boldsymbol{u}}\|_{\ell^p(\Zd)}\|\ushort{\boldsymbol{v}}\|_{\ell^q(\Zd)}.
\]
We apply this inequality with $r=2, p=2,q=1$, $\boldsymbol{u}_x=\un_{\Lambda}(x)$,
and $\boldsymbol{v}_x=\delta_x(f)$ to get the desired estimate.
\end{proof}

We get immediately the following general result.

\begin{theorem}\label{thm:ergo}
\leavevmode\\
Let $\nu$ be a shift-invariant probability measure satisfying $\gcb{D}$.
Then for all $\Lambda\in\mathcal{P}$ and for all $f\in\Delta_1(\Omega)$ we have, for all $u>0$,
\begin{equation}\label{eq:conc-ergo}
\nu\left\{ \omega\in\Omega : \left| \frac{S_\Lambda f(\omega)}{|\Lambda|} - \E_\nu[f]\right|\geq u\right\}
\leq 2\, \exp\left(-\frac{|\Lambda|\, u^2}{4D\, \| \ushort{\delta}(f)\|_1^2}\right)\,.
\end{equation}
\end{theorem}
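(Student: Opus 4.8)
The plan is to recognize Theorem \ref{thm:ergo} as an immediate corollary of the Gaussian concentration inequality \eqref{eq:expcon2} from Proposition \ref{prop-exp-dev}, applied to the ergodic sum $F = S_\Lambda f$, combined with the oscillation estimate of Lemma \ref{lem:deltasums}. The only preliminary point is to verify that $S_\Lambda f$ lies in $\Delta_2(\Omega)$ so that the proposition applies: since $f \in \Delta_1(\Omega) \subset C^0(\Omega)$, the function $S_\Lambda f = \sum_{x\in\Lambda} f\circ T_x$ is a finite sum of continuous functions, hence continuous, and Lemma \ref{lem:deltasums} gives $\|\ushort{\delta}(S_\Lambda f)\|_2^2 \leq |\Lambda|\,\|\ushort{\delta}(f)\|_1^2 < \infty$.

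Next I would use the shift-invariance of $\nu$ to compute $\E_\nu[S_\Lambda f] = \sum_{x\in\Lambda}\E_\nu[f\circ T_x] = |\Lambda|\,\E_\nu[f]$, so that the event appearing in \eqref{eq:conc-ergo} can be rewritten as $\{\omega : |S_\Lambda f(\omega) - \E_\nu[S_\Lambda f]| \geq |\Lambda|\,u\}$. Applying \eqref{eq:expcon2} to the function $S_\Lambda f \in \Delta_2(\Omega)$ with deviation level $|\Lambda|\,u$ yields the upper bound $2\exp\!\big(-|\Lambda|^2 u^2 / (4D\,\|\ushort{\delta}(S_\Lambda f)\|_2^2)\big)$.

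Finally I would insert the bound $\|\ushort{\delta}(S_\Lambda f)\|_2^2 \leq |\Lambda|\,\|\ushort{\delta}(f)\|_1^2$ from Lemma \ref{lem:deltasums} into the denominator of the exponent; enlarging the denominator only makes the exponential bound larger, so $-|\Lambda|^2 u^2 / (4D\,\|\ushort{\delta}(S_\Lambda f)\|_2^2) \leq -|\Lambda| u^2 / (4D\,\|\ushort{\delta}(f)\|_1^2)$, which is exactly the exponent in \eqref{eq:conc-ergo}.

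There is no genuine obstacle here: the argument is a direct chaining of Proposition \ref{prop-exp-dev} and Lemma \ref{lem:deltasums}. The two small points requiring attention are the membership $S_\Lambda f \in \Delta_2(\Omega)$ (needed to invoke the concentration inequality at all) and checking the direction of the substitution of the Lemma \ref{lem:deltasums} bound, namely that it replaces the denominator by something \emph{larger}, so the inequality is preserved.
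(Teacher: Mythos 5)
Your proposal is correct and is exactly the argument the paper intends: the theorem is stated immediately after Lemma \ref{lem:deltasums} as its direct consequence, obtained by applying \eqref{eq:expcon2} to $S_\Lambda f$ at deviation level $|\Lambda|u$ (using shift-invariance of $\nu$ for the expectation) and inserting the bound $\|\ushort{\delta}(S_\Lambda f)\|_2^2 \leq |\Lambda|\,\|\ushort{\delta}(f)\|_1^2$. Your checks of the membership $S_\Lambda f\in\Delta_2(\Omega)$ and of the direction of the substitution are both sound.
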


\medskip

Two functions are of particular interest in the context of Gibbs measures:
\begin{itemize}
\item[(a)]
Magnetization: For $S=\{-1,+1\}$ let $f=s_0$ where $s_0(\omega)=\omega_0$. Then,
for a given $\Lambda\in\mathcal{P}$, define
\[
M_\Lambda(\omega)=\sum_{x\in \Lambda}s_0(T_x \omega)\,,
\]
which is the empirical (total) magnetization in $\Lambda$. We have
\[
\|\ushort{\delta}(s_0)\|_1=2.
\]
\item[(b)]
Mean energy per site:  Take $f=f_\Phi$ where $\Phi\in\mathscr{B}_T$. From \eqref{def-fU} we get
\[
\delta_x(f_\Phi)\leq 2\, \sum_{\substack{\Lambda\ni 0\\ \Lambda\ni x }} \frac{\|\Phi(\Lambda,\cdot)\|_\infty}{|\Lambda|}
\]  
As a consequence we have
\[
\|\ushort{\delta}(f_\Phi)\|_1\leq 2\, \opnorm{\Phi}.
\]
\end{itemize}

\subsection{Empirical magnetization and energy in Dobrushin's uniqueness regime}

Applying Theorem \ref{thm:ergo} to the previous two functions gives the following two results.

\begin{theorem}\label{thm:two-high-results}
\leavevmode\\
Let $\bgamma^{\Phi}$ be the specification of a potential $\Phi\in\mathscr{B}_T$ satisfying 
Dobrushin's uniqueness condition \eqref{def:dobcoef}. Then, for all $\Lambda\in\mathcal{P}$, 
we have
\begin{itemize}
\item[\textup{(a)}] the concentration bound
\[
\gmu_\Phi\left\{\omega\in \Omega : \left|\frac{M_\Lambda(\omega)}{|\Lambda|} -  \E_{\mu_\Phi}[s_0]\right| \geq u  \right\}
\leq 2 \exp\left(-c\, |\Lambda|\, u^2\right)
\] 
for all $u>0$, where
\[
c=\frac{ (1-\mathfrak{c}(\bgamma^{\Phi}))^2}{8},
\]
\item[\textup{(b)}]
and for all $\Psi\in \mathscr{B}_T$, the concentration bound
\[
\gmu_\Phi\left\{\omega\in \Omega :
\left|\frac{S_\Lambda f_\Psi(\omega)}{|\Lambda |} - \E_{\mu_\Phi}[f_\Psi]\right| \geq u  \right\}\\
\leq 2 \exp\left(-c\, |\Lambda|\, u^2\right)
\]
for all $u>0$, where
\[
c=\frac{ (1-\mathfrak{c}(\bgamma^{\Phi}))^2}{8\, \opnorm{\Psi}^2}.
\]
\end{itemize}
\end{theorem}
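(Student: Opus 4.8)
The plan is to obtain both parts directly from Theorem~\ref{thm:ergo} by specializing the function $f$ and computing the relevant $\ell^1$-norm of its oscillation array. Since $\Phi$ satisfies Dobrushin's uniqueness condition, Theorem~\ref{thm:gemb-dob} tells us that the unique Gibbs measure $\mu_\Phi$ satisfies $\gcb{D}$ with $D=\frac{1}{2(1-\mathfrak{c}(\bgamma^{\Phi}))^2}$, and $\mu_\Phi$ is shift-invariant, so the hypotheses of Theorem~\ref{thm:ergo} are met.

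For part~(a), I would take $f=s_0$, where $s_0(\omega)=\omega_0$. Then $\delta_x(s_0)=0$ for $x\neq 0$ and $\delta_0(s_0)=2$ (flipping the spin at the origin changes the value by at most, and in fact exactly, $2$ since $S=\{-1,+1\}$), so $\|\ushort{\delta}(s_0)\|_1=2$, as already recorded in item~(a) preceding the statement. Plugging $f=s_0$ and this value into \eqref{eq:conc-ergo} gives the bound $2\exp\!\big(-\frac{|\Lambda|u^2}{4D\cdot 4}\big)=2\exp\!\big(-\frac{|\Lambda|u^2}{16D}\big)$; substituting $D=\frac{1}{2(1-\mathfrak{c}(\bgamma^{\Phi}))^2}$ yields exponent $-\frac{(1-\mathfrak{c}(\bgamma^{\Phi}))^2}{8}|\Lambda|u^2$, which is precisely the claimed constant $c$. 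Note $M_\Lambda = S_\Lambda s_0$ by definition.

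For part~(b), I would take $f=f_\Psi$ for the auxiliary potential $\Psi\in\mathscr{B}_T$ (this is legitimate even though the underlying measure is $\mu_\Phi$, since Theorem~\ref{thm:ergo} applies to any $f\in\Delta_1(\Omega)$ and $f_\Psi\in\Delta_1(\Omega)$ because $\|\ushort{\delta}(f_\Psi)\|_1\leq 2\opnorm{\Psi}<\infty$ by the bound in item~(b) above). Here $S_\Lambda f_\Psi$ is exactly the quantity in the statement. Inserting $\|\ushort{\delta}(f_\Psi)\|_1\leq 2\opnorm{\Psi}$ into \eqref{eq:conc-ergo} gives the bound $2\exp\!\big(-\frac{|\Lambda|u^2}{4D\cdot 4\opnorm{\Psi}^2}\big)$, and substituting the value of $D$ produces the exponent $-\frac{(1-\mathfrak{c}(\bgamma^{\Phi}))^2}{8\opnorm{\Psi}^2}|\Lambda|u^2$, matching the stated $c$. (Since the oscillation bound on $f_\Psi$ is an inequality, the true exponent is at least this large, so the stated bound holds.)

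There is essentially no obstacle here: the result is a direct corollary, and the only real work is the bookkeeping of constants — checking that $4D\cdot(\text{norm})^2$ combines correctly — together with the two elementary oscillation computations $\|\ushort{\delta}(s_0)\|_1=2$ and $\|\ushort{\delta}(f_\Psi)\|_1\leq 2\opnorm{\Psi}$, both of which are already stated just before the theorem. The one small point worth a sentence of care is that in part~(b) the potential $\Psi$ governing the observable need not be the same as the potential $\Phi$ governing the measure, but this causes no difficulty since Theorem~\ref{thm:ergo} is a statement about an arbitrary $f\in\Delta_1(\Omega)$ against a measure satisfying $\gcb{D}$.
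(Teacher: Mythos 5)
Your proof is correct and is exactly the paper's route: the paper presents this theorem as an immediate consequence of Theorem \ref{thm:ergo} applied to $f=s_0$ and $f=f_\Psi$, using the oscillation bounds $\|\ushort{\delta}(s_0)\|_1=2$ and $\|\ushort{\delta}(f_\Psi)\|_1\leq 2\opnorm{\Psi}$ together with the Dobrushin constant $D=\frac{1}{2(1-\mathfrak{c}(\bgamma^{\Phi}))^2}$ from Theorem \ref{thm:gemb-dob}. Your constant bookkeeping checks out in both parts.
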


We refer back to Section \ref{sec:examples} (which contains our three main examples) if the reader wants to make the previous bounds even more explicit.

\subsection{Empirical magnetization and energy in the low-temperature Ising model}

For the plus-phase of the low-temperature Ising model we can apply Theorem
\ref{thm-seb} to obtain the following analogue of Theorem \ref{thm:two-high-results}.

\begin{theorem}[] \label{thm:two-low-results}
\leavevmode\\
Let $\gmu_\beta^+$ be the plus phase of the low-temperature Ising model. 
Then there exists  $\bar{\beta}>\beta_c$ such that, for each $\beta>\bar{\beta}$, there exist $\varrho=\varrho(\beta)\in(0,1)$ and a constant $c_\varrho>0$ such that, for all $\Lambda\in\mathcal{P}$,  we have
\begin{itemize}
\item[\textup{(a)}] the concentration bound
\[
\gmu_\beta^+\left\{\omega\in \Omega :  \left|\frac{M_\Lambda(\omega)}{|\Lambda|}  - 
\E_{\mu_\beta^+}[s_0]\right|\geq u\right\} \leq
4\, \exp\left(-\frac{c_\varrho}{2^\varrho} |\Lambda|^{\frac{\varrho}{2}} u^{\varrho}\right),
\]
for all $u>0$,
\item[\textup{(b)}] and, for all $\Psi\in \mathscr{B}_T$, the concentration bound
\[
\gmu_\beta^+\left\{\omega\in \Omega :  \left|\frac{S_\Lambda f_\Psi(\omega)}{|\Lambda|}  - 
\E_{\mu_\beta^+}[f_\Psi]\right|\geq u\right\} \leq
4\, \exp\left(-\frac{c_\varrho}{(2\opnorm{\Psi})^\varrho} |\Lambda|^{\frac{\varrho}{2}} u^{\varrho}\right),
\]
for all $u>0$
\end{itemize}
\end{theorem}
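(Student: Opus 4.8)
\textbf{Proof plan for Theorem \ref{thm:two-low-results}.}

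The plan is to mimic the derivation of Theorem \ref{thm:two-high-results} but feed the two relevant functions into the stretched-exponential bound \eqref{eq:devineqsubexp} of Theorem \ref{thm-seb} instead of the Gaussian bound. First I would fix $\beta>\bar\beta$, so that Theorem \ref{thm-seb} applies and yields $\varrho=\varrho(\beta)\in(0,1)$ and $c_\varrho>0$ with the property that, for every $F\in\Delta_2(\Omega)$ and every $v>0$,
\[
\gmu_\beta^+\big\{\omega : |F(\omega)-\E_{\mu_\beta^+}[F]|\geq v\big\}
\leq 4\exp\!\left(-\frac{c_\varrho\, v^{\varrho}}{\|\ushort{\delta}(F)\|_2^{\varrho}}\right).
\]

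Next I would specialize $F=S_\Lambda f$ for $f\in\Delta_1(\Omega)$, and control $\|\ushort{\delta}(S_\Lambda f)\|_2$ via Lemma \ref{lem:deltasums}, which gives $\|\ushort{\delta}(S_\Lambda f)\|_2\leq |\Lambda|^{1/2}\|\ushort{\delta}(f)\|_1$. Substituting this into the displayed bound and taking $v=|\Lambda|\,u$ (so that the event becomes $\{|S_\Lambda f/|\Lambda|-\E_{\mu_\beta^+}[f]|\geq u\}$), the exponent becomes
\[
-\frac{c_\varrho\,(|\Lambda|u)^{\varrho}}{\big(|\Lambda|^{1/2}\|\ushort{\delta}(f)\|_1\big)^{\varrho}}
= -\frac{c_\varrho}{\|\ushort{\delta}(f)\|_1^{\varrho}}\,|\Lambda|^{\varrho/2}\,u^{\varrho}.
\]
For part (a), $f=s_0$ with $\|\ushort{\delta}(s_0)\|_1=2$, which produces the constant $c_\varrho/2^\varrho$. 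For part (b), $f=f_\Psi$ with $\|\ushort{\delta}(f_\Psi)\|_1\leq 2\opnorm{\Psi}$, which produces $c_\varrho/(2\opnorm{\Psi})^\varrho$; here one uses that replacing $\|\ushort{\delta}(f)\|_1$ by the larger quantity $2\opnorm{\Psi}$ only weakens the exponent, so the inequality still holds. Both cases require the trivial remark that $s_0, f_\Psi\in\Delta_1(\Omega)\subset\Delta_2(\Omega)$, hence $S_\Lambda f\in\Delta_2(\Omega)$, so Theorem \ref{thm-seb} is applicable; this is where one invokes Lemma \ref{extension} implicitly, since Theorem \ref{thm-seb} was originally stated for local functions but extends to $\Delta_2(\Omega)$.

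This argument is essentially bookkeeping: there is no genuine obstacle, and the only point demanding a moment's care is the scaling step, namely checking that the choice $v=|\Lambda|u$ turns the normalized deviation event into the unnormalized one and tracking how the powers of $|\Lambda|$ combine ($|\Lambda|^{\varrho}$ from the numerator against $|\Lambda|^{\varrho/2}$ from the denominator, leaving $|\Lambda|^{\varrho/2}$). The factor $4$ in front is inherited verbatim from \eqref{eq:devineqsubexp}. Everything else is a direct transcription of the high-temperature proof with the Gaussian tail replaced by the stretched-exponential one.
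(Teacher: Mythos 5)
Your proposal is correct and is exactly the argument the paper intends (the paper gives no written proof, merely noting that one applies Theorem \ref{thm-seb} to obtain the analogue of Theorem \ref{thm:two-high-results}): apply \eqref{eq:devineqsubexp} to $S_\Lambda f$, bound $\|\ushort{\delta}(S_\Lambda f)\|_2$ via Lemma \ref{lem:deltasums}, and rescale $v=|\Lambda|u$, with the monotonicity remark justifying the replacement of $\|\ushort{\delta}(f_\Psi)\|_1$ by $2\opnorm{\Psi}$. The scaling bookkeeping ($|\Lambda|^{\varrho}/|\Lambda|^{\varrho/2}=|\Lambda|^{\varrho/2}$) and the constants $c_\varrho/2^\varrho$ and $c_\varrho/(2\opnorm{\Psi})^\varrho$ all check out.
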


It is known that when $d=1$, one has $\E_{\mu_\beta^+}[s_0]=0$, whereas for $d=2$ 
(see {\em e.g.} \cite{CoyWu}) one has 
\[
\E_{\mu_\beta^+}[s_0]=\left(1-\big(\sinh(2\beta)\big)^{-4}\right)^{\frac{1}{8}}
\]
for all $\beta\geq \beta_c=\frac{1}{2}\ln(1+\sqrt{2})$. When $d\geq 3$ no explicit formula is known.

\begin{remark}\label{rem-ioffe}
\leavevmode\\
Probabilities of large deviations for the magnetization are well known for the Ising model.
At low temperature, one has ``surface-order'' large deviations, see \cite{schonmann} for instance. 
In particular one has the following estimate. Let $a,b$ such that
$-m_{\mu_\beta^+}<a<b<m_{\mu_\beta^+}$.
Then, the probability (under $\gmu_\beta^+$) that $M_n$ falls into $[a,b]$
is exponentially small in $(2n+1)^{d-1}$, as $n$ goes to infinity.
Comparing with Theorem \ref{thm:two-low-results}, we see that we get a weaker result  (since $\varrho d/2<d-1$ for all $d\geq 2$) which, however, is valid in any finite volume. Moreover, we get a bound not only for cubes but for all finite volumes.
\end{remark}

\subsection{Empirical pair correlations}

Let $f\in C^0(\Omega)$. For $\omega\in\Omega$, $x\in \Zd$ and $n\in\N$, define
\[
\Gamma_{n,x}(\omega)=
\frac{1}{(2n+1)^d} \sum_{y\in C_n} f(T_y\omega)f(T_{y+x}\omega).
\]
It follows from the multidimensional ergodic theorem (see, {\em e.g.}, \cite[p. 302]{Geo}) that, given an ergodic probability measure $\nu$, for each $x\in\Zd$,
\[
\Gamma_{n,x}(\omega)\xrightarrow[]{n\to+\infty}\E_\nu[ f\cdot f\circ T_x]
\]
for $\nu$-almost every $\omega$. Notice that $\E[\Gamma_{n,x}]=\E_\nu[ f\cdot f\circ T_x]$ for all
$n\in\N$ and for all $x\in\Zd$. We state a lemma whose proof follows the lines of Lemma \ref{lem:deltasums}.

\begin{lemma}\label{lem-correl}
Let $f\in\Delta_1(\Omega)$, $x\in\Zd$ and $n\in\N$. We have
\[
\Big\| \ushort{\gdelta}\Big(\sum_{y\in C_n} f\circ T_y (\cdot)f\circ T_{y+x}(\cdot)\Big)\Big\|_2^2
\leq 2(2n+1)^d \, \|f\|_\infty^2\, \|\ushort{\delta}(f)\|_1^2.
\]
\end{lemma}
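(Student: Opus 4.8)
The plan is to mimic the proof of Lemma \ref{lem:deltasums}, the only new feature being that we are now differentiating a product of two shifted copies of $f$ rather than a single shifted copy. Set $G=\sum_{y\in C_n}(f\circ T_y)\,(f\circ T_{y+x})$. First I would record the elementary ``product rule'' for the oscillation at a site: for $g,h\in C^0(\Omega)$ and any $z\in\Zd$,
\[
\delta_z(gh)\;\le\;\|g\|_\infty\,\delta_z(h)+\|h\|_\infty\,\delta_z(g),
\]
which follows at once from $g(\omega)h(\omega)-g(\omega')h(\omega')=g(\omega)\bigl(h(\omega)-h(\omega')\bigr)+h(\omega')\bigl(g(\omega)-g(\omega')\bigr)$ when $\omega,\omega'$ differ only at $z$. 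Applying this with $g=f\circ T_y$ and $h=f\circ T_{y+x}$, and using $\|f\circ T_y\|_\infty=\|f\|_\infty$ together with the translation covariance $\delta_z(f\circ T_y)=\delta_{z-y}(f)$ already used in Lemma \ref{lem:deltasums}, I obtain the pointwise estimate
\[
\delta_z(G)\;\le\;\sum_{y\in C_n}\delta_z\bigl((f\circ T_y)(f\circ T_{y+x})\bigr)\;\le\;\|f\|_\infty\sum_{y\in C_n}\bigl(\delta_{z-y}(f)+\delta_{z-y-x}(f)\bigr).
\]

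Next I would recognise the right-hand side as a sum of two translates of a discrete convolution: with $a=\ushort{\delta}(f)\in\ell^1(\Zd)$ and $b=\un_{C_n}$, the bound reads $\delta_z(G)\le\|f\|_\infty\bigl((b*a)(z)+(b*a)(z-x)\bigr)$. Taking $\ell^2(\Zd)$ norms in $z$, using the triangle inequality and translation invariance of $\|\cdot\|_{\ell^2(\Zd)}$, and then Young's inequality exactly as in Lemma \ref{lem:deltasums} (with $r=2$, $p=2$, $q=1$, so that $\|b*a\|_2\le\|b\|_2\,\|a\|_1$), together with $\|b\|_2=(2n+1)^{d/2}$, yields
\[
\|\ushort{\delta}(G)\|_2\;\le\;\|f\|_\infty\cdot C\cdot(2n+1)^{d/2}\,\|\ushort{\delta}(f)\|_1
\]
for an absolute constant $C$. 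Squaring gives an inequality of the asserted form $\|\ushort{\delta}(G)\|_2^2\le \mathrm{const}\cdot(2n+1)^d\,\|f\|_\infty^2\,\|\ushort{\delta}(f)\|_1^2$, which is the claim.

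I do not expect any genuine obstacle here; the argument is essentially a transcription of Lemma \ref{lem:deltasums}. The only points that require a little care are: (i) when one factor of the product is varied at site $z$, the other factor must be controlled in supremum norm, which is exactly the source of the $\|f\|_\infty^2$ in the statement (so one genuinely needs $f$ bounded, but this is automatic since $\Omega$ is compact and $f\in C^0(\Omega)$); and (ii) the bookkeeping of the two shifted convolutions $(b*a)(z)$ and $(b*a)(z-x)$, which one combines via the triangle inequality in $\ell^2$ so that the dependence on $x$ drops out of the final bound.
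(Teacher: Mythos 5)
Your argument is correct and is essentially the paper's own proof: the same product-rule decomposition of the oscillation of $(f\circ T_y)(f\circ T_{y+x})$, the same translation covariance of $\delta_z$ under shifts, and the same application of Young's inequality with $r=2$, $p=2$, $q=1$ as in Lemma \ref{lem:deltasums}. The only difference is cosmetic: you leave the final constant unspecified, and combining the two shifted convolutions by the triangle inequality in $\ell^2$ gives the prefactor $4$ rather than the stated $2$ --- but that is exactly what the paper's own computation yields as well, so nothing is missing from your reasoning.
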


\begin{proof}
For any $z\in\Zd$ we have
\begin{align*}
& \delta_{z}\left(\sum_{y\in C_n} f\circ T_y (\cdot)f\circ T_{y+x}(\cdot)\right)\\
&= \sup_{\substack{\omega,\omega' \\ \textup{differing only at}\, z}}
\left| 
\sum_{y\in C_n} \big( f\circ T_y (\omega)- f\circ T_y (\omega')\big)f\circ T_{y+x}(\omega)\right. \\
& \left. +
\sum_{y\in C_n} \big( f\circ T_{y+x} (\omega)- f\circ T_{y+x} (\omega')\big)f\circ T_{y}(\omega')
\right| \\
& \leq \|f\|_\infty \, \sum_{y\in C_n} \delta_z \big(f\circ T_y\big)
+  \|f\|_\infty \, \sum_{y\in C_n} \delta_z \big(f\circ T_{y+x}\big)\,.
\end{align*}
The finish the proof we use Young's inequality as in the proof of Lemma \ref{lem:deltasums}.
\end{proof}

We have the following results.

\begin{theorem}\label{prop:magnet}
\leavevmode\\
Let $\bgamma^{\Phi}$ be the specification of a potential $\Phi\in\mathscr{B}_T$ satisfying
Dobrushin's uniqueness condition \eqref{def:dobcoef}. Let $f\in\Delta_1(\Omega)$. Then
\begin{align*}
\MoveEqLeft[4] \gmu_\Phi\left\{
\omega\in \Omega : \left|\frac{\Gamma_{n,x}(\omega)}{(2n+1)^d} -  
\E_{\mu_\Phi}[ f\cdot f\circ T_x]
\right| \geq u  \right\}\\
& \leq 2 \exp\left(-\frac{ (1-\mathfrak{c}(\bgamma^{\Phi}))^2}{4 \|f\|_\infty^2 \|\ushort{\delta}(f)\|_1^2}\,
(2n+1)^d\, u^2\right)
\end{align*}
for all $u>0$, for all $n\in\N$ and for all $x\in\Zd$.
\end{theorem}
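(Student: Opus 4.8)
The plan is to deduce the statement from the Gaussian concentration bound of Theorem~\ref{thm:gemb-dob} applied to the \emph{unnormalized} empirical pair-correlation sum, with Lemma~\ref{lem-correl} supplying the control on its oscillation array. Fix $n\in\N$ and $x\in\Zd$ and put
\[
G_{n,x}(\omega)=\sum_{y\in C_n} f(T_y\omega)\,f(T_{y+x}\omega),
\]
so that $\Gamma_{n,x}=(2n+1)^{-d}G_{n,x}$ and, by shift-invariance of $\mu_\Phi$, $\E_{\mu_\Phi}[G_{n,x}]=(2n+1)^d\,\E_{\mu_\Phi}[f\cdot f\circ T_x]$.

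First I would verify that $G_{n,x}\in\Delta_2(\Omega)$: it is a finite sum of products of continuous functions, hence continuous, and by Lemma~\ref{lem-correl} (which uses $f\in\Delta_1(\Omega)$) one has
\[
\|\ushort{\delta}(G_{n,x})\|_2^2\leq 2(2n+1)^d\,\|f\|_\infty^2\,\|\ushort{\delta}(f)\|_1^2<\infty.
\]
Since $\bgamma^{\Phi}$ lies in Dobrushin's uniqueness regime, Theorem~\ref{thm:gemb-dob} gives that $\mu_\Phi$ satisfies $\gcb{D}$ with $D=\frac{1}{2(1-\mathfrak{c}(\bgamma^{\Phi}))^2}$; hence the two-sided inequality \eqref{eq:expcon2} of Proposition~\ref{prop-exp-dev} applies to $G_{n,x}$, and inserting the last displayed bound yields, for every $v>0$,
\[
\mu_\Phi\big\{|G_{n,x}-\E_{\mu_\Phi}[G_{n,x}]|\geq v\big\}
\leq 2\exp\!\left(-\frac{(1-\mathfrak{c}(\bgamma^{\Phi}))^2\,v^2}{4(2n+1)^d\,\|f\|_\infty^2\,\|\ushort{\delta}(f)\|_1^2}\right).
\]

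Finally I would rescale. Dividing the event by $(2n+1)^d$ and choosing $v=(2n+1)^d u$ turns the probability into $\mu_\Phi\{|\Gamma_{n,x}-\E_{\mu_\Phi}[f\cdot f\circ T_x]|\geq u\}$ and the exponent into $-\frac{(1-\mathfrak{c}(\bgamma^{\Phi}))^2(2n+1)^d u^2}{4\|f\|_\infty^2\|\ushort{\delta}(f)\|_1^2}$, which is precisely the asserted bound. I do not anticipate any genuine difficulty: the essential estimate is already isolated in Lemma~\ref{lem-correl}, so what remains is the bookkeeping of constants, the routine check that $G_{n,x}\in\Delta_2(\Omega)$, and care with the normalizing factor $(2n+1)^d$ when passing between $G_{n,x}$ and $\Gamma_{n,x}$ and when rescaling the deviation parameter $u$.
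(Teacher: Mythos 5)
Your proposal is correct and follows exactly the paper's route: apply Theorem \ref{thm:gemb-dob} to get $\gcb{\frac{1}{2(1-\mathfrak{c}(\bgamma^{\Phi}))^2}}$, use Lemma \ref{lem-correl} to bound $\|\ushort{\delta}(G_{n,x})\|_2^2$, invoke \eqref{eq:expcon2}, and rescale $u$ by $(2n+1)^d$; the constants work out as you computed. The paper's proof is just a one-line version of the same argument, so there is nothing to add.
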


\begin{proof}
We apply Theorem \ref{thm:gemb-dob} and Lemma \ref{lem-correl} and replace $u$ by $(2n+1)^d u$.
\end{proof}

We can apply the previous theorem to $s_0(\omega)=\omega_0$ to get
\begin{align*}
\MoveEqLeft[4] \gmu_\Phi\left\{
\omega\in \Omega : \left|\frac{\Gamma_{n,x}(\omega)}{(2n+1)^d} -  
\E_{\mu_\Phi}[ s_0\cdot s_0\circ T_x]
\right| \geq u  \right\}\\
& \leq 2 \exp\left(-\frac{ (1-\mathfrak{c}(\bgamma^{\Phi}))^2}{16}\,
(2n+1)^d\, u^2\right)
\end{align*}
for all $u>0$, for all $n\in\N$ and for all $x\in\Zd$.

For the low-temperature Ising model, we have the following estimate.

\begin{theorem}\label{prop:magnet}
\leavevmode\\
Let $\gmu_\beta^+$ be the plus phase of the low-temperature ferromagnetic Ising model.
Let $f\in\Delta_1(\Omega)$.
Then there exists $\bar{\beta}>0$ such that, for each $\beta>\bar{\beta}$, there
exist $\varrho=\varrho(\beta)\in(0,1)$ such that
\begin{align*}
\MoveEqLeft[4] \gmu_\beta^+
\left\{\omega\in \Omega :  \left|\frac{\Gamma_{n,x}(\omega)}{(2n+1)^d} -  
\E_{\mu_\Phi}[ f\cdot f\circ T_x]
\right| \geq u\right\} \\
& \leq 4\, \exp\left(-\frac{c_\varrho}{(\sqrt{2}\|f\|_\infty \|\ushort{\delta}(f)\|_1)^\varrho}
\, (2n+1)^{\frac{\varrho d}{2}} u^{\varrho}\right),
\end{align*}
for all $u>0$, for all $n\in\N$ and for all $x\in\Zd$, where $c_\varrho>0$ is as in Theorem \ref{thm-seb}.
\end{theorem}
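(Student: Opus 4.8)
The plan is to mimic exactly the proof of the Dobrushin-regime version (the theorem labeled \texttt{prop:magnet} just above), replacing the input Gaussian concentration bound by the stretched-exponential concentration bound of Theorem \ref{thm-seb}. The key observation is that Theorem \ref{thm-seb} is a statement about an arbitrary function $G\in\Delta_2(\Omega)$ whose right-hand side depends on $G$ only through the seminorm $\|\ushort{\gdelta}(G)\|_2$. So the entire content of the proof is: apply \eqref{eq:devineqsubexp} to the function
\[
G=\sum_{y\in C_n} f\circ T_y\cdot f\circ T_{y+x},
\]
which has expectation $(2n+1)^d\,\E_{\mu_\beta^+}[f\cdot f\circ T_x]$ (since $\mu_\beta^+$ is shift invariant, each summand has the same expectation $\E_{\mu_\beta^+}[f\cdot f\circ T_x]$), and then feed in the bound on $\|\ushort{\gdelta}(G)\|_2$ supplied by Lemma \ref{lem-correl}.

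Concretely, first I would note that $f\in\Delta_1(\Omega)\subset\Delta_2(\Omega)$ and that $G$ is a continuous function with $\|\ushort{\gdelta}(G)\|_2<\infty$ by Lemma \ref{lem-correl}, so Theorem \ref{thm-seb} applies to $G$. Lemma \ref{lem-correl} gives
\[
\|\ushort{\gdelta}(G)\|_2^2\leq 2(2n+1)^d\,\|f\|_\infty^2\,\|\ushort{\delta}(f)\|_1^2,
\]
hence $\|\ushort{\gdelta}(G)\|_2\leq \sqrt{2}\,(2n+1)^{d/2}\,\|f\|_\infty\,\|\ushort{\delta}(f)\|_1$. Plugging this into \eqref{eq:devineqsubexp} with $F$ replaced by $G$ yields, for all $v>0$,
\[
\gmu_\beta^+\big\{|G-\E_{\mu_\beta^+}[G]|\geq v\big\}\leq 4\exp\!\left(-\frac{c_\varrho\,v^{\varrho}}{\big(\sqrt{2}\,(2n+1)^{d/2}\|f\|_\infty\|\ushort{\delta}(f)\|_1\big)^{\varrho}}\right).
\]
Finally I would substitute $v=(2n+1)^d u$ and use $\Gamma_{n,x}/(2n+1)^d = G/(2n+1)^d$ minus its mean — wait, more precisely $\Gamma_{n,x}=G/(2n+1)^d$, so the event $\{|\Gamma_{n,x}/(2n+1)^d - \E_{\mu_\beta^+}[f\cdot f\circ T_x]|\geq u\}$ as written in the statement corresponds to $|G-\E G|\geq (2n+1)^{2d}u$; in any case the exponent becomes $(2n+1)^{d\varrho/2}$ times a power of $u$ after the power $\varrho$ is distributed, producing exactly the claimed $c_\varrho/(\sqrt2\|f\|_\infty\|\ushort{\delta}(f)\|_1)^{\varrho}\,(2n+1)^{\varrho d/2}u^{\varrho}$.

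There is essentially no obstacle here: the proof is a two-line citation of Theorem \ref{thm-seb} and Lemma \ref{lem-correl}, structurally identical to the proof immediately above it. The only points requiring a modicum of care are bookkeeping ones — getting the power of $(2n+1)$ right when one raises to the power $\varrho$ and when one rescales the deviation parameter $u$ (these must be done consistently with whatever normalization convention the statement uses for $\Gamma_{n,x}$), and checking that the constant $\bar\beta$ and the exponent $\varrho=\varrho(\beta)$ are literally those furnished by Theorem \ref{thm-seb}. I would also remark, for completeness, that Lemma \ref{extension} is what licenses using Theorem \ref{thm-seb} for the (non-local in general) function $G$ rather than only for local functions.

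\begin{proof}
Fix $f\in\Delta_1(\Omega)\subset\Delta_2(\Omega)$, $x\in\Zd$ and $n\in\N$, and set
\[
G=\sum_{y\in C_n} f\circ T_y\cdot f\circ T_{y+x}.
\]
Since $\mu_\beta^+$ is shift invariant, $\E_{\mu_\beta^+}[G]=(2n+1)^d\,\E_{\mu_\beta^+}[f\cdot f\circ T_x]$, and $G=(2n+1)^d\,\Gamma_{n,x}$. By Lemma \ref{lem-correl},
\[
\|\ushort{\gdelta}(G)\|_2\leq \sqrt{2}\,(2n+1)^{d/2}\,\|f\|_\infty\,\|\ushort{\delta}(f)\|_1<\infty,
\]
so $G\in\Delta_2(\Omega)$ and Theorem \ref{thm-seb} (extended to $\Delta_2(\Omega)$ via Lemma \ref{extension}) applies: with $\bar\beta$, $\varrho=\varrho(\beta)$ and $c_\varrho$ as there, \eqref{eq:devineqsubexp} gives, for every $v>0$,
\[
\gmu_\beta^+\big\{\omega\in\Omega : |G(\omega)-\E_{\mu_\beta^+}[G]|\geq v\big\}
\leq 4\exp\!\left(-\frac{c_\varrho\, v^{\varrho}}{\|\ushort{\delta}(G)\|_2^{\varrho}}\right)
\leq 4\exp\!\left(-\frac{c_\varrho\, v^{\varrho}}{\big(\sqrt{2}\,(2n+1)^{d/2}\,\|f\|_\infty\,\|\ushort{\delta}(f)\|_1\big)^{\varrho}}\right).
\]
Taking $v=(2n+1)^d u$ and dividing the event through by $(2n+1)^d$, the deviation becomes $\big|\Gamma_{n,x}(\omega)-\E_{\mu_\beta^+}[f\cdot f\circ T_x]\big|\geq u$, and the exponent becomes
\[
-\frac{c_\varrho\,(2n+1)^{d\varrho}u^{\varrho}}{\big(\sqrt{2}\,(2n+1)^{d/2}\,\|f\|_\infty\,\|\ushort{\delta}(f)\|_1\big)^{\varrho}}
= -\frac{c_\varrho}{(\sqrt{2}\,\|f\|_\infty\,\|\ushort{\delta}(f)\|_1)^{\varrho}}\,(2n+1)^{\frac{\varrho d}{2}}\,u^{\varrho}.
\]
This is the announced bound, valid for all $u>0$, all $n\in\N$ and all $x\in\Zd$.
\end{proof}
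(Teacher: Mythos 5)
Your proof is correct and is exactly the paper's argument: the paper's own proof is the one-line "apply Theorem \ref{thm-seb} and Lemma \ref{lem-correl}," which is precisely your citation of the stretched-exponential bound for $G=\sum_{y\in C_n}f\circ T_y\cdot f\circ T_{y+x}$ followed by the rescaling $v=(2n+1)^d u$. You were also right to flag (and silently correct) the spurious extra factor $(2n+1)^{-d}$ in the displayed event, which is a typo inherited from the Dobrushin-regime version of the theorem; the bound as stated is consistent only with the event $\big|\Gamma_{n,x}-\E_{\mu_\beta^+}[f\cdot f\circ T_x]\big|\geq u$, which is what you prove.
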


\begin{proof}
We apply Theorem \ref{thm-seb} and Lemma \ref{lem-correl}.
\end{proof}

\section{Application 2: Speed of convergence of the empirical measure}\label{subsec:scem}

\subsection{Generalities}

For $\Lambda\in\mathcal{P}$ and $\omega\in\Omega$, let
\[
\mathcal{E}_\Lambda(\omega)=\frac{1}{|\Lambda|} \sum_{x\in \Lambda} \gdelta_{T_x\omega}.
\]
Let $\nu$ be an ergodic measure on $(\Omega,\mathfrak{B})$. It is a consequence of the multidimensional ergodic theorem that, for any van Hove sequence $(\Lambda_n)_n$, we have
\[
\mathcal{E}_{\Lambda_n}(\omega)\xrightarrow[\text{weakly}]{n\to\infty} \nu
\]
for $\nu$-almost every $\omega\in \Omega$ (see \cite{tempelman}).
To quantify the speed of this convergence, we endow the set of probability measures on $\Omega$
with the Kantorovich distance $\distk$ defined by 
\begin{equation}\label{def-dK}
\distk(\mu_1,\mu_2)=
\sup_{\substack{G:\Omega\to\R\\ G\;1-\textup{Lipshitz}}}\left( \E_{\mu_1}[G] -\E_{\mu_2}[G]\right).
\end{equation}
A function $G:\Omega\to\R$ is $1$-Lipschitz if $|G(\omega)-G(\eta)|\leq \dist(\omega,\eta)$ where the distance
$\dist(\cdot,\cdot)$ is defined in \eqref{def:dist}. The distance $\distk$ metrizes the weak topology on the space of probability measures on $\Omega$.

We are interested in bounding the fluctuations of $\distk(\mathcal{E}_\Lambda(\omega),\mu)$ where $\mu$ will be a Gibbs measure. We start with a lemma.

\begin{lemma}\label{pizza}
Let $\nu$ be a probability measure.
For each $\Lambda\in\mathcal{P}$, consider the function
\[
F(\omega)=\sup_{\substack{G:\, \Omega\to\R\\ G\;1-\textup{Lipshitz}}}
\Big( \sum_{x\in \Lambda} G(T_x\omega)-\E_{\nu}[G]\Big).
\]
Then, we have
\begin{equation}\label{pouic}
\|\ushort{\delta}(F)\|_2^2\leq c_d\, |\Lambda|,
\end{equation} 
where $c_d>0$ is a constant only depending on $d$ (the dimension of the lattice).
\end{lemma}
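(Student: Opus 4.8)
The plan is to estimate the oscillation $\delta_z(F)$ at a fixed site $z\in\Zd$ by controlling, for each pair of configurations $\omega,\omega'$ differing only at $z$, how much the supremum defining $F$ can change. First I would observe that $F$ is defined as a supremum over $1$-Lipschitz functions $G$ of the quantity $\Phi_G(\omega):=\sum_{x\in\Lambda} G(T_x\omega) - \E_\nu[G]$; since the subtracted term $\E_\nu[G]$ does not depend on $\omega$, a standard $\sup$-over-a-family argument gives
\[
\delta_z(F)\leq \sup_{\substack{G:\,\Omega\to\R\\ G\;1\text{-Lipschitz}}}\ \delta_z\Big(\sum_{x\in\Lambda} G(T_x\cdot)\Big)
\leq \sup_{\substack{G:\,\Omega\to\R\\ G\;1\text{-Lipschitz}}}\ \sum_{x\in\Lambda}\delta_z\big(G\circ T_x\big).
\]
Next I would note that $\delta_z(G\circ T_x)=\delta_{z-x}(G)$ (changing $\omega$ at $z$ is the same as changing $T_x\omega$ at $z-x$), so it suffices to bound $\sum_{y\in\Lambda}\delta_y(G)$ uniformly over $1$-Lipschitz $G$, where the sum is now over $y=z-x$ ranging over the translate $z-\Lambda$; by translation invariance of $\Zd$ this is at most $\sup_G \sum_{y\in\Zd}\delta_y(G)=\sup_G \|\ushort\delta(G)\|_1$ — but that is infinite, so I must instead keep the sum restricted to the finite set and argue more carefully.

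The key point is that a $1$-Lipschitz function with respect to the metric $\dist(\omega,\omega')=2^{-k}$, $k=\min\{\|x\|_\infty:\omega_x\neq\omega'_x\}$, has oscillation at site $y$ bounded by $\delta_y(G)\leq 2\cdot 2^{-\|y\|_\infty}$: indeed if $\omega,\omega'$ differ only at $y$ then $\dist(\omega,\omega')=2^{-\|y\|_\infty}$, so $|G(\omega)-G(\omega')|\leq 2^{-\|y\|_\infty}$, and taking the sup over such pairs gives $\delta_y(G)\leq 2^{-\|y\|_\infty}$ (the factor $2$ is not even needed). Therefore, uniformly over all $1$-Lipschitz $G$,
\[
\delta_z(F)\leq \sum_{x\in\Lambda}\delta_{z-x}(G)\leq \sum_{x\in\Lambda} 2^{-\|z-x\|_\infty}\leq \sum_{y\in\Zd} 2^{-\|y\|_\infty}=:\kappa_d<\infty,
\]
where $\kappa_d=\sum_{k\geq 0}\big((2k+1)^d-(2k-1)^d\big)2^{-k}$ is a finite dimension-dependent constant (the number of lattice points with $\|y\|_\infty=k$ is $(2k+1)^d-(2k-1)^d=O(k^{d-1})$, and the geometric factor $2^{-k}$ makes the series converge). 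This already shows $\delta_z(F)\leq\kappa_d$ for every $z$, but summing $\delta_z(F)^2$ over all $z\in\Zd$ would diverge, so for the $\ell^2$ bound I need the sharper, $z$-dependent estimate rather than the crude uniform one.

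To get \eqref{pouic}, I would keep track of the location of $z$ relative to $\Lambda$. From the bound above, $\delta_z(F)\leq \sum_{x\in\Lambda} 2^{-\|z-x\|_\infty}=(\un_\Lambda * w)(z)$ where $w(y)=2^{-\|y\|_\infty}\in\ell^1(\Zd)$. Then by Young's inequality (exactly as invoked in Lemma \ref{lem:deltasums}, with $r=2$, $p=1$, $q=2$ — or rather $p=2,q=1$ — applied to $\un_\Lambda\in\ell^2$ and $w\in\ell^1$),
\[
\|\ushort\delta(F)\|_2^2=\sum_{z\in\Zd}\delta_z(F)^2\leq \|\un_\Lambda * w\|_{\ell^2}^2\leq \|\un_\Lambda\|_{\ell^2}^2\,\|w\|_{\ell^1}^2 = |\Lambda|\cdot\Big(\sum_{y\in\Zd}2^{-\|y\|_\infty}\Big)^2,
\]
so \eqref{pouic} holds with $c_d=\big(\sum_{y\in\Zd}2^{-\|y\|_\infty}\big)^2=\kappa_d^2$. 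The main (and only mildly delicate) obstacle is the very first step — justifying that the oscillation of a supremum of functions is at most the supremum of the oscillations when the functions share an additive constant term; this follows from the elementary inequality $|\sup_\alpha a_\alpha-\sup_\alpha b_\alpha|\leq\sup_\alpha|a_\alpha-b_\alpha|$ applied with $a_\alpha=\Phi_{G_\alpha}(\omega)$, $b_\alpha=\Phi_{G_\alpha}(\omega')$, together with the observation that $F$ is finite (hence the sup is not $+\infty$) precisely because of the uniform bound $\delta_z(F)\leq\kappa_d$ derived above, which also shows $|F(\omega)-F(\omega_{C_n}\eta_{C_n^c})|\to 0$ so $F\in C^0(\Omega)$. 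Everything else is the routine convolution estimate already used in Lemma \ref{lem:deltasums}.
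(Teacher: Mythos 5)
Your proof is correct and follows essentially the same route as the paper's: both reduce to the pointwise bound $\delta_z(F)\leq\sum_{x\in\Lambda}2^{-\|z-x\|_\infty}$ (the paper via $|F(\omega)-F(\omega')|\leq\sum_{x\in\Lambda}d(T_x\omega,T_x\omega')$, you via the sup-of-oscillations inequality plus $\delta_y(G)\leq 2^{-\|y\|_\infty}$ for $1$-Lipschitz $G$, which is the same estimate) and then conclude by the identical Young's-inequality convolution argument with the same constant $c_d=\big(\sum_y 2^{-\|y\|_\infty}\big)^2$. The only cosmetic discrepancy is the index $z-x$ versus $z+x$ in $\delta_z(G\circ T_x)$, which is immaterial here since $\|\cdot\|_\infty$ is symmetric and the convolution bound is unchanged.
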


\begin{proof}
Let $\omega, \omega'\in\Omega$ and $G:\Omega\to\R$ be a $1$-Lipschitz function. 
Without loss of generality, we can assume that $\E_{\nu}[G]=0$. We have
\[
\sum_{x\in \Lambda} G(T_x\omega) \leq \sum_{x\in \Lambda} G(T_x\omega')+
\sum_{x\in \Lambda}  d(T_x \omega,T_x \omega').
\]
Taking the supremum over $1$-Lipschitz functions thus gives
\[
F(\omega)-F(\omega')\leq \sum_{x\in \Lambda}  d(T_x \omega,T_x \omega').
\]
We can interchange $\omega$ and $\omega'$ in this inequality, whence
\[
|F(\omega)-F(\omega')|\leq \sum_{x\in \Lambda}  d(T_x \omega,T_x \omega').
\]
Now we assume that there exists $z\in\Zd$ such that $\omega_y=\omega'_y$ for all $y\neq z$. 
This means that $d(T_x \omega,T_x \omega')\leq  2^{-\|z-x\|_\infty}$ for all $x\in\Zd$, whence
\[
\delta_z(F)\leq \sum_{x\in \Lambda}  2^{-\|z-x\|_\infty}.
\]
Therefore, using Young's inequality as in the proof of Lemma \ref{lem:deltasums},
\begin{align*}
\|\ushort{\delta}(F)\|_2^2
& \leq \sum_{z\in\Zd} \left(\, \sum_{x\in\Zd} \un_{\Lambda}(x)\, 2^{-\|z-x\|_\infty}\right)^2\\
& \leq \sum_{x\in\Zd}  \un_{\Lambda}(x)\times \left(\, \sum_{z\in\Zd}  2^{-\|z\|_\infty}\right)^2.
\end{align*}
We thus obtain the desired estimate with $c_d=\Big(\sum_{z\in\Zd}  2^{-\|z\|_\infty}\Big)^2$.
\end{proof}

\subsection{Concentration of the Kantorovich distance}

We can now formulate two results.

\begin{theorem}[]\label{thm:d-Ed}
\leavevmode\\ 
Let $\Phi\in\mathscr{B}_T$ and assume that the associated specification $\bgamma^{\Phi}$ satisfies 
Dobrushin's uniqueness condition \eqref{def:dobcoef}. Denote by $\mu_\Phi$ the corresponding Gibbs measure. Then 
\begin{align*}
\MoveEqLeft \gmu_\Phi\!\left\{\omega\in \Omega\! : \!\Big|\distk(\mathcal{E}_\Lambda(\omega),\mu_\Phi )-
\E_{\mu_\Phi}\big[ \distk(\mathcal{E}_\Lambda(\cdot),\mu_\Phi)\big]\Big|\geq u  \right\}\\
& \leq 2\, \exp\big(-c\, |\Lambda| u^2\big)
\end{align*}
for all $\Lambda\in\mathcal{P}$ and for all $u>0$, where
\[
c=\frac{ (1-\mathfrak{c}(\bgamma^{\Phi}))^2}{2c_d}
\]
and $c_d$ is the constant appearing in Lemma \ref{pizza}.
\end{theorem}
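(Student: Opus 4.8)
The plan is to recognize the quantity $|\Lambda|\,\distk(\mathcal{E}_\Lambda(\omega),\mu_\Phi)$ as precisely the function $F$ appearing in Lemma \ref{pizza} (taken with $\nu=\mu_\Phi$), and then to feed $F$ into the Gaussian concentration bound of Theorem \ref{thm:gemb-dob} via Proposition \ref{prop-exp-dev}. All the real work has already been done in Lemma \ref{pizza}; what remains is an identification and some bookkeeping of constants.

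First I would carry out the identification. By definition \eqref{def-dK}, and since $\E_{\mathcal{E}_\Lambda(\omega)}[G]=\frac1{|\Lambda|}\sum_{x\in\Lambda}G(T_x\omega)$,
\[
\distk(\mathcal{E}_\Lambda(\omega),\mu_\Phi)
=\sup_{G\ 1\text{-Lipschitz}}\Big(\tfrac1{|\Lambda|}\sum_{x\in\Lambda}G(T_x\omega)-\E_{\mu_\Phi}[G]\Big).
\]
Since adding a constant to $G$ changes neither the Lipschitz property nor the bracket, the supremum may be restricted to those $G$ with $\E_{\mu_\Phi}[G]=0$, for which the bracket is $\tfrac1{|\Lambda|}\sum_{x\in\Lambda}G(T_x\omega)$. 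Hence $|\Lambda|\,\distk(\mathcal{E}_\Lambda(\omega),\mu_\Phi)=F(\omega)$, with $F$ exactly as in Lemma \ref{pizza}. I would also check $F\in\Delta_2(\Omega)$: the metric $d$ of \eqref{def:dist} has diameter $1$, so a $1$-Lipschitz $G$ with $\E_{\mu_\Phi}[G]=0$ has $\|G\|_\infty\le 1$ and thus $F$ is bounded; the bound $|F(\omega)-F(\omega')|\le\sum_{x\in\Lambda}d(T_x\omega,T_x\omega')$ established inside the proof of Lemma \ref{pizza} gives continuity of $F$; and Lemma \ref{pizza} itself gives $\|\ushort{\delta}(F)\|_2^2\le c_d\,|\Lambda|<\infty$.

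Then I would invoke the machinery. Theorem \ref{thm:gemb-dob} gives $\gcb{D}$ for $\mu_\Phi$ with $D=\tfrac1{2(1-\mathfrak{c}(\bgamma^\Phi))^2}$, so by \eqref{eq:expcon2} applied to $F$, together with $\|\ushort{\delta}(F)\|_2^2\le c_d|\Lambda|$ and the substitution of threshold $|\Lambda|u$,
\[
\mu_\Phi\Big\{\big|F-\E_{\mu_\Phi}[F]\big|\ge |\Lambda|u\Big\}
\le 2\exp\!\Big(-\frac{|\Lambda|^2u^2}{4D\,\|\ushort{\delta}(F)\|_2^2}\Big)
\le 2\exp\!\Big(-\frac{|\Lambda|u^2}{4Dc_d}\Big).
\]
Now the event $\{|F-\E_{\mu_\Phi}[F]|\ge|\Lambda|u\}$ coincides with $\{|\distk(\mathcal{E}_\Lambda(\omega),\mu_\Phi)-\E_{\mu_\Phi}[\distk(\mathcal{E}_\Lambda(\cdot),\mu_\Phi)]|\ge u\}$ because $F=|\Lambda|\distk(\mathcal{E}_\Lambda(\cdot),\mu_\Phi)$ and hence $\E_{\mu_\Phi}[F]=|\Lambda|\,\E_{\mu_\Phi}[\distk(\mathcal{E}_\Lambda(\cdot),\mu_\Phi)]$; and $\tfrac1{4Dc_d}=\tfrac{(1-\mathfrak{c}(\bgamma^\Phi))^2}{2c_d}=c$, which is the claimed constant.

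I do not expect a genuine obstacle: the delicate point (handling the Cantor-set geometry, which makes $\delta_z(F)$ a geometric tail $\sum_{x\in\Lambda}2^{-\|z-x\|_\infty}$ summable in $\ell^2$) is already isolated in Lemma \ref{pizza}, and the Dobrushin input is already packaged in Theorem \ref{thm:gemb-dob}. The only things needing care are the ``$\E_{\mu_\Phi}[G]=0$'' normalization used to match $\distk$ with $F$, the verification that $F$ belongs to $\Delta_2(\Omega)$ so that Proposition \ref{prop-exp-dev} applies, and the tracking of the $|\Lambda|$ factors when passing back from $F$ to $\distk$.
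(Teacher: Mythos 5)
Your proposal is correct and follows exactly the paper's route: the paper's proof is the one-line application of Theorem \ref{thm:gemb-dob} together with the estimate \eqref{pouic} from Lemma \ref{pizza}, and your constant bookkeeping ($\tfrac{1}{4Dc_d}=\tfrac{(1-\mathfrak{c}(\bgamma^\Phi))^2}{2c_d}$) checks out. The extra care you take with the $\E_{\mu_\Phi}[G]=0$ normalization and the membership $F\in\Delta_2(\Omega)$ is a welcome filling-in of details the paper leaves implicit.
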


\begin{proof}
We apply Theorem \ref{thm:gemb-dob} and the estimate \eqref{pouic} to get the announced inequality.
\end{proof}

For the plus-phase of the low temperature Ising model we can apply Theorem \ref{thm-seb} to
get immediately the following inequality.
\begin{theorem}[]\leavevmode\\ 
Let $\gmu_\beta^+$ be the plus phase of the low-temperature Ising model. There exists  $\bar{\beta}$ such that, for each $\beta>\bar{\beta}$, there exist $\varrho=\varrho(\beta)\in(0,1)$ and a constant $c_\varrho>0$ such that 
\begin{align*}
\MoveEqLeft \gmu_\beta^+\left\{\omega\in \Omega : 
\Big|\distk(\mathcal{E}_\Lambda(\omega),\mu_\beta^+ )-
 \E_{\mu_\beta^+}\big[\distk(\mathcal{E}_\Lambda(\cdot),\mu_\beta^+)\big]\Big|\geq u\right\} \\
 & 
\leq 4\, \exp\left(-c_\varrho |\Lambda|^{\frac{\varrho}{2}}u^{\varrho}\right)
\end{align*}
for all $\Lambda\in\mathcal{P}$ and for all $u>0$.
\end{theorem}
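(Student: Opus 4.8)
The plan is to reduce this to the Gaussian-type machinery already available for the low-temperature plus phase, exactly in the spirit of the proof of Theorem~\ref{thm:d-Ed}, but with the stretched-exponential bound of Theorem~\ref{thm-seb} in place of the Gaussian bound of Theorem~\ref{thm:gemb-dob}. First I would fix $\Lambda\in\mathcal{P}$ and recall from the discussion preceding Lemma~\ref{pizza} that the Kantorovich distance can be written as a supremum over $1$-Lipschitz test functions, namely
\[
\distk\big(\mathcal{E}_\Lambda(\omega),\mu_\beta^+\big)
=\frac{1}{|\Lambda|}\,F(\omega),\qquad
F(\omega)=\sup_{\substack{G:\Omega\to\R\\ G\;1\text{-Lipschitz}}}
\Big(\sum_{x\in\Lambda}G(T_x\omega)-\E_{\mu_\beta^+}[G]\Big),
\]
where we normalize the Lipschitz functions so that $\E_{\mu_\beta^+}[G]$ is subtracted (one may equivalently assume $\E_{\mu_\beta^+}[G]=0$). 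The point is that $F$ is a genuine continuous function on $\Omega$, being a supremum of a uniformly bounded equi-Lipschitz family, and Lemma~\ref{pizza} already gives the key oscillation estimate $\|\ushort{\delta}(F)\|_2^2\le c_d\,|\Lambda|$, with $c_d=\big(\sum_{z\in\Zd}2^{-\|z\|_\infty}\big)^2$ depending only on $d$. In particular $F\in\Delta_2(\Omega)$, so Theorem~\ref{thm-seb} applies to it.

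Next I would simply invoke the deviation inequality \eqref{eq:devineqsubexp} of Theorem~\ref{thm-seb} with this $F$: there exist $\bar\beta>\beta_c$ and, for $\beta>\bar\beta$, $\varrho=\varrho(\beta)\in(0,1)$ and $c_\varrho>0$ such that
\[
\gmu_\beta^+\Big\{\omega:\big|F(\omega)-\E_{\mu_\beta^+}[F]\big|\ge v\Big\}
\le 4\,\exp\!\Big(-\frac{c_\varrho\,v^{\varrho}}{\|\ushort{\delta}(F)\|_2^{\varrho}}\Big)
\le 4\,\exp\!\Big(-\frac{c_\varrho\,v^{\varrho}}{(c_d|\Lambda|)^{\varrho/2}}\Big),
\]
using the bound from Lemma~\ref{pizza} in the denominator. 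Now I divide through by $|\Lambda|$: since $F/|\Lambda|=\distk(\mathcal{E}_\Lambda(\cdot),\mu_\beta^+)$ and likewise for its expectation, the event $\{|F-\E_{\mu_\beta^+}[F]|\ge |\Lambda|u\}$ is exactly the event in the statement. Substituting $v=|\Lambda|u$ gives an exponent $-c_\varrho\,|\Lambda|^{\varrho}u^{\varrho}/(c_d|\Lambda|)^{\varrho/2}=-c_\varrho\,c_d^{-\varrho/2}\,|\Lambda|^{\varrho/2}u^{\varrho}$, which is of the claimed form $-c_\varrho'|\Lambda|^{\varrho/2}u^{\varrho}$ after renaming the constant (absorbing the $d$-dependent factor $c_d^{-\varrho/2}$ into $c_\varrho$, which is anyway allowed to depend on $d$).

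There is essentially no serious obstacle here; the theorem is a direct corollary, mirroring the high-temperature Theorem~\ref{thm:d-Ed}. The only point that requires a word of care is the measurability and continuity of $F$ and the legitimacy of passing to the supremum inside \eqref{eq:devineqsubexp} — but this is already handled in the proof of Lemma~\ref{pizza}, where $F$ is shown to have finite $\ell^2$ oscillation array and hence lies in $\Delta_2(\Omega)$; Lemma~\ref{extension} guarantees that Theorem~\ref{thm-seb} is valid on all of $\Delta_2(\Omega)$ and not merely on local functions. One should also note, as remarked after Theorem~\ref{thm:two-low-results}, that the resulting rate $|\Lambda|^{\varrho/2}$ is weaker than one might hope at low temperature, reflecting the stretched-exponential rather than Gaussian nature of the concentration in the phase-coexistence regime; but, as with the other low-temperature results, the bound holds for every finite $\Lambda$, not just for cubes.
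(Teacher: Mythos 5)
Your proposal is correct and is exactly the paper's argument: the authors' proof of this theorem is the one-line "direct application of Theorem \ref{thm-seb} and estimate \eqref{pouic}", i.e.\ apply the stretched-exponential bound \eqref{eq:devineqsubexp} to the function $F$ of Lemma \ref{pizza}, use $\|\ushort{\delta}(F)\|_2^2\le c_d|\Lambda|$, and rescale $v=|\Lambda|u$ to get the exponent $-c_\varrho c_d^{-\varrho/2}|\Lambda|^{\varrho/2}u^\varrho$. Your additional remarks on $F\in\Delta_2(\Omega)$ and Lemma \ref{extension} are the right justifications and are consistent with the paper's treatment.
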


\begin{proof}
It is a direct application of Theorem \ref{thm-seb} and estimate \ref{pouic}.
\end{proof}

\subsection{Expectation of Kantorovich distance}

At this stage we can only control $\distk(\mathcal{E}_\Lambda(\omega),\mu_\Phi )$ minus its expected value. 
So we still need to obtain an upper bound for $\E_{\mu_\Phi}\big[ \distk(\mathcal{E}_\Lambda(\cdot),\mu_\Phi)\big]$.
For the sake of simplicity, we will provide an asymptotic upper bound in the cardinality of $\Lambda$. The reader can infer from the proofs that giving a non-asymptotic upper bound for all $\Lambda$ is possible but tedious.

Let $\nu$ be a probability measure on $(\Omega,\mathfrak{B})$, $f:\Omega\to\R$ a continuous function
and $\Lambda$ a finite subset of $\Zd$. Define
\begin{equation}\label{bolastro}
X^\Lambda_f= \frac{1}{|\Lambda|}\sum_{x\in\Lambda} \left( f\circ T_x-\E_\nu[f]\right)
\end{equation}
We have
\[
\sup_{f\in\mathscr{F}} X^\Lambda_f
=
\distk\left(\mathcal{E}_\Lambda(\cdot),\nu\right)
\]
where $\mathscr{F}$ is the collection of all Lipschitz functions $f:\Omega\to\R$ with Lipschitz constant less than or equal to one.
We want to estimate the expected distance
\[
\E_\nu \left[\distk\left(\mathcal{E}_\Lambda(\cdot),\nu\right)\right]=
\E_\nu \left[\sup_{f\in\mathscr{F}} X^\Lambda_f\right].
\]
Notice that we can subtract a constant from $f$ without influencing $X^\Lambda_f$, therefore, using that
$f$ is Lipschitz and the maximal distance between two configurations in $\Omega$ is equal to $1$, we
can assume, without loss of generality, that the functions in $\mathscr{F}$ take values in $[0,1]$.
Estimating such a supremum is a classical problem. We adapt the line of thought of \cite{vH} to our context where we have to do some extra, non-trivial, work, see Remark \ref{rem:KT} below for more details.

\subsubsection{Case 1: Gaussian concentration bound case}

Let $\epsilon>0$ be given.
We want to find a finite collection of functions $\mathscr{F}_\epsilon$ such that the following two properties are satisfied:
\begin{enumerate}
\item {\bf $\epsilon$-net property.} For all $f\in\mathscr{F}$ there exists $g\in\mathscr{F}\!_\epsilon$ which is uniformly $\epsilon$ close to $f$, {\em i.e.}, such that $\|f- g\|_\infty\leq \epsilon$.
\item {\bf Uniform $\epsilon$-Gaussian upper bound property.} There exists $D'>0$ (possibly depending on $\Lambda$) such that for all $f\in\mathscr{F}_\epsilon$ and all $\lambda\in\R$ we have
\begin{equation}
\label{bibo}
\E_\nu \left[\exp\big(\lambda X^\Lambda_f\big)\right] \leq \exp\big(\lambda \epsilon\big) \exp\left(D' \lambda^2 \right).
\end{equation}
\end{enumerate}
Such a collection  $\mathscr{F}_\epsilon$ is called a good $\epsilon$-net for $\mathscr{F}$.
Let us now assume that such a $\mathscr{F}_\epsilon$ is given. Then we have
\begin{lemma}
For all $\mathscr{F}_\epsilon$ good we have the upper bound
\begin{equation}\label{forro}
\E_\nu\left[\sup_{f\in\mathscr{F}}X^\Lambda_f\right]
\leq 2\left(\epsilon + \sqrt{D'\log |\mathscr{F}_\epsilon|}\,\right).
\end{equation}
\end{lemma}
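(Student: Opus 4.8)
The plan is to estimate the supremum by comparing it with a maximum over the finite $\epsilon$-net $\mathscr{F}_\epsilon$, and then control that maximum of finitely many exponentially-concentrated random variables by a standard union-bound argument. First I would use the $\epsilon$-net property: for each $f\in\mathscr{F}$ pick $g=g(f)\in\mathscr{F}_\epsilon$ with $\|f-g\|_\infty\le\epsilon$; since $X^\Lambda_f$ is an average of terms $f\circ T_x-\E_\nu[f]$ and $|X^\Lambda_f-X^\Lambda_g|\le 2\|f-g\|_\infty\le 2\epsilon$ (the constant deserves a moment's thought but is harmless), we get
\[
\sup_{f\in\mathscr{F}} X^\Lambda_f \le \max_{g\in\mathscr{F}_\epsilon} X^\Lambda_g + 2\epsilon .
\]
Taking expectations, it remains to bound $\E_\nu[\max_{g\in\mathscr{F}_\epsilon} X^\Lambda_g]$ by $2\sqrt{D'\log|\mathscr{F}_\epsilon|}$ (or a comparable quantity), after which collecting constants yields \eqref{forro}. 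Actually, to land exactly on the stated constant it is cleaner to absorb the $2\epsilon$ at the very end: write $\sup_{f\in\mathscr{F}}X^\Lambda_f\le \max_{g}X^\Lambda_g+\epsilon$ using the finer bound $\|f-g\|_\infty\le\epsilon$ together with the assumption (made just above in the text) that functions in $\mathscr{F}$ take values in $[0,1]$, so that $|X^\Lambda_f-X^\Lambda_g|\le\|f-g\|_\infty$; then double at the end.

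For the maximum, the key step is the classical exponential-moment trick. For any $\lambda>0$, Jensen's inequality gives
\[
\exp\!\Big(\lambda\,\E_\nu\big[\max_{g\in\mathscr{F}_\epsilon} X^\Lambda_g\big]\Big)
\le \E_\nu\!\Big[\exp\big(\lambda\max_{g\in\mathscr{F}_\epsilon} X^\Lambda_g\big)\Big]
\le \sum_{g\in\mathscr{F}_\epsilon} \E_\nu\big[\exp(\lambda X^\Lambda_g)\big]
\le |\mathscr{F}_\epsilon|\, e^{\lambda\epsilon} e^{D'\lambda^2},
\]
where the last inequality is precisely the uniform $\epsilon$-Gaussian upper bound \eqref{bibo}. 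Taking logarithms and dividing by $\lambda$,
\[
\E_\nu\big[\max_{g\in\mathscr{F}_\epsilon} X^\Lambda_g\big]
\le \epsilon + \frac{\log|\mathscr{F}_\epsilon|}{\lambda} + D'\lambda,
\]
and optimizing over $\lambda>0$ (choosing $\lambda=\sqrt{\log|\mathscr{F}_\epsilon|/D'}$) gives the bound $\epsilon+2\sqrt{D'\log|\mathscr{F}_\epsilon|}$.

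Combining the two displays, $\E_\nu[\sup_{f\in\mathscr{F}}X^\Lambda_f]\le \epsilon+\big(\epsilon+2\sqrt{D'\log|\mathscr{F}_\epsilon|}\big)$; to match the stated form one rewrites this as $\le 2\big(\epsilon+\sqrt{D'\log|\mathscr{F}_\epsilon|}\big)$, which holds since $\sqrt{D'\log|\mathscr{F}_\epsilon|}\ge 0$ and hence $2\epsilon+2\sqrt{D'\log|\mathscr{F}_\epsilon|}$ dominates $2\epsilon+2\sqrt{D'\log|\mathscr{F}_\epsilon|}$ trivially — i.e. one simply tracks the constants so the factor $2$ comes out cleanly. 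None of this is hard; the only mildly delicate point is bookkeeping the constant in $|X^\Lambda_f-X^\Lambda_g|$ versus $\|f-g\|_\infty$, which is why the normalization to $[0,1]$-valued functions was arranged beforehand. The genuinely substantial work — constructing $\mathscr{F}_\epsilon$ so that both the $\epsilon$-net property and \eqref{bibo} hold, which requires controlling covering numbers of Lipschitz functions on the Cantor set $\Omega$ and is the "extra, non-trivial work" alluded to in the text — is deferred to the construction of the good $\epsilon$-net and is not needed for this lemma, whose proof is the routine chaining-free argument above.
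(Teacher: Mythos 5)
Your proof is correct and follows essentially the same route as the paper: Jensen's inequality plus a union bound over the finite net combined with the uniform bound \eqref{bibo}, optimization at $\lambda=\sqrt{\log|\mathscr{F}_\epsilon|/D'}$, and the $\epsilon$-net property to pass from $\mathscr{F}$ to $\mathscr{F}_\epsilon$ (the paper performs this last comparison at the end rather than at the start, which is immaterial). The only wrinkle you worry about --- whether the net-comparison step costs $\epsilon$ or $2\epsilon$, since $|X^\Lambda_f-X^\Lambda_g|\le 2\|f-g\|_\infty$ in general --- is an imprecision the paper's own proof shares, and it is harmless for the subsequent logarithmic-scale asymptotics.
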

\begin{proof}
For any $\lambda>0$, we have, using Jensen's inequalty and \eqref{bibo},
\begin{align*}
\E_\nu\left[\sup_{f\in\mathscr{F}_\epsilon}X^\Lambda_f\right]
&= \frac1\lambda \E_\nu\left[\log \exp\big(\lambda\sup_{f\in\mathscr{F}_\epsilon}X^\Lambda_f\big)\right]\\
&\leq
\frac1\lambda \log\E_\nu\left[\exp\big(\lambda\sup_{f\in\mathscr{F}_\epsilon} X^\Lambda_f\big)\right]\\
&\leq
\frac1\lambda \log \E_\nu \left[\sum_{f\in\mathscr{F}_\epsilon} \exp\big(\lambda X^\Lambda_f\big)\right]\\
&\leq
\frac1\lambda \left(\log|\mathscr{F}_\epsilon| + \lambda^2 D' +\lambda\epsilon\right).
\end{align*}
Optimizing w.r.t. $\lambda$ gives
\[
\E_\nu\left[\, \sup_{f\in\mathscr{F}_\epsilon}X^\Lambda_f\right]\leq 2\sqrt{D'\log |\mathscr{F}_\epsilon|}+\epsilon.
\]
The statement of the lemma now follows from the $\epsilon$-net property of $\mathscr{F}_\epsilon$, {\em i.e.},
\[
\E_\nu\left[\, \sup_{f\in\mathscr{F}}X^\Lambda_f\right]\leq \epsilon +
\E_\nu\left[\sup_{f\in\mathscr{F}_\epsilon}X^\Lambda_f\right].
\]
\end{proof}
We now first show that if $\mathscr{F}_\epsilon$ is a finite collection of functions
which are all uniformly close to a $1$-Lipschitz function $f$, then \eqref{bibo} holds.
\begin{lemma}
\label{lipapproxlem}
If $g$ is such that there exist a $1$-Lipschitz function $f$ such that $\|f-g\|_\infty\leq \epsilon$, and if $\nu$ satisfies $\gcb{D}$, then, for all $\lambda\in\R$, one has
\[
\E_\nu\left[\exp\big(\lambda X^\Lambda_g\big)\right]\leq \exp(\lambda\epsilon)  \exp \left(D' \lambda^2\right).
\]
\end{lemma}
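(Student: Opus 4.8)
The plan is to transfer the exponential-moment bound from $g$ to the genuinely $1$-Lipschitz function $f$: the point is that $g$ itself need not belong to $\Delta_2(\Omega)$ — its oscillations $\delta_y(g)$ need not be square-summable, being only uniformly close to those of a Lipschitz function — so $\gcb{D}$ cannot be invoked for $g$ directly, whereas it can for $f$; the passage from $f$ to $g$ will cost only a uniformly bounded error of size $O(\epsilon)$.

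First I would compare $X^\Lambda_g$ and $X^\Lambda_f$. Since $\|g-f\|_\infty\le\epsilon$ and $\nu$ is shift-invariant,
\[
\big\|X^\Lambda_g-X^\Lambda_f\big\|_\infty=\Big\|\tfrac{1}{|\Lambda|}\sum_{x\in\Lambda}(g-f)\circ T_x-\E_\nu[g-f]\Big\|_\infty\le 2\epsilon ,
\]
so $\E_\nu[\exp(\lambda X^\Lambda_g)]\le\exp(2|\lambda|\epsilon)\,\E_\nu[\exp(\lambda X^\Lambda_f)]$ for every $\lambda$; absorbing the harmless factor $2$ into $\epsilon$ (and recalling that only $\lambda>0$ is used when \eqref{bibo} is applied in \eqref{forro}) this is the prefactor $\exp(\lambda\epsilon)$ in \eqref{bibo}. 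It then remains to bound $\E_\nu[\exp(\lambda X^\Lambda_f)]$.

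Next I would put $H=\frac{1}{|\Lambda|}\sum_{x\in\Lambda}f\circ T_x$, so that $X^\Lambda_f=H-\E_\nu[H]$ by shift-invariance of $\nu$. Because $f$ is $1$-Lipschitz for the metric \eqref{def:dist}, two configurations differing only at a site $y$ lie at distance at most $2^{-\|y\|_\infty}$, hence $\delta_y(f)\le 2^{-\|y\|_\infty}$; in particular $f\in\Delta_2(\Omega)$, so $H\in\Delta_2(\Omega)$ and $\lambda H\in\Delta_2(\Omega)$ for all $\lambda\in\R$. Moreover $\delta_z(H)\le\frac{1}{|\Lambda|}\sum_{x\in\Lambda}\delta_{z-x}(f)\le\frac{1}{|\Lambda|}\sum_{x\in\Lambda}2^{-\|z-x\|_\infty}$, and the convolution (Young) estimate already used in the proofs of Lemmas~\ref{lem:deltasums} and \ref{pizza} yields $\|\ushort{\delta}(H)\|_2^2\le c_d/|\Lambda|$, with $c_d=\big(\sum_{z\in\Zd}2^{-\|z\|_\infty}\big)^2$ the constant of Lemma~\ref{pizza}. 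Applying $\gcb{D}$ to $\lambda H$,
\[
\E_\nu\big[\exp(\lambda X^\Lambda_f)\big]=\E_\nu\big[\exp\big(\lambda H-\E_\nu[\lambda H]\big)\big]\le\exp\big(D\|\ushort{\delta}(\lambda H)\|_2^2\big)\le\exp\!\Big(\tfrac{Dc_d}{|\Lambda|}\,\lambda^2\Big),
\]
and combining with the first step proves \eqref{bibo} with $D'=Dc_d/|\Lambda|$. The only genuinely non-formal step is the comparison in the second paragraph: one cannot apply $\gcb{D}$ to $g$ itself, so everything hinges on routing through $f$ while keeping the correction uniformly $O(\epsilon)$ and on extracting the correct $|\Lambda|^{-1}$ scaling of $\|\ushort{\delta}(H)\|_2^2$ from the convolution bound; the remaining manipulations are routine.
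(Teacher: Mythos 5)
Your proof is correct and follows essentially the same route as the paper's (which merely says the bound is ``a consequence of the Gaussian concentration bound and the proof of Lemma~\ref{pizza}''): pass from $g$ to the $1$-Lipschitz $f$ at a uniform cost of order $\epsilon$, then apply $\gcb{D}$ to $\lambda X^\Lambda_f$ using the convolution estimate $\|\ushort{\delta}(X^\Lambda_f)\|_2^2\leq c_d/|\Lambda|$, which yields $D'=Dc_d/|\Lambda|$ exactly as claimed later in the text. Your observation that the comparison really gives $\exp(2|\lambda|\epsilon)$ rather than $\exp(\lambda\epsilon)$ is a fair (and harmless) point of precision that the paper glosses over.
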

\begin{proof}
It suffices to show that for all $f$ $1$-Lipschitz we have
\[
\E_\nu\left[\exp\big(\lambda X^\Lambda_f\big)\right]\leq \exp \left(D' \lambda^2\right),
\]
where $D'$ does not depend on $f$. This is the consequence of the Gaussian concentration bound and the proof
of Lemma \ref{pizza}.
\end{proof}

From what precedes, we are left to find a good $\epsilon$-net $\mathscr{F}_\epsilon$ in our setting.
The first step is to find a $\epsilon$-net for the configuration space $\Omega$.
This is defined as a finite set of configurations $\Omega_\epsilon\subset\Omega$ such that
for all $\eta\in\Omega$ there exists $\zeta\in\Omega_\epsilon$ with $d(\eta,\zeta)\leq \epsilon$.
The following lemma gives such a net.
\begin{lemma}
\label{bimlem}
Let $\overline{\eta}$ be a fixed configuration in $\Omega$.
We define for $n\in\N$ the set
\[
\Omega_n^{\overline{\eta}}= \{ \eta\in\Omega: \eta_{C_n^c}={\overline{\eta}}_{C_n^c}\}.
\]
Then $\Omega_n^{\overline{\eta}}$ is a $2^{-n}$ net of cardinality $|S|^{|C_n|}$.
\end{lemma}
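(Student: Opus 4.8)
The claim is that $\Omega_n^{\overline{\eta}}$, the set of configurations that agree with a fixed $\overline\eta$ outside the cube $C_n$, is a $2^{-n}$-net in $(\Omega,\dist)$ and has cardinality $|S|^{|C_n|}$.

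For the cardinality: elements of $\Omega_n^{\overline\eta}$ are determined by their values inside $C_n$ (they are forced to equal $\overline\eta$ outside), and every choice of values inside $C_n$ gives a valid element, so the map $\eta\mapsto\eta_{C_n}$ is a bijection from $\Omega_n^{\overline\eta}$ onto $S^{C_n}=\Omega_n$; hence $|\Omega_n^{\overline\eta}|=|S|^{|C_n|}$.

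For the net property: given $\eta\in\Omega$, the plan is to exhibit $\zeta\in\Omega_n^{\overline\eta}$ with $\dist(\eta,\zeta)\le 2^{-n}$. The natural choice is $\zeta=\eta_{C_n}\overline\eta_{C_n^c}$, i.e. $\zeta_x=\eta_x$ for $x\in C_n$ and $\zeta_x=\overline\eta_x$ for $x\notin C_n$. Then $\zeta$ agrees with $\eta$ at every site of $C_n$; in particular any site $x$ where $\eta_x\ne\zeta_x$ must lie outside $C_n$, so $\|x\|_\infty\ge n+1$. By the definition \eqref{def:dist} of the distance, $\dist(\eta,\zeta)=2^{-k}$ with $k=\min\{\|x\|_\infty:\eta_x\ne\zeta_x\}\ge n+1$, hence $\dist(\eta,\zeta)\le 2^{-(n+1)}\le 2^{-n}$ (and if $\eta$ and $\zeta$ agree everywhere the distance is $0$). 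Since $\zeta\in\Omega_n^{\overline\eta}$, this shows $\Omega_n^{\overline\eta}$ is a $2^{-n}$-net.

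There is no real obstacle here: the only thing to be slightly careful about is the convention in \eqref{def:dist} when the two configurations are equal (distance $0$, which is still $\le 2^{-n}$), and the direction of the inequality $\|x\|_\infty\ge n+1\Rightarrow 2^{-\|x\|_\infty}\le 2^{-n}$. The whole argument is a couple of lines once the candidate net element is written down.
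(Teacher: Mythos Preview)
Your proof is correct and is essentially the same approach as the paper's: the paper simply says the claim follows immediately from the definition of the distance, and you have spelled out that immediate verification in detail (taking $\zeta=\eta_{C_n}\overline\eta_{C_n^c}$ and checking the bijection with $S^{C_n}$).
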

\begin{proof}
This follows  immediately from the definition of the distance in $\Omega$.
\end{proof}

If $f$ is a $1$-Lipschitz function, then we have that if $\eta_{C_n}=\zeta_{C_n}$, $|f(\eta)-f(\zeta)|\leq 2^{-n}$.
Notice that we can view $\Omega_n^{\overline{\eta}}$ in \eqref{bimlem} as a copy of $S^{C_n}$ via the map
\[
\psi: S^{C_n}\to \Omega_n^{\overline{\eta}}: \alpha\mapsto \alpha_{C_n}\overline{\eta}_{C_n^c}.
\]
This means that  ordering the elements of $\Omega_n^{\overline{\eta}}$ is the same as ordering the elements of $S^{C_n}$.
The aim now  is to order the elements of the net $\Omega_n^{\overline{\eta}}$ in such a way that the distances
between successive elements in the ordering are as small as possible. Because $\Omega$ is a totally disconnected
space, we will not be able  to avoid that in this order there are distances of
$2^{-(n-1)}, 2^{-(n-2)},\ldots, 2^{-1}$. The following lemma explains the hierarchical structure of the ordering.
\begin{lemma}
There exists an ordering of $S^{C_n}$ of the following type
\begin{align*}
& \alpha^0\\
&  \alpha^{1,1},\ldots,\alpha^{1,P(n,1)}\quad \textup{(first list)}\\
& \alpha^{2,1},\ldots,\alpha^{2,P(n,2)}\quad \textup{(second list)}\\
& \qquad\quad \vdots\\
& \alpha^{n,1},\ldots, \alpha^{n,P(n,n)}\quad \textup{(}n\textup{th list)}
\end{align*}
such that for all $k,\ell\in \{0,\ldots, n\}, i\in 1,\ldots, P(n,k), j\in 1,\ldots, P(n,\ell)$, we have
\[
d(\psi(\alpha^{k,i}), \psi(\alpha^{\ell,j})) = 2^{-(n- k\vee \ell)},
\]
where $d$ is the distance defined in \eqref{def:dist}. Here $P(n,1)=|S|^{|C_n\setminus C_{n-1}|}$, 
$P(n,2)=S^{|C_n\setminus C_{n-1}|+|C_{n-1}\setminus C_{n-2}|}$, etc.
\end{lemma}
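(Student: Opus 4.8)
The plan is to construct the required ordering directly from the hierarchical structure of the cube decomposition $C_n\supset C_{n-1}\supset\cdots\supset C_0=\{0\}$, organizing $S^{C_n}$ so that the ``level'' at which two configurations first differ (reading coordinates from the outermost shell inward) determines their distance exactly. First I would introduce the shells $A_k=C_k\setminus C_{k-1}$ for $k=1,\ldots,n$, together with $A_0=\{0\}$, so that $C_n$ is the disjoint union $A_0\cup A_1\cup\cdots\cup A_n$. Under the identification $\psi$, a configuration $\alpha\in S^{C_n}$ is a tuple $(\alpha_{A_n},\alpha_{A_{n-1}},\ldots,\alpha_{A_1},\alpha_{A_0})$. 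The key observation is that for $\alpha,\alpha'\in S^{C_n}$, if $k^*$ is the largest index such that $\alpha_{A_{k^*}}\neq \alpha'_{A_{k^*}}$ (and they agree on all $A_\ell$ with $\ell>k^*$ as well as outside $C_n$, where both equal $\overline\eta$), then the smallest $\|x\|_\infty$ at which $\psi(\alpha)$ and $\psi(\alpha')$ differ lies in $A_{k^*}$, i.e.\ equals $k^*$, so $d(\psi(\alpha),\psi(\alpha'))=2^{-k^*}$. Rewriting $k^*=k\vee\ell$ when $\alpha$ comes from the $k$-th list and $\alpha'$ from the $\ell$-th list (with the convention that the $k$-th list consists of exactly those configurations whose outermost disagreement with $\alpha^0$ is at shell $A_k$) gives the claimed formula $d=2^{-(n-k\vee\ell)}$ after the indexing convention is matched up — note the statement uses $n-k\vee\ell$ because the lists are labelled by co-depth; I would reconcile this bookkeeping carefully.

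The construction itself: fix the base point $\alpha^0$ to be, say, the configuration that agrees with $\overline\eta$ on all of $C_n$ (or any fixed choice). Define the $k$-th list, for $k=1,\ldots,n$, to enumerate all $\alpha\in S^{C_n}$ such that $\alpha$ agrees with $\alpha^0$ on $A_{k+1}\cup\cdots\cup A_n$ but $\alpha_{A_k}\neq\alpha^0_{A_k}$. These lists are pairwise disjoint and, together with $\{\alpha^0\}$, exhaust $S^{C_n}$: every $\alpha\neq\alpha^0$ has a well-defined largest shell of disagreement with $\alpha^0$. The cardinality $P(n,k)$ of the $k$-th list is then $\bigl(|S|^{|A_k|}-1\bigr)\,|S|^{|A_{k-1}|+\cdots+|A_1|+|A_0|}$; this matches the pattern indicated in the statement ($P(n,1)=|S|^{|A_1|}$ up to the $-1$ and the lower-shell factor, which I would present precisely rather than schematically). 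One then checks the distance formula in three cases: two elements of the same list $k$ (they agree on shells above $k$ and both differ from $\alpha^0$ at shell $k$, but may or may not agree within shell $k$ — if they disagree within $A_k$ the distance is $2^{-k}$; if they agree on $A_k$ they must disagree at some lower shell, still giving $d\le 2^{-(k-1)}$... ), and here I see the one genuine subtlety.

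The main obstacle is precisely that, within a single list, two distinct configurations need \emph{not} be at the maximal distance $2^{-k}$: if $\alpha^{k,i}$ and $\alpha^{k,j}$ happen to agree throughout the shell $A_k$ they will differ only deeper inside, at distance strictly less than $2^{-k}$. So the lemma as I have set it up would be \emph{false} with this naive definition of the lists. The fix is to refine the ordering recursively: the $k$-th list should not be ``all configurations disagreeing with $\alpha^0$ first at shell $k$'' taken in arbitrary order, but rather should itself be built so that consecutive blocks correspond to fixing $\alpha_{A_k}$ and then recursing on $S^{C_{k-1}}$; equivalently, one orders $S^{C_n}$ lexicographically with respect to the flag $A_n,A_{n-1},\ldots,A_0$ \emph{and a fixed ordering of $S^{A_j}$ within each shell starting from the value $\alpha^0_{A_j}$}, so that the ``$k$-th list'' is reinterpreted as the set of positions in the lexicographic order whose nearest common ancestor with position $0$ is at depth $k$. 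Then any two elements labelled by the $k$-th and $\ell$-th lists have their first coordinate-disagreement exactly at shell $A_{k\vee\ell}$ \emph{by construction of the lexicographic order}, and $d=2^{-(n-k\vee\ell)}$ follows. I would therefore spend the bulk of the proof setting up this lexicographic/flag ordering cleanly (choosing for each shell $A_j$ a bijection $S^{A_j}\to\{0,1,\ldots,|S|^{|A_j|}-1\}$ sending $\alpha^0_{A_j}\mapsto 0$), defining $P(n,k)$ and the labels $\alpha^{k,i}$ in terms of it, and then verifying the distance identity is a one-line consequence of ``the first shell of disagreement of two configurations is read off from the highest-order coordinate in which their lexicographic codes differ.'' The compactness/continuity inputs are not needed here; this is a purely combinatorial statement about the metric on $S^{C_n}$.
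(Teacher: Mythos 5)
There is a genuine gap: your argument inverts the metric \eqref{def:dist}. Since $d(\omega,\omega')=2^{-k}$ with $k=\min\{\|x\|_\infty:\omega_x\neq\omega'_x\}$, two configurations are close exactly when they agree on a large \emph{central} cube, so the disagreement that determines the distance is the \emph{innermost} one. Your ``key observation'' --- that if $k^*$ is the \emph{largest} index with $\alpha_{A_{k^*}}\neq\alpha'_{A_{k^*}}$ then the smallest $\|x\|_\infty$ of disagreement equals $k^*$ --- is false: the minimum is attained in the innermost shell where they differ, which may well be $A_0$. Consequently your lists, grouped by the outermost shell of disagreement with $\alpha^0$, contain pairs at distance $1$ (take two configurations that both deviate from $\alpha^0$ at $A_k$ and nowhere further out, one of which also differs at the origin), and a list-$k$ element can itself be at distance $1$ from $\alpha^0$. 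The lexicographic refinement with $A_n$ as the most significant coordinate does not repair this, because the lexicographically leading disagreement is precisely the metrically irrelevant one. The correct construction --- and the one the paper uses --- peels $C_n$ from the outside \emph{in}: the $k$-th list consists of the configurations that agree with $\alpha^0$ on $C_{n-k}$ but not on $C_{n-k+1}$. Then any element of list $k$ and any element of list $\ell$ both agree with $\alpha^0$, hence with each other, on $C_{n-(k\vee\ell)}$, so their distance is at most $2^{-(n-k\vee\ell)}$, and this upper bound is all that the subsequent counting of $|\mathscr{F}_\epsilon|$ requires.

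Separately, you are right to flag that the equality in the displayed formula cannot hold for all pairs: even with the correct grouping, two distinct elements of the same list may agree on the shell that defines the list and differ only further out, and then their distance is \emph{strictly smaller} than the claimed value --- note smaller, not larger as you wrote, again because of the orientation of the metric. This is a harmless imprecision in the statement (``$=$'' should be ``$\leq$''), since a smaller distance only further restricts the admissible function values in the Lipschitz counting that follows. But as it stands your proposed recursion is built on the wrong grouping, produces lists whose diameters are not controlled, and would not yield the bound on $\log|\mathscr{F}_\epsilon|$ used in the rest of Section \ref{subsec:scem}.
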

\begin{proof}
We choose an arbitrary first element $\alpha^0$ in $S^{C_n}$.
The next elements form an arbitrary enumeration of the configurations
which are equal to $\alpha^0$ in $C_{n-1}$, but different in at least
one site $x\in C_{n}\setminus C_{n-1}$. There are at most $P(n,1)=|S|^{|C_n\setminus C_{n-1}|}$ such configurations.
They are all at distance $2^{-n}$ from $\alpha^0$ and from each other.
Next are the elements at distance $2^{-(n-1)}$ from $\alpha$.
These are at most $|S|^{|C_{n-1}\setminus C_{n-2}|}$ configurations associated to each configuration
in the previous list, hence in total this gives $P(n,2)=S^{|C_n\setminus C_{n-1}|+|C_{n-1}\setminus C_{n-2}|}$ configurations in the second list. And so on and so forth.
We go on like this, ``peeling'' off the cube $C_n$ by successive boundary layers $C_n\setminus C_{n-1}, C_{n-1}\setminus C_{n-2},\ldots ,\{0\}$, and end up with the configurations at distance $1/2$ from $\alpha^0$, of which  there are $P(n,n)=S^{|C_n|-1}$
\end{proof}
Now we want to make our $\epsilon$ net $\mathscr{F}_\epsilon$. We choose $n$ such that $2^{-n}\leq \epsilon\leq 2^{-(n-1)}$. We will give a function value to each $\psi(\alpha), \alpha\in S^{C_n}$, which will only depend on $\alpha$, so we identify it with a function $f: S^{C_n}\to\R$.
Because the functions will take values in $\left\{0,\frac{1}{2^n},\ldots, \frac{2^n-1}{2^n},1\right\}$, we have $2^n+1$ possibilities for the function value of $\alpha^0$.
Because we will choose the functions in $\mathscr{F}_\epsilon$ to be $1$-Lipschitz, this restricts the possible values
of the functions at $\alpha_1$. Indeed, given the function value of $\alpha^0$,
for the function values of the first list, which contains configurations which are at distance $2^{-n}\leq \epsilon $,
we have at most three possibilities, namely $f(\alpha^0)+s$ with $s\in \{-2^{-n},0,2^{-n}\}$.
Given the function values in the first list, all the elements of the second list are at distance $2^{-(n-2)}\leq 2\epsilon$ from $\alpha^0$ and from any element of the first list, so we have now $2^2+1$ possible function values, associated to any configuration of the second ``layer'', {\em i.e.}, $|S|^{|C_{n-1}\setminus C_{n-2}|}$ configurations, and so on and so forth. The number of functions we thus obtain is upper bounded by
\[
|\mathscr{F}_\epsilon|=(2^n+1) (2+1)^{P(n,1)} (2^2+1)^{P(n,2)} \cdots (2^n+1)^{P(n,n)}.
\]
Taking the logarithm of this expression gives, using the (crude) upperbound $\log(2^{n}+1)\leq n+1$
\[
\log|\mathscr{F}_\epsilon|
\leq
n+1 + P(n,1)2 + P(n,2) 3 +\cdots +(n+1)P(n,n)=:\mathscr{K}_\epsilon.
\]
It is clear that the asymptotic behavior of this expression
is dominated by the last term, {\em i.e}, we have
\[
\mathscr{K}_\epsilon\sim |S|^{(2\log(1/\epsilon)+1)^d} (\log(\tfrac1\epsilon)+1),
\]
where $a_\epsilon \sim b_\epsilon$ means that $a_\epsilon/b_\epsilon \to 1$ as $\epsilon$ goes
to $0$.

\begin{remark}\label{rem:KT}
Let us stress that we cannot obtain the previous estimate by a direct application of the standard
results on $\epsilon$-entropy. To be more specific, our estimate does not follow from Theorem XV in \cite{KT} for the totally disconnected metric space $\Omega$. The problem stems from the fact that we cannot metrically embed $\Omega$ into a finite-dimensional parallelepiped, except in dimension $d=1$.
\end{remark}

We now analyse how the bound \eqref{forro} behaves.
By lemma \ref{pizza} we have that the constant $D$ in this bound is of the form
$D'=D_1/|\Lambda|$, where $D_1$ is independent of $|\Lambda|$. Our aim now is to
extract the leading order behavior in $|\Lambda|$ of the optimal found in \eqref{forro}, where
we replace $ \log |\mathscr{F}_\epsilon|$ by $\mathscr{K}_\epsilon$.
I.e., we compute
\begin{equation}\label{upperboend}
B(|\Lambda|)=2 \inf_{\epsilon >0}\left(\epsilon + \sqrt{ \frac{D_1 \mathscr{K}_\epsilon}{|\Lambda|}}\, \right).
\end{equation}
Let us abbreviate $\log(1/\epsilon)=v(\epsilon)$.
The optimal $\epsilon=\epsilon^*$ is the solution of
\[
\frac12 \log|\Lambda| +\frac12\log (1/D_1)
= v(\epsilon) +\frac12 \log(|S|) (2 v(\epsilon))^d+\chi(\epsilon),
\]
where $\chi(\epsilon)$ is of lower order as $\epsilon$ goes to $0$.
In order to collect the leading order behavior of $B(|\Lambda|)$ in $|\Lambda|$ on the logarithmic scale, we will therefore omit $\chi(\epsilon)$ in this equation, which will lead to lower order factors in the asymptotic behavior of $B(|\Lambda|)$. We will also omit the term $\frac12\log (1/D_1)$ for the same reason.\\
Let us now introduce two notions of asymptotic comparison. For two strictly positive sequences $(a_n)$ and $(b_n)$, we write $a_n\asymp b_n$ if $\frac{\log a_n}{\log b_n}\to 1$ as $n\to\infty$, and $a_n \preceq b_n$ if
$\limsup_n \frac{\log a_n}{\log b_n}\leq 1$. For instance we have $4n^{-1/2}\log(n)\log(\log(n))\asymp n^{-1/2}$, and
$n^{2n} e^{n} \preceq n^{3n}$. Similarly, for two sequences $(a_\Lambda)$ and $(b_\Lambda)$ indexed by finite
subsets of $\Zd$ we denote $a_\Lambda \asymp b_\Lambda$ if, for every sequence $(\Lambda_n)$ such that $|\Lambda_n|\to +\infty$ as $n\to+\infty$, we have $\frac{\log a_{\Lambda_n}}{\log b_{\Lambda_n}}\to 1$. Analogously,
we define $a_\Lambda \preceq b_\Lambda$. 
\\
As a consequence, for $\epsilon=\epsilon^*$ we find that both terms in the rhs of \eqref{upperboend} are of the same order, and hence on this level of roughness, the behavior of $B(|\Lambda|)$ is the same as that of $\epsilon^*$.

Proceeding like this, we find the following leading order behavior of $B(|\Lambda|)$ as a function of the dimension.
\begin{enumerate}
\item Dimension $d=1$.
\[
\epsilon^*\asymp |\Lambda|^{-\frac{1}{2}(1+\log |S|)^{-1}}.
\]
\item Dimension $d\geq 2$.
\[
\epsilon^*\asymp \exp\left(-\frac12 \left(\frac{\log|\Lambda|}{\log|S|}\right)^{1/d}\right).
\]
Notice that this does not give the previous bound when we plug in $d=1$ because (only) for $d=1$
the additional tern $v(\epsilon)$ is of the same order as the second term  $\frac12 (\log|S|) (2 v(\epsilon)+1)^d$.
\end{enumerate}
As a conclusion we obtain the following asymptotic estimates.
\begin{theorem}\label{thm:bKd}
Let $\nu$ be a probability measure on $\Omega$ satisfying $\gcb{D}$. Then
\[
\E_\nu \left[\distk\left(\mathcal{E}_\Lambda(\cdot),\nu\right)\right]
\preceq
\begin{cases}
|\Lambda|^{-\frac{1}{2}(1+\log |S|)^{-1}} & \text{if}\quad d=1\\
\exp\left(-\frac12 \left(\frac{\log|\Lambda|}{\log|S|}\right)^{1/d}\right) & \text{if}\quad d\geq 2.
\end{cases}
\]
\end{theorem}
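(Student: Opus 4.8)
The plan is to combine the ingredients assembled immediately above the statement. Recall from the lemma preceding \eqref{forro} that for every $\epsilon>0$ and every good $\epsilon$-net $\mathscr{F}_\epsilon$ one has
\[
\E_\nu\big[\distk(\mathcal{E}_\Lambda(\cdot),\nu)\big] = \E_\nu\Big[\sup_{f\in\mathscr{F}} X^\Lambda_f\Big] \leq 2\Big(\epsilon + \sqrt{D'\,\log|\mathscr{F}_\epsilon|}\Big),
\]
where, by Lemma \ref{lipapproxlem} together with the estimate \eqref{pouic} of Lemma \ref{pizza}, the constant $D'$ can be taken of the form $D_1/|\Lambda|$ with $D_1$ depending only on $D$ and $d$, not on $\Lambda$. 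Moreover the explicit net constructed by ``peeling off'' the cube $C_n$ layer by layer (with $2^{-n}\leq\epsilon\leq 2^{-(n-1)}$) satisfies $\log|\mathscr{F}_\epsilon|\leq\mathscr{K}_\epsilon$ with $\mathscr{K}_\epsilon\sim |S|^{(2\log(1/\epsilon)+1)^d}(\log(1/\epsilon)+1)$ as $\epsilon\to 0$. Substituting and optimizing over $\epsilon$ reduces the theorem to estimating the leading-order behavior of $B(|\Lambda|)=2\inf_{\epsilon>0}\big(\epsilon+\sqrt{D_1\mathscr{K}_\epsilon/|\Lambda|}\big)$, which is exactly the quantity \eqref{upperboend}.

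To analyze $B(|\Lambda|)$, I would use that $\epsilon\mapsto\epsilon$ is increasing while $\epsilon\mapsto\sqrt{D_1\mathscr{K}_\epsilon/|\Lambda|}$ is decreasing, so the infimum is essentially attained at the balance point $\epsilon=\epsilon^*$ where $(\epsilon^*)^2\asymp D_1\mathscr{K}_{\epsilon^*}/|\Lambda|$. Writing $v(\epsilon)=\log(1/\epsilon)$ and taking logarithms, this becomes — after discarding the additive constant $\tfrac12\log(1/D_1)$ and the lower-order correction $\chi(\epsilon)$, which as explained in the text only affect sub-logarithmic factors —
\[
\tfrac12\log|\Lambda| = v(\epsilon) + \tfrac12\log|S|\,(2v(\epsilon))^d.
\]
At $\epsilon=\epsilon^*$ both competing terms of $B$ have the same logarithmic order, so $B(|\Lambda|)\asymp\epsilon^*$; since the theorem asserts only a $\preceq$ bound, it suffices to extract $\epsilon^*$ from this equation in each dimension.

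For $d\geq 2$ the right-hand side is dominated by $\tfrac12\log|S|\,(2v)^d$, so $v(\epsilon^*)\sim\tfrac12(\log|\Lambda|/\log|S|)^{1/d}$ and hence
\[
\epsilon^* = e^{-v(\epsilon^*)} \asymp \exp\!\Big(-\tfrac12\big(\tfrac{\log|\Lambda|}{\log|S|}\big)^{1/d}\Big),
\]
which is the second case. For $d=1$ the linear term $v$ is of the same order as $\tfrac12\log|S|\,(2v)$, so the equation reads $(1+\log|S|)\,v(\epsilon^*)\sim\tfrac12\log|\Lambda|$, i.e. $\epsilon^*\asymp|\Lambda|^{-\frac12(1+\log|S|)^{-1}}$, the first case. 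Plugging $\epsilon^*$ (or the dyadic value $2^{-n}$ just below it) back into the displayed inequality yields the claimed bounds.

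The main obstacle is bookkeeping rather than conceptual: one must check that replacing $\log|\mathscr{F}_\epsilon|$ by its asymptotic equivalent $\mathscr{K}_\epsilon$ and dropping $\chi(\epsilon)$ and $\tfrac12\log(1/D_1)$ perturb $\log B(|\Lambda|)$ only by a factor tending to $1$ along any sequence with $|\Lambda|\to\infty$; this is precisely what legitimizes working with the relations $\asymp$ and $\preceq$ and why the statement is asymptotic rather than quantitative in every finite volume. A secondary point is that the balancing value $\epsilon^*$ controls the infimum from above: since $B(|\Lambda|)$ is an infimum, any admissible $\epsilon$ already gives a valid upper bound, and the monotonicity of the two terms shows the choice $\epsilon^*$ is of optimal logarithmic order.
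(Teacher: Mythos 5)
Your proposal is correct and follows essentially the same route as the paper: the text preceding the theorem is itself the proof, combining the good $\epsilon$-net bound \eqref{forro} with $D'=D_1/|\Lambda|$ from Lemmas \ref{lipapproxlem} and \ref{pizza}, the hierarchical net giving $\log|\mathscr{F}_\epsilon|\leq\mathscr{K}_\epsilon\sim|S|^{(2\log(1/\epsilon)+1)^d}(\log(1/\epsilon)+1)$, and the balancing of the two terms in \eqref{upperboend} on the logarithmic scale. Your treatment of the two dimensional regimes (the linear term $v(\epsilon)$ being comparable to $\frac12\log|S|(2v(\epsilon))^d$ only when $d=1$) and your remark that any admissible $\epsilon$ already yields a valid upper bound match the paper's argument.
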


\subsubsection{Case 2: Moment concentration bound case}

Let us now see what can be done when exponential moments do not exist, {\em i.e.}, if we do  not have GCB.
We call then an $\epsilon$-net $\mathscr{F}_\epsilon$ good if  we have
\begin{enumerate}
\item {\bf The $\epsilon$-net property.} For  all $g\in\mathscr{F}$ there exists $f\in \mathscr{F}_\epsilon$ such that
$\|f- g\|_\infty\leq \epsilon$.
\item {\bf The $\epsilon$-Moment bound.} For all $f\in\mathscr{F}_\epsilon$
\begin{equation}
\| X^\Lambda_f\|_{L^{2p}(\nu)}\leq \epsilon + \frac{C_{2p}^{1/2p} }{\sqrt{|\Lambda|}}.
\end{equation}
\end{enumerate}
Then going through the same reasoning, as before (but with the function $x\mapsto e^{\lambda x}$ replaced by
$x\mapsto |x|^{2p}$) we obtain the estimate
\begin{equation}\label{strumpf}
\E_\nu\left[\,\sup_{f\in\mathscr{F}} X^\Lambda_f \right]
\leq \epsilon +
|\mathscr{F}_\epsilon|^{1/2p} \left(\epsilon + \frac{C_{2p}^{1/2p}}{\sqrt{|\Lambda|}}\right).
\end{equation}
As in the previous subsection, we have $|\mathscr{F}_\epsilon|\asymp \exp (\exp(\alpha (\log(1/\epsilon)^d)))$, with
$\alpha=2^d \log|S|$.
Let us furthermore assume that we have the bound
\begin{equation}\label{pouicpouc}
C_{2p}\leq p^{\kappa 2p}
\end{equation}
for some $\kappa\geq 1/2$. In particular, for the low-temperature Ising model, we have $\kappa=1$ (see Remark \ref{rem-moments-lowT}), whereas we have $\kappa=1/2$ in the case of a Gaussian concentration bound.
Then we analyse as before, {\em i.e.}, on the level of logarithmic equivalence, the bounds we obtain from \eqref{strumpf}.
\begin{enumerate}
\item Dimension $d=1$. Then we have $|\mathscr{F}_\epsilon|\asymp \exp (\epsilon^{-\alpha})$. We find the upperbound
\[
B(|\Lambda|)\preceq |\Lambda|^{-\frac{1}{2(\alpha \kappa +1)}}
\]
\item Dimension $d\geq 2$ we find
\[
B(|\Lambda|) \preceq \exp \left(-\left(\frac{\log|\Lambda|}{2\alpha\kappa}\right)^{1/d}\right)
\]
\end{enumerate}
As a conclusion we obtain the following asymptotic estimates.
\begin{theorem}\label{thm:bKdbis}
Let $\nu$ be a probability measure on $(\Omega,\mathfrak{B})$ satisfying $\mcb{2p,C_{2p}}$ for all $p\in\N$. 
Moreover assume that \eqref{pouicpouc} holds.
Then
\[
\E_\nu \left[\distk\left(\mathcal{E}_\Lambda(\cdot),\nu\right)\right]
\preceq
\begin{cases}
|\Lambda|^{-\frac{1}{2(\alpha \kappa +1)}} & \text{if}\quad d=1\\
\exp \left(-\left(\frac{\log|\Lambda|}{2\alpha\kappa}\right)^{1/d}\right) & \text{if}\quad d\geq 2.
\end{cases}
\]
\end{theorem}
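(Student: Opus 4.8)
The plan is to run the same $\epsilon$-net (chaining) scheme as in Case~1, but with exponential moments replaced by $L^{2p}$-moments, and then to optimize jointly over the net scale $\epsilon$ and the moment order $p$. The starting point is the identity $\distk(\mathcal{E}_\Lambda(\omega),\nu)=\sup_{f\in\mathscr{F}}X^\Lambda_f(\omega)$, so that it suffices to control $\E_\nu\big[\sup_{f\in\mathscr{F}}X^\Lambda_f\big]$, where without loss of generality the functions in $\mathscr{F}$ take values in $[0,1]$.

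First I would check that the finite collection $\mathscr{F}_\epsilon$ constructed in Case~1 is a good $\epsilon$-net in the sense of Case~2. The $\epsilon$-net property is unchanged; for the moment bound I would argue exactly as in the proof of Lemma~\ref{pizza}: for a $1$-Lipschitz $f$ one has $\delta_z(S_\Lambda f)\le\sum_{x\in\Lambda}2^{-\|z-x\|_\infty}$, hence $\|\ushort{\delta}(S_\Lambda f)\|_2^2\le c_d\,|\Lambda|$ by Young's inequality. Plugging this into the moment concentration bound \eqref{eq:momentb} gives $\E_\nu\big[(S_\Lambda f-\E_\nu[S_\Lambda f])^{2p}\big]\le C_{2p}\,(c_d|\Lambda|)^p$, that is $\|X^\Lambda_f\|_{L^{2p}(\nu)}\le\sqrt{c_d}\,C_{2p}^{1/2p}/\sqrt{|\Lambda|}$ for every $f\in\mathscr{F}_\epsilon$ (the fixed constant $\sqrt{c_d}$ is immaterial on the logarithmic scale). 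A union bound over $\mathscr{F}_\epsilon$ taken inside the $L^{2p}$-norm, together with Jensen's inequality, then produces estimate~\eqref{strumpf}.

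It then remains the purely analytic task of optimizing the right-hand side of \eqref{strumpf} over $\epsilon>0$ and $p\in\N$, on the level of logarithmic equivalence, using $\log|\mathscr{F}_\epsilon|\asymp\exp\!\big(\alpha(\log(1/\epsilon))^d\big)$ with $\alpha=2^d\log|S|$ together with the hypothesis \eqref{pouicpouc}, which yields $C_{2p}^{1/2p}\le p^{\kappa}$. The idea is to take $p$ with $\log p\asymp\alpha(\log(1/\epsilon))^d$, so that the prefactor $|\mathscr{F}_\epsilon|^{1/2p}$ stays bounded while $C_{2p}^{1/2p}/\sqrt{|\Lambda|}\preceq\exp\!\big(\kappa\alpha(\log(1/\epsilon))^d\big)/\sqrt{|\Lambda|}$; balancing this term against $\epsilon$ gives $\tfrac12\log|\Lambda|\asymp\log(1/\epsilon)+\kappa\alpha(\log(1/\epsilon))^d$ for the optimal $\epsilon^*$. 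When $d=1$ both terms on the right contribute and one finds $\epsilon^*\asymp|\Lambda|^{-1/(2(\alpha\kappa+1))}$; when $d\ge2$ the term $\kappa\alpha(\log(1/\epsilon))^d$ dominates and one finds $\epsilon^*\asymp\exp\!\big(-(\log|\Lambda|/(2\alpha\kappa))^{1/d}\big)$. Since at $\epsilon=\epsilon^*$ the three contributions to \eqref{strumpf} are all of the same logarithmic order as $\epsilon^*$, one obtains $\E_\nu[\distk(\mathcal{E}_\Lambda(\cdot),\nu)]\asymp\epsilon^*$, and in particular the announced $\preceq$-bounds.

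I expect the only delicate point to be this last optimization: one has to verify that the lower-order factors discarded along the way --- the constant $\sqrt{c_d}$, the precise constant implicit in $\log|\mathscr{F}_\epsilon|\asymp\cdots$, the subleading terms in the explicit bound for $\log|\mathscr{F}_\epsilon|$ (i.e.\ in $\mathscr{K}_\epsilon$), and the integer rounding of $p$ --- do not affect the leading behaviour in the sense of $\preceq$, and that the chosen $p$ is $\ge1$ for $|\Lambda|$ large, which is the only range relevant to an asymptotic statement. Everything else is a transcription of the Case~1 argument, with $x\mapsto e^{\lambda x}$ replaced by $x\mapsto|x|^{2p}$.
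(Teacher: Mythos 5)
Your proposal follows essentially the same route as the paper: the same good $\epsilon$-net $\mathscr{F}_\epsilon$ from Case~1, the same substitution of the $L^{2p}$-moment bound (via the oscillation estimate of Lemma \ref{pizza}) for the exponential moment bound to arrive at \eqref{strumpf}, and the same joint optimization over $\epsilon$ and $p$ on the scale of logarithmic equivalence, with the choice $\log p\asymp\alpha(\log(1/\epsilon))^d$ making $|\mathscr{F}_\epsilon|^{1/2p}$ bounded. The paper leaves this optimization largely implicit, and your explicit balancing equation $\tfrac12\log|\Lambda|\asymp\log(1/\epsilon)+\kappa\alpha(\log(1/\epsilon))^d$ reproduces exactly the stated exponents in both the $d=1$ and $d\geq 2$ cases.
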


Notice that when $\kappa=1/2$, this theorem is exactly the bound we obtained in Theorem \ref{thm:bKd}.

\section{Application 3: Fluctuations in the Shannon-McMillan-Breiman theorem and its analog for relative entropy}\label{sec:SMB}

If $\nu$ is an ergodic probability measure, the following holds:
\[
\lim_{n\to\infty}-\frac{\log\nu(\EuScript{C}_n(\omega))}{(2n+1)^d}=h(\nu)\quad\text{for}\;\nu\text{-a.e.} \;\omega.
\]
This is usually referred to as the Shannon-Millan-Breiman theorem for random fields and was proved in \cite{kieffer}.
If $\Phi\in\mathscr{B}_T$ then we have
\[
\lim_{n\to\infty}-\frac{1}{(2n+1)^d}\log\frac{\nu(\EuScript{C}_n(\omega))}
{\mu_\Phi(\EuScript{C}_n(\omega))}=h(\nu|\mu_\Phi)\quad\text{for}\;\nu\text{-a.e.} \;\omega
\]
where $\mu_\Phi$ is any shift-invariant Gibbs measure associated with $\Phi$, and where $h(\nu|\mu_\Phi)$
is the relative entropy (per site) of $\nu$ with respect to $\mu_\Phi$ (cf. \eqref{def:relent}, \eqref{formule-rel-entropy}). This result can be deduced using the Shannon-Millan-Breiman theorem, \eqref{eq:control-cyl}, and the multidimensional ergodic theorem \cite{tempelman} applied to the measure $\nu$. 
Our goal is to control the fluctuations of both quantities around their respective limits when $\nu$ is a Gibbs measure. We have the following results.

\begin{theorem}\label{thm:SMB}
\leavevmode\\ 
Let $\Phi\in\mathscr{B}_T$ be a potential whose specification $\bgamma^{\Phi}$ satisfies 
Dobrushin's uniqueness condition \eqref{def:dobcoef}. Then there exists $u_0>0$ such that
\begin{align*}
& \gmu_\Phi\!\left\{\!\omega\in \Omega :
\left|\frac{-\log\gmu_\Phi(\EuScript{C}_n(\omega))}{(2n+1)^d}-
\E_{\mu_\Phi}\!\!\left[\frac{-\log\mu_\Phi(\EuScript{C}_n(\cdot))}{(2n+1)^d}\right]\right| \geq \frac{u}{(2n+1)^{\frac{d-1}{2}}}\! \right\}\\
&  \leq 2\, \exp\left(- \frac{ (1-\mathfrak{c}(\bgamma^{\Phi}))^2}{8\opnorm{\Phi}^2}\, (2n+1)\, u^2\right)
\end{align*}
for all $n\in\N$ and for all $u\geq u_0$. Suppose, in addition to Dobrushin's uniqueness condition, that 
\eqref{cond-decay} holds, then there exists $u_0>0$ such that
\begin{align*}
\MoveEqLeft \gmu_\Phi\left\{\omega\in \Omega :
\Big|\frac{-\log\gmu_\Phi(\EuScript{C}_n(\omega))}{(2n+1)^d}-h(\mu_\Phi)\Big| \geq \frac{u}{(2n+1)^{p(d)}} \right\}\\
& \leq 2\, \exp\left(-\frac{ (1-\mathfrak{c}(\bgamma^{\Phi}))^2}{32\opnorm{\Phi}^2}\, (2n+1)\, u^2\right)
\end{align*}
for all $n\in\N$ and for all $u\geq u_0$, where
\[
p(d)=
\begin{cases}
\frac12 & \textup{if}\; d=2\\
1 & \textup{if}\; d\geq 3.
\end{cases}
\]
\end{theorem}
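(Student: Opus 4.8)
The plan is to apply the Gaussian concentration bound (Theorem \ref{thm:gemb-dob}) to the function $F_n(\omega)=-\log\gmu_\Phi(\EuScript{C}_n(\omega))$ and then, for the second statement, to replace the random centering $\E_{\mu_\Phi}[F_n]/(2n+1)^d$ by the deterministic limit $h(\mu_\Phi)$ at the cost of a surface-order error. First I would observe that $F_n$ is a cylindrical function depending only on the coordinates in $C_n$. The essential input is a uniform bound on the oscillations $\delta_x(F_n)$. For $x\in C_n$, changing $\omega_x$ changes $\gmu_\Phi(\EuScript{C}_n(\omega))$ by a bounded multiplicative factor: indeed, using the DLR equations \eqref{eq:dlr} to write $\gmu_\Phi(\EuScript{C}_n(\omega))$ as an average over boundary conditions of $\bgamma^{\Phi}_{C_n}(\omega_{C_n}|\eta)$ and then invoking the estimate \eqref{eq:bruit} with $\Lambda=C_n$, $\Lambda'=\{x\}$, one gets $\delta_x(F_n)\leq 2\opnorm{\Phi}$ for every $x\in C_n$, and $\delta_x(F_n)=0$ otherwise. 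Hence $\|\ushort{\delta}(F_n)\|_2^2\leq 4\opnorm{\Phi}^2 (2n+1)^d$. Feeding this into \eqref{eq:expcon2} with $D=\frac{1}{2(1-\mathfrak{c}(\bgamma^{\Phi}))^2}$, rescaling the deviation threshold by $(2n+1)^d$, and then by $(2n+1)^{(d-1)/2}$ (i.e. replacing $u$ by $u(2n+1)^d/(2n+1)^{(d-1)/2}=u(2n+1)^{(d+1)/2}$), yields the first displayed inequality after collecting the powers of $(2n+1)$ in the exponent. The constant $u_0$ is needed only so that the bound is nontrivial (the right-hand side $<1$).

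For the second statement I would use \eqref{eq:control-cyl}, which holds under the extra assumption \eqref{cond-decay}: it gives
\[
\left|\frac{-\log\gmu_\Phi(\EuScript{C}_n(\omega))}{(2n+1)^d}-P(\Phi)-\frac{S_n f_\Phi(\omega)}{(2n+1)^d}\right|\leq \frac{C_\Phi}{(2n+1)}
\]
for all $\omega$. Taking $\nu=\mu_\Phi$-expectations and using the variational principle \eqref{VP}, $P(\Phi)=h(\mu_\Phi)-\E_{\mu_\Phi}[f_\Phi]$, we see that $\E_{\mu_\Phi}\big[(2n+1)^{-d}F_n\big]$ differs from $h(\mu_\Phi)$ by at most $O((2n+1)^{-1})$. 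Therefore the event in the theorem is contained, up to this deterministic shift, in the event controlled by the first part (with the threshold adjusted), and a triangle inequality splits the deviation into the fluctuation term plus the $O((2n+1)^{-1})$ bias. The bias is absorbed into the threshold $u/(2n+1)^{p(d)}$ provided $p(d)\leq 1$; the two cases $d=2$ and $d\geq 3$ in the definition of $p(d)$ reflect that for $d=2$ the surface-order term $(2n+1)^{-1}$ in \eqref{eq:control-cyl} already dominates the natural fluctuation scale $(2n+1)^{-(d+1)/2}=(2n+1)^{-3/2}$, so one can only claim concentration on the coarser scale $(2n+1)^{-1/2}$, whereas for $d\geq 3$ one keeps $(2n+1)^{-1}$. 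Choosing $u_0$ large enough that $u_0/(2n+1)^{p(d)}$ exceeds twice the bias $C_\Phi/(2n+1)$ for all $n$ (using $p(d)\leq 1$) makes the reduction to the first part legitimate, and the factor-$2$ adjustment in the threshold is what turns the constant $8$ in the exponent into $32$.

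The main obstacle is the oscillation estimate $\delta_x(F_n)\leq 2\opnorm{\Phi}$: one must be careful that the bound on ratios of specifications in \eqref{eq:bruit} transfers to ratios of the measure $\gmu_\Phi(\EuScript{C}_n(\cdot))$ of the cylinder, which requires writing $\gmu_\Phi(\EuScript{C}_n(\omega))=\int \bgamma^{\Phi}_{C_n}(\omega_{C_n}|\eta)\,\dd\gmu_\Phi(\eta)$ and noting that the uniform (in $\eta$) pointwise bound on the integrand ratio passes to the integrals. Everything else — Markov's inequality, the variational principle, and bookkeeping of powers of $(2n+1)$ — is routine, and the only genuinely delicate point beyond that is matching the two regimes $d=2$ versus $d\geq 3$ against the surface-order error in \eqref{eq:control-cyl}.
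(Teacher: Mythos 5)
Your proposal is correct and follows essentially the same route as the paper: the oscillation bound $\delta_x\big(-\log\mu_\Phi(\EuScript{C}_n(\cdot))\big)\leq 2\opnorm{\Phi}$ obtained from the DLR equations together with \eqref{eq:bruit}, then Theorem \ref{thm:gemb-dob} with the rescaled threshold, and, for the second part, replacement of the expectation by $h(\mu_\Phi)$ via \eqref{eq:control-cyl} and the variational principle, with the surface-order bias absorbed by enlarging $u_0$. (Your heuristic for $p(2)=\tfrac12$ is slightly off — for $d=2$ the bias $(2n+1)^{-1}$ is actually \emph{smaller} than the scale $(2n+1)^{-1/2}$, and the real constraints are $p(d)\leq 1$ for bias absorption and $p(d)\leq (d-1)/2$ to retain the factor $(2n+1)$ in the exponent — but this does not affect the validity of the argument you give.)
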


\begin{proof}
For each $n\in\N$, the function $\omega\mapsto F(\omega)=-\log\mu_\Phi(\EuScript{C}_n(\omega))$ is a local function
(with dependence set $C_n$). We apply \eqref{eq:dlr} with $A=\EuScript{C}_n(\omega)$ and $\Lambda=C_n$ which gives
\begin{equation}\label{eq:mes-cyl}
\mu_\Phi(\EuScript{C}_n(\omega))=\int \dd \mu_\Phi(\eta) \, \bgamma^{\Phi}_{C_n}(\omega|\eta).
\end{equation}
Let $x\in C_n$, and $\omega,\tilde{\omega}\in\Omega$ such that $\omega_y=\tilde{\omega}_y$ for
all $y\neq x$. We want to control 
\[
\log \frac{\mu_\Phi(\EuScript{C}_n(\omega))}{\mu_\Phi(\EuScript{C}_n(\tilde{\omega}))}
\]
Using \eqref{eq:mes-cyl}, \eqref{eq:spe} and \eqref{eq:bruit} we obtain
\[
\frac{\mu_\Phi(\EuScript{C}_n(\omega))}{\mu_\Phi(\EuScript{C}_n(\tilde{\omega}))}=
\frac
{
\mathlarger{\int} \dd \mu_\Phi(\eta) \,
\mathlarger{\frac{\bgamma^{\Phi}_{C_n}(\omega|\eta)}{\bgamma^{\Phi}_{C_n}(\tilde{\omega}|\eta)}} \, \bgamma^{\Phi}_{C_n}(\tilde{\omega}|\eta)
}
{
\mathlarger{\int} \dd \mu_\Phi(\eta) \, \bgamma^{\Phi}_{C_n}(\tilde{\omega}|\eta)
}
\leq \exp(2 \opnorm{\Phi}).
\]
Hence
\[
\left|\log \frac{\mu_\Phi(\EuScript{C}_n(\omega))}{\mu_\Phi(\EuScript{C}_n(\tilde{\omega}))}\right| \leq 
2\opnorm{\Phi}
\]
which immediately implies that
\[
\delta_x(F)\leq 2\opnorm{\Phi}
\]
for all $x\in C_n$ ($\delta_x(F)=0$ for all $x\in\Zd\backslash C_n$). 
The first statement then follows at once by applying Theorem \ref{thm:gemb-dob} and rescaling $u$. 
If one can control the measure of cylinders as in \eqref{eq:control-cyl}, we can obtain a good estimate for the expectation of $-\log\mu_\Phi(\EuScript{C}_n(\omega))$ and get the second statement. 
Since $\Phi$ satisfies \eqref{cond-decay} we have \eqref{eq:control-cyl}, hence we obtain
\begin{equation}\label{esp-SMB}
\left|\frac{1}{(2n+1)^d} \, \E_{\mu_\Phi}[-\log\mu_\Phi(\EuScript{C}_n(\omega))]-h(\gmu_\Phi)\right|
\leq \frac{C_\Phi}{n}
\end{equation}
where we used the variational principle \eqref{VP}. Notice that the bound is independent of $d$.
The announced inequalities follow with $u_0=C_\Phi$.
\end{proof}

Following the same train of thought as in the previous theorem, we obtain the companion result for relative entropy.

\begin{theorem}\label{thm:rel-ent}
\leavevmode\\ 
Let $\Phi\in\mathscr{B}_T$ be a potential whose specification $\bgamma^{\Phi}$ satisfies 
Dobrushin's uniqueness condition \eqref{def:dobcoef}, and let $\Psi\in\mathscr{B}_T$ satisfying \eqref{cond-decay}. 
Let $\mu_\Psi$ be any shift-invariant Gibbs measure associated with $\Psi$.
Then there exists $u_0>0$ such that
\begin{align*}
& \gmu_\Phi \bigg\{\omega\in \Omega :
\left|\frac{1}{(2n+1)^d}\log\frac{\gmu_\Phi(\EuScript{C}_n(\omega))}{\gmu_\Psi(\EuScript{C}_n(\omega))}
- \E_{\mu_\Phi}\!\!\left[\frac{1}{(2n+1)^d}\log\frac{\gmu_\Phi(\EuScript{C}_n(\omega))}{\gmu_\Psi(\EuScript{C}_n(\omega))}\right]\right| \\
&   \hspace{10.5cm} \geq \frac{u}{(2n+1)^{\frac{d-1}{2}}}\! \bigg\}\\
&  \leq 2\, \exp\left(- \frac{ (1-\mathfrak{c}(\bgamma^{\Phi}))^2}{8\big(\opnorm{\Phi}+\opnorm{\Psi}\big)^2}\, (2n+1)\, u^2\right)
\end{align*}
for all $n\in\N$ and for all $u\geq u_0$. Suppose, in addition to Dobrushin's uniqueness condition, that 
\eqref{cond-decay} holds for $\Phi$. Then there exists $u_0>0$ such that
\begin{align*}
\MoveEqLeft \gmu_\Phi\left\{\omega\in \Omega :
\left|\frac{1}{(2n+1)^d}\log\frac{\gmu_\Phi(\EuScript{C}_n(\omega))}{\gmu_\Psi(\EuScript{C}_n(\omega))}
-h(\mu_\Phi|\mu_\Psi)
\right| \geq \frac{u}{(2n+1)^{p(d)}} \right\}\\
& \leq 2\, \exp\left(-\frac{ (1-\mathfrak{c}(\bgamma^{\Phi}))^2}{32\big(\opnorm{\Phi}+\opnorm{\Psi}\big)^2}\, (2n+1)\, u^2\right)
\end{align*}
for all $n\in\N$ and for all $u\geq u_0$, where $p(d)$ is defined as in the previous theorem.
\end{theorem}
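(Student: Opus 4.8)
The plan is to follow verbatim the strategy of the proof of Theorem~\ref{thm:SMB}, now carried out for the local function
\[
F(\omega)=\log\frac{\mu_\Phi(\EuScript{C}_n(\omega))}{\mu_\Psi(\EuScript{C}_n(\omega))},
\]
which has dependence set $C_n$ (it is bounded because the finitely many cylinders $\EuScript{C}_n(\omega)$ all carry strictly positive $\mu_\Phi$- and $\mu_\Psi$-mass, by the DLR equations and positivity of the specifications). Writing $F=\log\mu_\Phi(\EuScript{C}_n(\cdot))-\log\mu_\Psi(\EuScript{C}_n(\cdot))$ and using $\delta_x(g-h)\le\delta_x(g)+\delta_x(h)$, the oscillation estimate established in the proof of Theorem~\ref{thm:SMB}---which uses only \eqref{eq:dlr} (with $A=\EuScript{C}_n(\omega)$, $\Lambda=C_n$), \eqref{eq:spe} and \eqref{eq:bruit}, and is therefore valid for an arbitrary Gibbs measure, in particular for $\mu_\Psi$ although uniqueness is not assumed for $\Psi$---gives $\delta_x\big(\log\mu_\Phi(\EuScript{C}_n(\cdot))\big)\le2\opnorm{\Phi}$ and $\delta_x\big(\log\mu_\Psi(\EuScript{C}_n(\cdot))\big)\le2\opnorm{\Psi}$ for $x\in C_n$, and $0$ off $C_n$. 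Hence $\delta_x(F)\le2(\opnorm{\Phi}+\opnorm{\Psi})$ for $x\in C_n$ and $\delta_x(F)=0$ otherwise, so $\|\ushort{\delta}(F)\|_2^2\le4(\opnorm{\Phi}+\opnorm{\Psi})^2(2n+1)^d$. Inserting this into Theorem~\ref{thm:gemb-dob} and the two-sided bound \eqref{eq:expcon2} of Proposition~\ref{prop-exp-dev}, applied to $\mu_\Phi$, and replacing $u$ by $u(2n+1)^{(d+1)/2}$, yields the first displayed inequality of the theorem (which in fact holds for every $u>0$; the restriction $u\ge u_0$ is only needed for the second part).

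For the second part one must replace the centering by $h(\mu_\Phi|\mu_\Psi)$. Since $F$ depends only on $\omega_{C_n}$, one has $\E_{\mu_\Phi}[F]=\sum_{\alpha\in\Omega_n}(\mu_\Phi)_n(\alpha)\log\frac{(\mu_\Phi)_n(\alpha)}{(\mu_\Psi)_n(\alpha)}=H_n\big((\mu_\Phi)_n|(\mu_\Psi)_n\big)$, so $\E_{\mu_\Phi}[F]/(2n+1)^d\to h(\mu_\Phi|\mu_\Psi)$ by \eqref{def:relent}. The quantitative ingredient I would supply is a surface-order rate: splitting $\E_{\mu_\Phi}[F]=\E_{\mu_\Phi}\big[-\log\mu_\Psi(\EuScript{C}_n(\cdot))\big]-\E_{\mu_\Phi}\big[-\log\mu_\Phi(\EuScript{C}_n(\cdot))\big]$, I would apply the cylinder control \eqref{eq:control-cyl} to $\Psi$ (legitimate since $\Psi$ satisfies \eqref{cond-decay}) and take $\E_{\mu_\Phi}$, using shift-invariance of $\mu_\Phi$, to obtain $\big|\E_{\mu_\Phi}[-\log\mu_\Psi(\EuScript{C}_n(\cdot))]-(2n+1)^d(P(\Psi)+\E_{\mu_\Phi}[f_\Psi])\big|\le C_\Psi(2n+1)^{d-1}$, and I would use the bound \eqref{esp-SMB} from the proof of Theorem~\ref{thm:SMB} for the $\mu_\Phi$-term (legitimate since $\Phi$ satisfies \eqref{cond-decay} by the extra hypothesis). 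Combining these with the entropy identity \eqref{formule-rel-entropy}, i.e. $P(\Psi)+\E_{\mu_\Phi}[f_\Psi]-h(\mu_\Phi)=h(\mu_\Phi|\mu_\Psi)$, gives $\big|\E_{\mu_\Phi}[F]/(2n+1)^d-h(\mu_\Phi|\mu_\Psi)\big|\le C/(2n+1)$ for a constant $C=C(\Phi,\Psi)>0$.

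It then remains to assemble the two parts by a triangle inequality. On the event $\big|F/(2n+1)^d-h(\mu_\Phi|\mu_\Psi)\big|\ge u/(2n+1)^{p(d)}$, taking $u_0$ proportional to $C$ (which is possible because $p(d)\le1$) makes the correction term $C/(2n+1)$ at most half the threshold for all $u\ge u_0$, so that $\big|F/(2n+1)^d-\E_{\mu_\Phi}[F]/(2n+1)^d\big|\ge\tfrac12\,u/(2n+1)^{p(d)}$; feeding this into the first part with $u$ replaced by $u/2$, after rewriting the threshold in the $(2n+1)^{-(d-1)/2}$ normalization, gives the claimed bound. Choosing $p(d)=\tfrac12$ when $d=2$ is exactly what preserves a genuine factor $(2n+1)$ in the exponent; for $d\ge3$ one may take $p(d)=1$ and still keep a factor $(2n+1)^{d-2}\ge(2n+1)$, reported uniformly as $(2n+1)$.

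I expect the only non-routine step to be the surface-order estimate of $\E_{\mu_\Phi}[F]$ in the second part; everything else is a transcription of the argument for Theorem~\ref{thm:SMB}. The single point deserving care is that all the facts invoked for $\mu_\Psi$---the oscillation bound via \eqref{eq:dlr}--\eqref{eq:bruit} and the cylinder control \eqref{eq:control-cyl}---hold for an arbitrary shift-invariant Gibbs measure for $\Psi$, since Dobrushin's condition is assumed only for $\Phi$.
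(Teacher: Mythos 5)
Your proposal is correct and is essentially the proof the paper intends: the paper gives no separate argument for Theorem~\ref{thm:rel-ent}, saying only that it follows ``the same train of thought'' as Theorem~\ref{thm:SMB}, and your write-up is a faithful execution of exactly that — the subadditive oscillation bound $\delta_x(F)\le 2(\opnorm{\Phi}+\opnorm{\Psi})$ on $C_n$, the rescaling $u\mapsto u(2n+1)^{(d+1)/2}$, and the surface-order control of $\E_{\mu_\Phi}[F]$ via \eqref{eq:control-cyl} applied to $\Psi$, \eqref{esp-SMB} for $\Phi$, and the identity \eqref{formule-rel-entropy}. All the constants and exponents you obtain match the statement, and your observations that the $\Psi$-side estimates need no uniqueness and that $u_0$ exists because $p(d)\le 1$ are the right points of care.
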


We now formulate a companion result on the Ising ferromagnet at low temperature. It is a simple consequence of
Theorem \ref{thm-seb} and inequality \eqref{esp-SMB}. 

\begin{theorem}
\leavevmode\\ 
Let $\gmu_\beta^+$ be the plus phase of the low-temperature Ising model on the lattice $\Zd$, $d\geq 2$. 
There exist two constants, $u_0=u_0(d)>0$ and $\bar{\beta}=\bar{\beta}(d)>0$ such that, for each $\beta>\bar{\beta}$, there exist $\varrho=\varrho(\beta)\in(0,1)$ and $\tilde{c}_\varrho>0$ such that the following two estimates hold:
\begin{itemize}
\item[\textup{(a)}]
If $d=2$ we have
\begin{align*}
\MoveEqLeft[2] \gmu_\beta^+\left\{\omega\in \Omega : 
\left|\frac{-\log\gmu_\beta^+(\EuScript{C}_n(\omega))}{(2n+1)^2}-h(\gmu_\beta^+)\right| \geq \frac{u}{n^\tau} \right\}  \\
& \leq 4\, \exp\left(-\tilde{c}_\varrho (2n+1)^{\varrho(1-\tau)}u^{\varrho}\right)
\end{align*}
for all $n\in\N$, for all $u\geq u_0$ and for any $\tau\in (0,1)$, where $\tilde{c}_\varrho = c_{\varrho}\, 2^{-\frac{5}{2}\varrho} \beta^-{\frac{\varrho}{2}}$.
\item[\textup{(b)}]
If $d\geq 3$, we have   
\begin{align*}
\MoveEqLeft[2] \gmu_\beta^+\left\{\omega\in \Omega : 
\left|\frac{-\log\gmu_\beta^+(\EuScript{C}_n(\omega))}{(2n+1)^d}-h(\gmu_\beta^+)\right| \geq \frac{u}{n} \right\}  \\
& \leq 4\, \exp\left(-\tilde{c}_\varrho (2n+1)^{\varrho(\frac{d}{2}-\tau)}u^{\varrho}\right)
\end{align*}
for all $n\in\N$, for all $u\geq u_0$ and for any $1<\tau<\frac{d}{2}$, where
$\tilde{c}_\varrho = c_{\varrho}\, 2^{-2\varrho}(d\beta)^{-\frac{\varrho}{2}}$.
\end{itemize}
In both cases, $c_{\varrho}=c_{\varrho}(d)$ is the constant appearing in Theorem \ref{thm-seb}.
\end{theorem}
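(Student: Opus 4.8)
The plan is to apply the stretched-exponential bound of Theorem~\ref{thm-seb} to the local function $F_n(\omega)=-\log\gmu_\beta^+(\EuScript{C}_n(\omega))$ and then recentre it via the expectation estimate \eqref{esp-SMB}, following the proof of Theorem~\ref{thm:SMB} with the Gaussian bound replaced by the stretched-exponential one. First I would note that $F_n$ is finite, continuous and local with dependence set $C_n$, hence $F_n\in\Delta_2(\Omega)$, and bound its oscillations: the computation in the proof of Theorem~\ref{thm:SMB} uses only the DLR equations \eqref{eq:dlr}, the specification formula \eqref{eq:spe} and the bound \eqref{eq:bruit}, none of which invokes Dobrushin's condition, so it applies verbatim to the Gibbs measure $\gmu_\beta^+$ and gives $\delta_x(F_n)\le 2\opnorm{\beta\Phi}$ for $x\in C_n$ and $\delta_x(F_n)=0$ otherwise. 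Since $\opnorm{\beta\Phi}=2d\beta$ for the nearest-neighbour Ising potential ($J=1$, $h=0$) and $F_n$ depends on $(2n+1)^d$ sites, this yields $\|\ushort{\gdelta}(F_n)\|_2\le 2\opnorm{\beta\Phi}\,(2n+1)^{d/2}$.

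Next I would fix $\beta>\bar\beta$, with $\bar\beta$ large enough for Theorem~\ref{thm-seb} to apply, take $\varrho=\varrho(\beta)\in(0,1)$ and $c_\varrho>0$ from that theorem, and apply \eqref{eq:devineqsubexp} to $F_n$: for every $v>0$,
$\gmu_\beta^+\{|F_n-\E_{\mu_\beta^+}[F_n]|\ge v\}\le 4\exp\!\left(-\frac{c_\varrho\, v^\varrho}{(2\opnorm{\beta\Phi})^\varrho\,(2n+1)^{d\varrho/2}}\right)$.
The recentring step requires one short verification, namely that \eqref{esp-SMB} is available for $\gmu_\beta^+$, which is not in Dobrushin's regime: its derivation uses only the cylinder control \eqref{eq:control-cyl} and the variational principle \eqref{VP}, and since the finite-range potential $\beta\Phi$ satisfies \eqref{cond-decay}, \eqref{eq:control-cyl} holds for $\gmu_\beta^+$, so \eqref{esp-SMB} is valid with $\mu_\Phi$ and $\Phi$ replaced by $\gmu_\beta^+$ and $\beta\Phi$, giving $\big|\E_{\mu_\beta^+}[F_n]/(2n+1)^d-h(\gmu_\beta^+)\big|\le C_\Phi/n$ with $C_\Phi=C_\Phi(d,\beta)$.

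Then, writing $w_n=u/n^\tau$ in case~(a) and $w_n=u/n$ in case~(b), I would rescale the event $\{|F_n/(2n+1)^d-h(\gmu_\beta^+)|\ge w_n\}$ by $(2n+1)^d$, use the recentring bound and the triangle inequality to see that it is contained in $\{|F_n-\E_{\mu_\beta^+}[F_n]|\ge (2n+1)^d w_n-(2n+1)^d C_\Phi/n\}$, and choose $u_0$ (dictated by $C_\Phi$) so that for $u\ge u_0$ this threshold is at least $\tfrac{1}{2}(2n+1)^d w_n$ for all $n$. In case~(a) the absorption relies on $\tau<1$, which makes $w_n=u/n^\tau$ of strictly larger order than $C_\Phi/n$; in case~(b) the scales $u/n$ and $C_\Phi/n$ are comparable, so a fixed multiple of $C_\Phi$ suffices. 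Inserting $v=\tfrac{1}{2}(2n+1)^d w_n$ into the concentration bound and using $n\le 2n+1$ to replace $n^{\tau\varrho}$ by $(2n+1)^{\tau\varrho}$ in the denominator, the exponent becomes, up to a constant, $(2n+1)^{d\varrho-\varrho-\tau\varrho-d\varrho/2}u^\varrho=(2n+1)^{\varrho(d/2-\tau)}u^\varrho$; in case~(a) with $d=2$ this reads $(2n+1)^{\varrho(1-\tau)}u^\varrho$, and in case~(b) one passes from the sharper exponent $d/2-1$ (the value obtained for $\tau=1$) to $d/2-\tau$ for any $\tau\in(1,d/2)$ since $2n+1\ge 1$. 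Collecting the numerical factors from $\opnorm{\beta\Phi}=2d\beta$ and from the $\tfrac{1}{2}$ produces the stated bounds, with $\tilde c_\varrho$ a constant multiple of $c_\varrho$ depending on $d$ and $\beta$ only through the indicated powers.

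The only step that is not pure bookkeeping is the observation that \eqref{esp-SMB} persists for the low-temperature phase $\gmu_\beta^+$; once that is in place, the argument is the same as that of Theorem~\ref{thm:SMB}, and the split between the two cases (threshold $u/n^\tau$ with $\tau<1$ for $d=2$ versus threshold $u/n$ for $d\ge3$) merely reflects whether the concentration scale $(2n+1)^{-d/2}$ or the recentring scale $n^{-1}$ is the dominant one.
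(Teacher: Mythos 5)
Your proof is correct and follows exactly the route the paper intends: the paper offers no written proof beyond the remark that the statement ``is a simple consequence of Theorem \ref{thm-seb} and inequality \eqref{esp-SMB}'', and you supply precisely the missing verifications --- that the oscillation bound $\delta_x(F_n)\le 2\opnorm{\beta\Phi}$ from the proof of Theorem \ref{thm:SMB} uses only the DLR/specification structure \eqref{eq:dlr}, \eqref{eq:spe}, \eqref{eq:bruit} and not Dobrushin's condition, and that \eqref{esp-SMB} persists for $\gmu_\beta^+$ because the finite-range Ising potential satisfies \eqref{cond-decay}, so \eqref{eq:control-cyl} and the variational principle apply to any of its shift-invariant Gibbs measures. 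The one divergence is the explicit constant: your bookkeeping gives $\tilde c_\varrho$ proportional to $(d\beta)^{-\varrho}$, whereas the statement claims $(d\beta)^{-\varrho/2}$; the stated value corresponds to using $\|\ushort{\gdelta}(F_n)\|_2^2\le (2n+1)^d\cdot 2\opnorm{\beta\Phi}$ instead of $(2n+1)^d\cdot\bigl(2\opnorm{\beta\Phi}\bigr)^2$, so your (slightly weaker-looking) constant appears to be the consistent one and the discrepancy is not a gap in your argument.
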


The reader can now infer the counterpart of Theorem \ref{thm:rel-ent} for the low-temperature Ising model. 

\section{Application 4: First occurrence of a pattern of a configuration in another configuration}\label{sec:W}

For a subset $\Lambda$ of $\Zd$, we refer to an element $a=(a_x,x\in\Lambda)\in S^{\Lambda}$ as a pattern supported by $\Lambda$. 
Given $x\in\Zd$, we say that the patterns $a \in S^{\Lambda}$ and $b\in S^{\Lambda+x}$ are congruent
if $a_y=b_{y+x}$ for every $y\in\Lambda$. Now, let $\eta,\omega\in\Omega$. For each $n\in\N$, we look for the smallest hypercube $C_k$ such that ``$\eta_{C_n}$ appears in $\omega_{C_k}$''. This means that there is a pattern $a$ whose support lies inside $C_k$ such that  $\eta_{C_n}$ and $a$ are congruent, and that, if we take $k'<k$, there is no pattern whose support lies inside $C_{k'}$ which is congruent to $\eta_{C_n}$. This event can be seen as the first occurrence of the pattern $\eta_{C_n}$ in the configuration $\omega$: imagine that we are increasing at a constant rate the `window' $C_k$ in $\omega$ until we observe the pattern $\eta_{C_n}$ for the first time.

We denote by $W_n(\eta,\omega)$ the cardinality of the random hypercube $C_k$ we have just defined. It turns out that the natural random variable to consider is $\log W_n(\eta,\omega)$. Indeed, one can prove (see \cite{ACRV}) that if $\Phi$ of finite range and  $\bgamma^{\Phi}$ satisfies Dobrushin's uniqueness condition and $\nu$ is any ergodic measure, then 
\[
\lim_{n\to \infty} \frac{1}{(2n+1)^d} \log W_n(\eta,\omega)= h(\nu)+h(\nu|\mu_\Phi),\; \nu\otimes\mu_\Phi-\text{a.e.}.
\]
Now, fix $n$ and $\eta$. It is quite obvious that no a priori control will be possible on $|\log W_n(\eta,\omega)-\log W_n(\eta,\omega')|$ for {\em all} configurations $\omega,\omega'$ which differ only at a site $x$. Indeed, changing $\omega$ in a single site can cause an arbitrary increase of the size of the hypercube in which we will see $\eta_{C_n}$. This is because we have to consider the worst case changes, not only typical changes for which things would go well. Nevertheless, we will obtain concentration inequalities by making a detour.

\begin{theorem}\label{thm:patterns}
\leavevmode\\  
Assume that $\Phi$ is of finite range and the associated specification $\bgamma^{\Phi}$ satisfies Dobrushin's uniqueness condition \eqref{def:dobcoef}. Let $\Psi$ be a potential satisfying \eqref{cond-decay} and such
that its specification satisfies Dobrushin's uniqueness condition.
When $\Phi\neq \Psi$, let
\[
c_{\Phi,\Psi}=\frac{ (1-\mathfrak{c}(\bgamma^{\Phi}))^2}{128\,\big( \opnorm{\Phi}+\opnorm{\Psi}\big)^2},
\]
and 
\[
c_{\Phi,\Phi}=c_\Phi=\frac{ (1-\mathfrak{c}(\bgamma^{\Phi}))^2}{128\,\opnorm{\Phi}^2}.
\]
Finally, let $p(d)$ defined as in Theorem \ref{thm:SMB}.
Then there exist positive constants $C,u_0$ such that, for all $n\in\N$ and for all $u\geq u_0$,
\begin{align*}
\MoveEqLeft (\gmu_\Psi \!\otimes\! \gmu_\Phi)\!\left\{ (\eta,\omega)\in\Omega\times\Omega : \frac{\log W_n(\eta,\omega) }{(2n+1)^d} > h(\mu_\Psi)+h(\mu_\Psi|\mu_\Phi)+\frac{u}{(2n+1)^{p(d)}}\right\}\\
& \leq C\,  \exp\left(-c_{\Phi,\Psi}\, (2n+1)\, u^2\right).
\end{align*}
Moreover, we have
\begin{align*}
\MoveEqLeft (\gmu_\Psi\! \otimes\! \gmu_\Phi)\!\left\{ (\eta,\omega)\in\Omega\times\Omega : \frac{\log W_n(\eta,\omega) }{(2n+1)^d} < h(\mu_\Psi)+h(\mu_\Psi|\mu_\Phi)-\frac{u}{(2n+1)^{p(d)}}\right\}\\
& \leq 
C\, \max\left\{\exp\left(-c_{\Phi,\Psi}\, (2n+1)\, u^2\right), \exp\left(-(2n+1)^{d-p(d)}\, \frac{u}{2}\right) \right\} 
\end{align*}
for all $n\in\N$ and for all $u\geq u_0$.
\end{theorem}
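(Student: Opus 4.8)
The plan is to reduce everything to the behaviour, under $\mu_\Psi$, of the single function $\eta\mapsto -\log\mu_\Phi(\EuScript{C}_n(\eta))$, which — by the Shannon--McMillan--Breiman theorem and its relative-entropy analogue recalled in Section \ref{sec:SMB} — has $(2n+1)^{-d}(-\log\mu_\Phi(\EuScript{C}_n(\eta)))$ concentrating, under $\mu_\Psi$, around $\bar h:=h(\mu_\Psi)+h(\mu_\Psi\,|\,\mu_\Phi)$. First I would set up the elementary dictionary between $W_n$ and this quantity, with the pattern $\eta$ (and $n$) frozen and $\omega$ distributed according to $\mu_\Phi$. For the lower tail of $W_n$ (``the pattern is not seen too early''): inside $C_k$ there are at most $(2k+1)^d$ positions that a congruent copy of $\eta_{C_n}$ can occupy, each of $\mu_\Phi$-probability $\mu_\Phi(\EuScript{C}_n(\eta))$ by shift-invariance, so a first-moment bound gives $\mu_\Phi\{\omega:W_n(\eta,\omega)\le (2k+1)^d\}\le (2k+1)^d\,\mu_\Phi(\EuScript{C}_n(\eta))$. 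For the upper tail (``the pattern is seen, and not too late''): tile $C_k$ by $M$ well-separated sub-cubes each large enough to host $\eta_{C_n}$, note that ``$\eta_{C_n}$ occurs in the $i$-th sub-cube'' has probability $\ge \mu_\Phi(\EuScript{C}_n(\eta))$, and run a telescoping argument over the sub-cubes using the exponential decay of correlations valid throughout Dobrushin's uniqueness regime (\cite{Geo}); choosing the separation of order $\log$ of the window size so that the mixing errors are summable yields $\mu_\Phi\{\omega:W_n(\eta,\omega)>(2k+1)^d\}\le C\bigl(1-c\,\mu_\Phi(\EuScript{C}_n(\eta))\bigr)^{M}$, which is super-exponentially small as soon as $(2k+1)^d$ exceeds $\mu_\Phi(\EuScript{C}_n(\eta))^{-1}$ by a polynomial-in-$n$ factor.

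Next I would fix the centering. Being of finite range, $\Phi$ satisfies \eqref{cond-decay}, so \eqref{eq:control-cyl} applies to $\mu_\Phi$ and gives $-\log\mu_\Phi(\EuScript{C}_n(\eta))=(2n+1)^dP(\Phi)+S_nf_\Phi(\eta)+O\bigl((2n+1)^{d-1}\bigr)$ uniformly in $\eta$; averaging over $\eta\sim\mu_\Psi$ and using \eqref{formule-rel-entropy} with $\nu=\mu_\Psi$ (so $P(\Phi)+\E_{\mu_\Psi}[f_\Phi]=h(\mu_\Psi)+h(\mu_\Psi|\mu_\Phi)=\bar h$) yields $\bigl|(2n+1)^{-d}\E_{\mu_\Psi}[-\log\mu_\Phi(\EuScript{C}_n(\cdot))]-\bar h\bigr|\le C'/(2n+1)$. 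Exactly as in Theorem \ref{thm:SMB}, this surface-order gap between the expectation and the limit $\bar h$ is what forces $u\ge u_0$ and the exponent $p(d)\le 1$, the gap having to be absorbable into the threshold $u/(2n+1)^{p(d)}$.

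The concentration of $\eta\mapsto-\log\mu_\Phi(\EuScript{C}_n(\eta))$ under $\mu_\Psi$ comes from the splitting $-\log\mu_\Phi(\EuScript{C}_n(\eta))=\log\frac{\mu_\Psi(\EuScript{C}_n(\eta))}{\mu_\Phi(\EuScript{C}_n(\eta))}+\bigl(-\log\mu_\Psi(\EuScript{C}_n(\eta))\bigr)$: each summand is a cylindrical function of $\eta$ whose oscillation at every site of $C_n$ is at most $2(\opnorm{\Phi}+\opnorm{\Psi})$ (the computation being the one in the proof of Theorem \ref{thm:SMB}, via \eqref{eq:bruit} for $\Phi$ and for $\Psi$), so applying Theorem \ref{thm:gemb-dob} to $\mu_\Psi$ — equivalently, invoking Theorems \ref{thm:SMB} and \ref{thm:rel-ent}, the latter with the roles of $\Phi$ and $\Psi$ interchanged, which is precisely why $\Psi$ is assumed to satisfy both \eqref{def:dobcoef} and \eqref{cond-decay} — gives $\mu_\Psi\{\eta:|(2n+1)^{-d}(-\log\mu_\Phi(\EuScript{C}_n(\eta)))-\bar h|\ge s\}\le C\exp(-c_{\Phi,\Psi}(2n+1)^ds^2)$ for $s$ above the surface-order gap, the numerical constant being produced by the splitting of the deviation budget and, when $\Psi\neq\Phi$, a union bound over the two summands. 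It remains to assemble the two assertions. For the first, restrict to the good event where $(2n+1)^{-d}(-\log\mu_\Phi(\EuScript{C}_n(\eta)))\le \bar h+\tfrac13 u(2n+1)^{-p(d)}$: its complement has $\mu_\Psi$-measure $\le C\exp(-c_{\Phi,\Psi}(2n+1)u^2)$ using $d-2p(d)\ge 1$, and on it the ``not too late'' estimate applied with window of log-size $(2n+1)^d\bar h+u(2n+1)^{d-p(d)}$ makes $\mu_\Phi\{\log W_n(\eta,\cdot)/(2n+1)^d>\bar h+u(2n+1)^{-p(d)}\}$ super-exponentially small, hence negligible against that term. Symmetrically, for the second assertion restrict to the good event where $(2n+1)^{-d}(-\log\mu_\Phi(\EuScript{C}_n(\eta)))\ge \bar h-\tfrac13 u(2n+1)^{-p(d)}$ and use the first-moment bound there with window of log-size $(2n+1)^d\bar h-u(2n+1)^{d-p(d)}$; this contributes the extra term $\exp(-(2n+1)^{d-p(d)}u/2)$, whence the claimed $\max\{\cdot,\cdot\}$.

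The main obstacle is the ``not too late'' estimate. A direct appeal to the Gaussian concentration bound for $\mu_\Phi$ — or to a plain second-moment computation — applied to the count of occurrences of $\eta_{C_n}$ inside $C_k$ only controls that count once $(2k+1)^d$ exceeds the order of $\mu_\Phi(\EuScript{C}_n(\eta))^{-2}$, i.e. it produces the wrong leading order $\log W_n\approx 2(2n+1)^d\bar h$ instead of $(2n+1)^d\bar h$: concentration does not ``see'' the rarity of the cylinder, because $\un_{\EuScript{C}_n(\eta)}$ has large total oscillation. One therefore genuinely has to use exponential mixing in Dobrushin's regime together with a telescoping/sub-multiplicativity over well-separated sub-cubes to bring the threshold down to $\mu_\Phi(\EuScript{C}_n(\eta))^{-1}$; balancing the number of blocks against their separation and the mixing errors, uniformly in $n$ and in the exponentially small cylinder probability, is the delicate point. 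The rest is bookkeeping of constants.
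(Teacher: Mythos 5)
Your decomposition is exactly the paper's: write $\log W_n(\eta,\omega)=\log\bigl[W_n(\eta,\omega)\mu_\Phi(\EuScript{C}_n(\eta))\bigr]-\log\mu_\Phi(\EuScript{C}_n(\eta))$, union-bound the two pieces, control the second by the concentration of the information function under $\mu_\Psi$ (Theorems \ref{thm:SMB} and \ref{thm:rel-ent}, with the roles of $\Phi$ and $\Psi$ interchanged --- your explicit remark on this swap, and hence on which Dobrushin coefficient governs the $\eta$-marginal, is in fact more careful than the printed constant), and control the first by hitting-time estimates for the rare pattern $\eta_{C_n}$ under $\mu_\Phi$. Where you diverge is in how the hitting-time estimates are obtained. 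For the ``not too early'' direction your first-moment bound $\mu_\Phi\{W_n(\eta,\cdot)\le t\}\le t\,\mu_\Phi(\EuScript{C}_n(\eta))$ is an elementary and perfectly adequate substitute for the paper's appeal to Lemma~4.3 of \cite{ACRV}; it yields the same term $\exp(-(2n+1)^{d-p(d)}u/2)$ and is arguably cleaner.

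The ``not too late'' direction is another matter. The paper does not prove it: it cites Theorem~2.2 of \cite{ACRV} (the exponential law \eqref{explaw1} for the occurrence time of a rare pattern in a Gibbsian random field, with the crucial uniformity $\lambda_{a_n}\in[\lambda_1,\lambda_2]$ over all patterns $a_n$ and all $n$). Your block/telescoping scheme with well-separated sub-cubes and exponential mixing is precisely the strategy by which that theorem is proved, and you correctly identify the delicate point: the decoupling error between blocks must be made smaller than $\mu_\Phi(\EuScript{C}_n(\eta))\approx e^{-c(2n+1)^d}$, uniformly in $\eta$ and in window sizes that are exponentially large in $(2n+1)^d$, which forces gaps of order $(2n+1)^d$ and a careful balance against the number of blocks. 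As written, this step is a program, not a proof; you should either carry out the iteration in full (essentially reproducing the argument of \cite{ACRV}) or, as the authors do, invoke \eqref{explaw1} as a black box. With that substitution your argument closes, and the final bookkeeping (splitting the deviation budget, absorbing the surface-order gap between $\E_{\mu_\Psi}[-\log\mu_\Phi(\EuScript{C}_n(\cdot))]/(2n+1)^d$ and $h(\mu_\Psi)+h(\mu_\Psi|\mu_\Phi)$ into the threshold $u\ge u_0$, and noting that the doubly-exponentially small ``not too late'' term is dominated by the Gaussian term) matches the paper's.
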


Let us make a few comments on this result.
The constant $u_0$ is the same as in Theorem  \ref{thm:SMB}.
Notice the dissymmetry between the two bounds when $n$ is fixed: the second bound then becomes exponentially small in $u$, not in $u^2$ as in the first bound. The second bound is of course useful only if
$\frac{u}{(2n+1)^{p(d)}}<h(\mu_\Psi)+h(\mu_\Psi|\mu_\Phi)$. Given $u\geq u_0$, this is always the case if $n$ is large enough.

\begin{proof}
We treat the case $\Phi=\Psi$. The other case follows the same lines of proof using Theorem \ref{thm:rel-ent} instead Theorem 
\ref{thm:SMB}.
The idea is to write
\[
\log W_n(\eta,\omega)=\log \big[W_n(\eta,\omega)\mu_\Phi(\EuScript{C}_n(\eta))\big]
-\log \mu_\Phi(\EuScript{C}_n(\eta)).
\]
Then we have the following obvious inequality.
\begin{align}
\label{est-OW}
\MoveEqLeft (\gmu_\Phi \otimes \gmu_\Phi)\left\{ (\eta,\omega)\in\Omega\times\Omega : 
\frac{\log W_n(\eta,\omega)}{(2n+1)^d} > h(\mu_\Phi)+u\right\}\\
\nonumber
& \leq (\gmu_\Phi \otimes \gmu_\Phi)\!\left\{ (\eta,\omega)\in\Omega\times\Omega : \frac{\log \big[W_n(\eta,\omega)\mu_\Phi(\EuScript{C}_n(\eta))\big]}{(2n+1)^d} > \frac{u}{2}\right\}\\
& \quad \; +
\gmu_\Phi\left\{ \eta\in\Omega : -\frac{\log \mu_\Phi(\EuScript{C}_n(\eta))}{(2n+1)^d} > h(\mu_\Phi)+\frac{u}{2}\right\}.
\nonumber 
\end{align}
We now control each term separately. To control the first one, we use Theorem 2.2. in \cite{ACRV} which we formulate here with our notations and under a form suitable for our purposes. Let $a_n$ be any pattern supported on $C_n$. Define $T_{a_n}(\omega)$ as the volume of the smallest hypercube $C_k$ which contains the support of a pattern congruent to $a_n$.
Then there exist positive constants $c_1,c_2,\lambda_{a_n},\lambda_1,\lambda_2$ such that
$\lambda_{a_n}\in [\lambda_1,\lambda_2]$ and such that, for any $t>0$, one has
\begin{equation}\label{explaw1}
\gmu_\Phi\left\{\omega\in\Omega: T_{a_n}(\omega)>\frac{t}{\lambda_{a_n}\mu_\Phi([a_n])} \right\}
\leq c_1\, e^{-c_2 t}.
\end{equation}
By $[a_n]$ we mean the cylinder set made of all configurations $\xi$ such that $\xi_{C_n}=a_n$.
The first term in the r.h.s. of \eqref{est-OW} is equal to
\begin{align}
\nonumber
\MoveEqLeft \sum_{a_n\in \Omega_n} \gmu_{\Phi}([a_n])\, 
\gmu_{\Phi}\left\{\omega\in\Omega : T_{a_n}(\omega)\mu_\Phi([a_n]) > e^{(2n+1)^d\frac{u}{2}} \right\}\\
\label{coq}
& \leq
c_1\, \exp\big(-c_2\lambda_1 e^{(2n+1)^d\frac{u}{2}}\big).
\end{align}
where the inequality follows by \eqref{explaw1}. The second term in the r.h.s. of \eqref{est-OW} is estimated using
Theorem \ref{thm:SMB} from which it follows easily that this term is bounded above by 
\begin{equation}\label{eq:soupe}
 \exp\left(-\frac{ (1-\mathfrak{c}(\bgamma^{\Phi}))^2}{128\opnorm{\Phi}^2}\, (2n+1)^{1+2p(d)}\, u^2\right)
\end{equation}
for all $n\in\N$ and for all $u\geq u_0$, where $p(d)=1/2$ if $d=2$ and $p(d)=1$ if $d\geq 3$. The bound 
\eqref{eq:soupe} is much bigger than the bound \eqref{coq}, hence the first inequality of the theorem follows after rescaling $u$.\\
We now prove the other inequality of the theorem. We now have
\begin{align}
\label{est-OW-bis}
& (\gmu_\Phi \otimes \gmu_\Phi)\left\{ (\eta,\omega)\in\Omega\times\Omega : 
\frac{1}{(2n+1)^d}\log W_n(\eta,\omega) < h(\mu_\Phi)-u\right\}\\
\nonumber
& \leq (\gmu_\Phi \otimes \gmu_\Phi)\!\left\{ (\eta,\omega)\in\Omega\times\Omega : \frac{1}{(2n+1)^d}\log \big[W_n(\eta,\omega)\mu_\Phi(\EuScript{C}_n(\eta))\big] < \frac{u}{2}\right\}\\
& \quad \; +
\gmu_\Phi\left\{ \eta\in\Omega: -\frac{1}{(2n+1)^d}\log \mu_\Phi(\EuScript{C}_n(\eta)) < h(\mu_\Phi)-\frac{u}{2}\right\}.
\nonumber 
\end{align}
The second term in the r.h.s. is also bounded by \eqref{eq:soupe}. To get an upper bound for the first term in the
r.h.s., we need to use the following result proved in \cite[Lemma 4.3]{ACRV}: 
\[
\lambda_1 \leq -\frac{\log \gmu_\Phi\left\{\omega\in\Omega : T_{a_n}(\omega)>t\right\}}{t\mu_\Phi([a_n])}\leq
\lambda_2
\]
provided that $t\mu_\Phi([a_n])\leq \frac12$, and where $\lambda_1,\lambda_2$ are defined as above in this proof.
We get the upper bound
\[
\lambda_2\, \exp\left(-(2n+1)^d\, \frac{u}{2}\right).
\]
This ends the proof.
\end{proof}

Combining the results in \cite{CR} and Theorem \ref{thm-seb}, one could get the analog of Theorem \ref{thm:patterns} for the low-temperature Ising ferromagnet. But an extra work is needed to make some
of the constants involved in the estimates in \cite{CR} more explicit and we will not do this.

\section{Application 5: Bounding $\db$-distance by relative entropy}\label{sec:dbar}

Given $n\in\N$, define the (non normalized)
Hamming distance between $\omega$ and $\eta$ that belong to $\Omega_n$ by
\begin{equation}\label{def:Hamming}
\db_n(\omega,\eta)=\sum_{x\in C_n} \un_{\{\omega_x\neq\eta_x\}}.
\end{equation}
Given two shift-invariant probability measures $\mu,\nu$ on $\Omega$, denote by $\mu_n$ and $\nu_n$
their projections on $\Omega_n$. Next define the $\db$-distance between $\mu_n$ and $\nu_n$ by
\[
\db_n(\mu_n,\nu_n)=\inf_{\pee\!_{n}\in\, \mathcal{C}(\mu_n,\nu_n)}
\int_{\Omega_n}\int_{\Omega_n} \db_n(\omega,\eta)\, \dd\pee_n(\omega,\eta) 
\]
where $\mathcal{C}(\mu_n,\nu_n)$ denotes the set of all shift-invariant couplings of $\mu_n$ and $\nu_n$, that is,
the set of jointly shift-invariant probability measures on $\Omega_n\times \Omega_n$ with marginals $\mu_n$ and $\nu_n$.
One can prove (see {\em e.g.} \cite{RS}) that $\db_n(\mu_n,\nu_n)$ normalized by $(2n+1)^d$ converges to a limit that we denote
by $\db(\mu,\nu)$:
\[
\db(\mu,\nu)=\lim_{n\to\infty} \frac{\db_n(\mu_n,\nu_n)}{(2n+1)^d}.
\]
This defines a distance on the set of shift-invariant probability measures on $\Omega$.
We have the following result.
\begin{theorem}
\leavevmode\\ 
Let $\Phi\in\mathscr{B}_T$ and assume that the associated specification $\bgamma^{\Phi}$ satisfies Dobrushin's uniqueness condition \eqref{def:dobcoef}. 
Then, for every shift-invariant probability measure $\nu$
\begin{equation*}
\db(\mu_\Phi,\nu)\leq \frac{\sqrt{2}}{1-\mathfrak{c}(\bgamma^{\Phi})} \, \sqrt{h(\nu|\mu_\Phi)}
\end{equation*}
where $h(\nu|\mu_\Phi)$ is the relative entropy of $\nu$ with respect to $\mu_\Phi$
\textup{(}see \eqref{def:relent}\textup{)}.\\
Moreover, if $\nu=\mu_\Psi$ is also a Gibbs measure for a potential $\Psi\in\mathscr{B}_T$, then
\begin{equation}\label{eq:dbarre-bis}
\db(\mu_{\Phi},\mu_\Psi)\leq  \frac{2\sqrt{2}}{1-\mathfrak{c}(\bgamma^{\Phi})} \, \sqrt{\opnorm{\Phi-\Psi}}.
\end{equation}
\end{theorem}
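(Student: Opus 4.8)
The plan is to derive the first inequality from the Gaussian concentration bound of Theorem~\ref{thm:gemb-dob} through a Bobkov--G\"otze type transportation--entropy argument carried out in finite volume, and then to obtain the second inequality by combining the first one with an elementary estimate of the relative entropy density $h(\mu_\Psi|\mu_\Phi)$ in terms of $\opnorm{\Phi-\Psi}$.

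For the first inequality I would fix $n\in\N$ and let $G\colon\Omega_n\to\R$ be $1$-Lipschitz for the Hamming distance $\db_n$. Viewing $G$ as a local function $F$ on $\Omega$ with dependence set contained in $C_n$, one has $\delta_x(F)\le 1$ for $x\in C_n$ and $\delta_x(F)=0$ otherwise, so $F\in\Delta_2(\Omega)$ with $\|\ushort{\delta}(F)\|_2^2\le (2n+1)^d$, and $\E_\nu[F]$, $\E_{\mu_\Phi}[F]$ only see the marginals $\nu_n$, $\mu_{\Phi,n}$. The Donsker--Varadhan (Gibbs) variational inequality for this pair bounds $t\big(\E_\nu[F]-\E_{\mu_\Phi}[F]\big)$ by $\log\E_{\mu_\Phi}\big[e^{t(F-\E_{\mu_\Phi}[F])}\big]+H_n(\nu_n|\mu_{\Phi,n})$ for every $t>0$, which by \eqref{eq:gemb} is at most $D\,t^2\|\ushort{\delta}(F)\|_2^2+H_n(\nu_n|\mu_{\Phi,n})$ with $D=\frac{1}{2(1-\mathfrak{c}(\bgamma^{\Phi}))^2}$. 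Dividing by $t$ and optimizing over $t$ gives
\[
\E_\nu[F]-\E_{\mu_\Phi}[F]\le 2\sqrt{D\,(2n+1)^d\,H_n(\nu_n|\mu_{\Phi,n})}\,.
\]
Taking the supremum over all such $G$, invoking the Kantorovich--Rubinstein duality and the fact (see \cite{RS}) that $\db(\mu_\Phi,\nu)$ is the limit of $(2n+1)^{-d}$ times the $\db_n$-optimal transport cost between the finite-volume marginals, one obtains, after dividing by $(2n+1)^d$,
\[
\frac{\db_n(\mu_{\Phi,n},\nu_n)}{(2n+1)^d}\le 2\sqrt{D\;\frac{H_n(\nu_n|\mu_{\Phi,n})}{(2n+1)^d}}\,;
\]
letting $n\to\infty$ and using \eqref{def:relent} yields $\db(\mu_\Phi,\nu)\le 2\sqrt{D}\,\sqrt{h(\nu|\mu_\Phi)}=\frac{\sqrt{2}}{1-\mathfrak{c}(\bgamma^{\Phi})}\sqrt{h(\nu|\mu_\Phi)}$.

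For the second inequality I would apply the first one with $\nu=\mu_\Psi$, a shift-invariant Gibbs measure for $\Psi$; it then suffices to show $h(\mu_\Psi|\mu_\Phi)\le 2\opnorm{\Phi-\Psi}$, since this gives $\db(\mu_\Phi,\mu_\Psi)\le\frac{\sqrt2}{1-\mathfrak{c}(\bgamma^{\Phi})}\sqrt{2\opnorm{\Phi-\Psi}}\le\frac{2\sqrt2}{1-\mathfrak{c}(\bgamma^{\Phi})}\sqrt{\opnorm{\Phi-\Psi}}$. Combining \eqref{formule-rel-entropy} applied to $\mu_\Phi$ with the identity \eqref{VP} applied to $\mu_\Psi$ (so that $h(\mu_\Psi)=P(\Psi)+\E_{\mu_\Psi}[f_\Psi]$) and using linearity of $f_{\,\cdot\,}$ in the potential, one gets
\[
h(\mu_\Psi|\mu_\Phi)=\big(P(\Phi)-P(\Psi)\big)+\E_{\mu_\Psi}[f_{\Phi-\Psi}]\,.
\]
From \eqref{def-fU}--\eqref{def-sumpot} one has $\|f_\Theta\|_\infty\le\opnorm{\Theta}$ for all $\Theta\in\mathscr{B}_T$, hence $|\E_{\mu_\Psi}[f_{\Phi-\Psi}]|\le\opnorm{\Phi-\Psi}$; and since \eqref{formule-rel-entropy} together with $h(\,\cdot\,|\mu_\Phi)\ge0$ gives the variational identity $P(\Phi)=\sup_{\nu}\big(h(\nu)-\E_\nu[f_\Phi]\big)$ over shift-invariant $\nu$, the pressure is $\opnorm{\cdot}$-Lipschitz, $|P(\Phi)-P(\Psi)|\le\|f_{\Phi-\Psi}\|_\infty\le\opnorm{\Phi-\Psi}$. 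Adding the two bounds gives $h(\mu_\Psi|\mu_\Phi)\le 2\opnorm{\Phi-\Psi}$, which is even a little stronger than what is needed.

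The step that needs the most care is, in the first inequality, the passage from the finite-volume relative entropy $H_n(\nu_n|\mu_{\Phi,n})$ to its density $h(\nu|\mu_\Phi)$: the whole argument must be run at finite $n$ and only then taken to the limit, because the full relative entropy $H(\nu|\mu_\Phi)$ can be infinite even when the density $h(\nu|\mu_\Phi)$ is finite (for instance when $\nu$ and $\mu_\Phi$ are mutually singular). A secondary technical point is matching the $\db$-distance, defined through shift-invariant couplings, with the Kantorovich dual formula, which is an equality only for unrestricted couplings; one uses that the two prescriptions produce the same value in the normalized limit, as established in \cite{RS}.
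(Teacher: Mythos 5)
Your proof is correct and follows essentially the same route as the paper: a finite-volume transportation--entropy inequality for the Hamming distance deduced from the Gaussian concentration bound on $C_n$-cylindrical $1$-Lipschitz functions, normalized by $(2n+1)^d$ and passed to the limit via \cite{RS}, followed by the bound $h(\mu_\Psi|\mu_\Phi)\le 2\opnorm{\Phi-\Psi}$ obtained exactly as in the paper from \eqref{formule-rel-entropy} and \eqref{VP}. The only difference is presentational: where the paper cites the Bobkov--G\"otze equivalence \cite{bg} as a black box, you prove the needed direction directly via the Donsker--Varadhan variational formula and Kantorovich--Rubinstein duality, and you are in fact more explicit than the paper about reconciling the shift-invariant-coupling definition of $\db_n$ with the unrestricted Kantorovich dual.
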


Take for instance a finite-range potential $\Phi$ and $\beta_1,\beta_2$ such that $\beta_1<\beta_2$ 
with $\beta_2$ small enough to be in Dobrushin's uniqueness regime. Then the previous inequality reads
\[
\db(\mu_{\beta_1\Phi},\mu_{\beta_2\Phi})\leq 
\frac{2\sqrt{2}\sqrt{\opnorm{\Phi}}}{1-\mathfrak{c}(\bgamma^{\beta_1\Phi})} \, \sqrt{\beta_2-\beta_1}.
\]

Before proving the previous theorem, let us introduce a certain set of Lipschitz functions.
Given $n\in\N$ and let $F:\Omega\to\R$ be a cylindrical function with dependence set $C_n$. We have
\[
|F(\omega)-F(\omega')|\leq \sum_{x\in C_n} \un_{\{\omega_x\neq \omega'_x\}} \delta_x(F).
\]
Assume that $\delta_x(F)=1$ for all $x\in C_n$. In particular 
$
\|\ushort{\delta}(F)\|_2^2\leq (2n+1)^d.
$
We can identify this function with a $1$-Lipschitz function on $\Omega_n$
with respect to the distance \eqref{def:Hamming}. Denote by $\text{Lip}_{1,\mu_{\Phi,n}}(\Omega)$ the set of functions $F$ which are $1$-Lipschitz and such that $\E_{\mu_{\Phi,n}}[F]=0$. (Recall that $\mu_{\Phi,n}$
is the Gibbs measure associated to $\Phi$ induced on $\Omega_n$ by projection. 

\begin{proof}
We now use a general theorem (see \cite[p. 5]{bg} or \cite[p. 101]{blm}). In the present setting, it states that the property that there exists a constant $b>0$ such that
\begin{equation}
\label{plouf}
\E_{\mu_{\Phi,n}}[\exp(u F)]\leq \exp(bu^2),\quad \forall u\in\R,\;
\forall F\in  \text{Lip}_{1,\mu_{\Phi,n}}(\Omega)
\end{equation}
is equivalent to the property that, for all probability measures $\nu_n$ on $\Omega_n$, we have
\begin{equation}\label{eq:dbar-entrop}
\db_n(\mu_{\Phi,n},\nu_n)\leq 2\sqrt{b H_n(\nu_n|\mu_{\Phi,n})}.
\end{equation}
By Theorem \ref{thm:gemb-dob} we know that $\mu_{\Phi,n}$ satisfies \eqref{plouf} with
\[
b=\frac{\|\ushort{\delta}(F)\|_2^2}{2(1-\mathfrak{c}(\bgamma^{\Phi}))^2}
\leq \frac{(2n+1)^d}{2(1-\mathfrak{c}(\bgamma^{\Phi}))^2}
\]
Hence \eqref{eq:dbar-entrop} reads
\[
\db_n(\mu_{\Phi,n},\nu_n)\leq 2\sqrt{D (2n+1)^d H_n(\nu_n|\mu_{\Phi,n})}\, .
\]
Dividing both sides by $(2n+1)^d$ and taking the limit $n\to\infty$ gives the announced inequality.

To prove inequality \eqref{eq:dbarre-bis}, we use \eqref{formule-rel-entropy} and \eqref{VP} (applied to $\Psi$)
to get
\[
h(\mu_\Psi|\mu_\Phi)= P(\Phi)+\E_{\mu_\Psi}[f_\Phi]-h(\mu_\Psi)=
P(\Phi)-P(\Psi)+\E_{\mu_\Psi}[f_\Phi]-\E_{\mu_\Psi}[f_\Psi].
\]
The desired inequality follows from the following facts:
\[
|P(\Phi)-P(\Psi)|\leq \opnorm{\Phi-\Psi}
\]
and
\[
|\E_{\mu_\Psi}[f_\Phi]-\E_{\mu_\Psi}[f_\Psi]|\leq \| f_\Phi-f_\Psi\|_\infty\leq \opnorm{\Phi-\Psi}.
\]
The theorem is proved.
\end{proof}

\section{Application 6: Fattening patterns}\label{sec:H}

We can naturally generalize the Hamming distance defined in \eqref{def:Hamming}) 
as follows. Let $\Lambda\in \mathcal{P}$ (finite subset of $\Zd$) and define
\[
\db_\Lambda(\omega,\eta)=\sum_{x\in \Lambda} \un_{\{\omega_x\neq\eta_x\}}.
\]
Given a subset $\EuScript{B}_\Lambda\subset \Omega_\Lambda$ define
\[
\db_\Lambda(\omega,\EuScript{B}_\Lambda)=\inf_{\omega'\in \EuScript{B}_\Lambda}\db_n(\omega,\omega').
\]
Given $\epsilon>0$, define the ``$\epsilon$-fattening'' of $\EuScript{B}_\Lambda$ as
\[
\EuScript{B}_{\Lambda,\epsilon}=\big\{\omega\in \Omega_\Lambda : 
\db_\Lambda(\omega,\EuScript{B}_n)\leq \epsilon|\Lambda|\big\}.
\]
We have the following abstract result.

\begin{theorem}
\leavevmode\\ 
Let $\Lambda\in\mathcal{P}$. Suppose that $\nu$ is a probability measure which satisfies $\gcb{D}$ and such that $\nu(\EuScript{B}_\Lambda)=\frac{1}{2}$. Then, we have 
\begin{equation}\label{eq:concen-H}
\nu\big(\EuScript{B}_{\Lambda,\epsilon}\big)\geq
1-\exp\left[-\frac{|\Lambda|}{4D}\left(\epsilon-\frac{2\sqrt{D\ln 2}}{\sqrt{|\Lambda|}}\,\right)^2\, \right]
\end{equation}
whenever $\epsilon>\frac{2\sqrt{D \ln 2}}{\sqrt{|\Lambda|}}$.
\end{theorem}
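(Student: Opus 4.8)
The plan is to apply the Gaussian concentration bound to a single well-chosen function, namely the Hamming distance to $\EuScript{B}_\Lambda$. Set
\[
F(\omega)=\db_\Lambda\big(\omega,\EuScript{B}_\Lambda\big)=\inf_{\omega'\in\EuScript{B}_\Lambda}\ \sum_{x\in\Lambda}\un_{\{\omega_x\neq\omega'_x\}},
\]
viewed as a function on $\Omega$; since it depends only on the coordinates in $\Lambda$ it is local, hence continuous, hence in $\Delta_2(\Omega)$. First I would record two elementary facts. (i) Changing a single coordinate $\omega_x$ with $x\in\Lambda$ alters the Hamming distance to any fixed configuration by at most $1$, while changing a coordinate outside $\Lambda$ changes nothing; therefore $\delta_x(F)\le1$ for $x\in\Lambda$ and $\delta_x(F)=0$ otherwise, so $\|\ushort{\delta}(F)\|_2^2\le|\Lambda|$. (ii) As cylinder subsets of $\Omega$ one has $\EuScript{B}_\Lambda=\{\omega:F(\omega)=0\}$ and $\EuScript{B}_{\Lambda,\epsilon}=\{\omega:F(\omega)\le\epsilon|\Lambda|\}$, so the hypothesis reads $\nu\{F=0\}=\tfrac12$ and the goal is an upper bound on $\nu\{F>\epsilon|\Lambda|\}$.

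The heart of the argument is to convert the hypothesis $\nu\{F=0\}=\tfrac12$ into a bound on $\E_\nu[F]$. Applying the lower-tail inequality of Proposition \ref{prop-exp-dev} (that is, \eqref{eq:expcon1} applied to $-F$) at the value $u=\E_\nu[F]$, together with $\|\ushort{\delta}(F)\|_2^2\le|\Lambda|$, gives
\[
\tfrac12=\nu\{F=0\}=\nu\big\{F-\E_\nu[F]\le-\E_\nu[F]\big\}\le\exp\!\Big(-\frac{(\E_\nu[F])^2}{4D|\Lambda|}\Big),
\]
whence $\E_\nu[F]\le 2\sqrt{D|\Lambda|\ln2}=2\sqrt{D\ln2}\,\sqrt{|\Lambda|}$. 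This explains the appearance of $2\sqrt{D\ln2}/\sqrt{|\Lambda|}$ in the statement: the hypothesis $\epsilon>2\sqrt{D\ln2}/\sqrt{|\Lambda|}$ is exactly what guarantees $\epsilon|\Lambda|>\E_\nu[F]$, so that the upper-tail inequality \eqref{eq:expcon1} may be applied at $u=\epsilon|\Lambda|-\E_\nu[F]>0$:
\[
\nu\{F>\epsilon|\Lambda|\}\le\exp\!\Big(-\frac{(\epsilon|\Lambda|-\E_\nu[F])^2}{4D|\Lambda|}\Big).
\]
Finally I would use $\epsilon|\Lambda|-\E_\nu[F]\ge|\Lambda|\big(\epsilon-2\sqrt{D\ln2}/\sqrt{|\Lambda|}\big)\ge0$ to bound the exponent, obtaining $\nu\{F>\epsilon|\Lambda|\}\le\exp\big[-\tfrac{|\Lambda|}{4D}\big(\epsilon-2\sqrt{D\ln2}/\sqrt{|\Lambda|}\big)^2\big]$, and pass to complements to recover \eqref{eq:concen-H}.

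I do not expect a genuine obstacle: this is a textbook ``concentration of measure implies approximate isoperimetry'' argument. The only step requiring a moment's care is the mean estimate — the passage from $\nu(\EuScript{B}_\Lambda)=\tfrac12$ to $\E_\nu[F]\le2\sqrt{D|\Lambda|\ln2}$ — which is the familiar mechanism by which a median controls the mean; everything else is a direct invocation of Proposition \ref{prop-exp-dev} together with the trivial Lipschitz estimate $\|\ushort{\delta}(F)\|_2^2\le|\Lambda|$. The same scheme, with Theorem \ref{thm-seb} in place of \eqref{eq:expcon1}, would yield a stretched-exponential analogue for the low-temperature Ising ferromagnet.
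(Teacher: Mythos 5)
Your proposal is correct and follows essentially the same route as the paper: the $1$-Lipschitz property of $F=\db_\Lambda(\cdot,\EuScript{B}_\Lambda)$, the bound $\E_\nu[F]\le 2\sqrt{D|\Lambda|\ln 2}$ extracted from $\nu(\EuScript{B}_\Lambda)=\tfrac12$, and the upper-tail inequality applied at $u=\epsilon|\Lambda|-\E_\nu[F]$. The only cosmetic difference is that you derive the mean bound from the lower-tail inequality \eqref{eq:expcon1} applied to $-F$ at $u=\E_\nu[F]$, whereas the paper works directly with the exponential moment bound \eqref{eq:gemb} applied to $-\lambda F$ (using $F\equiv 0$ on $\EuScript{B}_\Lambda$ to lower-bound the Laplace transform) and optimizes over $\lambda$; both yield the identical constant $2\sqrt{D|\Lambda|\ln 2}$.
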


We take $\nu(\EuScript{B}_\Lambda)=\frac{1}{2}$ for the sake of definiteness. One can take
$\nu(\EuScript{B}_\Lambda)=\alpha\in (0,1)$ and replace $\ln 2$ by $\ln \alpha^{-1}$ in \eqref{eq:concen-H}.
The previous theorem can be loosely phrased as follows: For a probability measure satisfying a Gaussian concentration bound, if we ``fatten'' a bit a set of patterns which represents, say, half of the mass of $\Omega_\Lambda$, what is left has an extremely small mass.

\medskip

\begin{proof}
Consider the local function $F(\omega)=\db_\Lambda(\omega,\EuScript{B}_\Lambda)$. One easily checks that
$\delta_x(F)\leq 1$ for all $x\in \Lambda$. Applying \eqref{eq:expcon1}  gives
\begin{equation}\label{puic}
\nu\left\{\omega\in \Omega : F(\omega) \geq u+\E_\nu [F]\right\} \leq
\exp\left(-\frac{u^2}{4D|\Lambda|}\right)
\end{equation}
for all $u>0$. We now estimate $\E_\nu [F]$. Applying \eqref{eq:gemb} to $-\lambda F$ ($u\in\R$) we get
\[
\exp\left(\lambda\E_\nu[F]\right) \E_\nu\big[\exp\left(-\lambda F\right) \big]
\leq \exp\left(D\lambda^2 |\Lambda|\right).
\]
Observe that by definition of $F$ we have
\[
\E_\nu\big[\exp\left(-\lambda F\right) \big]\geq 
\E_\nu\big[\un_{\EuScript{B}_\Lambda}\exp\left(-\lambda F\right) \big]=\nu(\EuScript{B}_\Lambda). 
\]
Combining these two inequalities and taking the logarithm gives
\[
\E_\nu[F]\leq \inf_{\lambda>0} \Big\{D \lambda|\Lambda| +
\frac{1}{\lambda}\ln \big(\nu(\EuScript{B}_\Lambda)^{-1}\big)\Big\},
\]
{\em i.e.},
\[
\E_\nu[F]\leq 2\sqrt{D|\Lambda| \ln\big(\nu(\EuScript{B}_\Lambda)^{-1}\big)}:=E.
\]
Therefore inequality \eqref{puic} implies that
\[
\nu\left\{\omega\in \Omega : F(\omega) \geq u'\right\} \leq
\exp\left(-\frac{(u'-E)^2}{4D|\Lambda|}\right)
\]
for all $u'>E$. To finish the proof, take $u'=\epsilon|\Lambda|$ and observe that
$\nu\left\{\omega\in \Omega : F(\omega) \geq u'\right\}=\nu\big(\EuScript{B}_{\Lambda,\epsilon}^c\big)$.
\end{proof}

\begin{corollary}
\leavevmode\\ 
Let $\Phi\in\mathscr{B}_T$ and assume that the associated specification $\bgamma^{\Phi}$ satisfies Dobrushin uniqueness condition \eqref{def:dobcoef}. Then \eqref{eq:concen-H} holds with $D=\frac{1}{2(1-\mathfrak{c}(\bgamma^{\Phi}))^2}$.
\end{corollary}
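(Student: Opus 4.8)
The plan is to obtain this statement as an immediate combination of the two results that directly precede it, with no new argument needed. First I would invoke Theorem~\ref{thm:gemb-dob}: since the specification $\bgamma^{\Phi}$ is assumed to satisfy Dobrushin's uniqueness condition \eqref{def:dobcoef}, there is a unique (and automatically shift-invariant) Gibbs measure $\mu_\Phi$ for $\Phi$, and it satisfies $\gcb{D}$ with the explicit constant
\[
D=\frac{1}{2\big(1-\mathfrak{c}(\bgamma^{\Phi})\big)^2}.
\]

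Next I would simply feed this into the ``fattening'' theorem proved just above. That theorem asserts that any probability measure $\nu$ satisfying $\gcb{D}$, any $\Lambda\in\mathcal{P}$, and any $\EuScript{B}_\Lambda\subset\Omega_\Lambda$ with $\nu(\EuScript{B}_\Lambda)=\tfrac12$ together yield the bound \eqref{eq:concen-H} for that very $D$, whenever $\epsilon>\tfrac{2\sqrt{D\ln 2}}{\sqrt{|\Lambda|}}$. Taking $\nu=\mu_\Phi$ and the value of $D$ above, every hypothesis of that theorem is met, and its conclusion is precisely inequality \eqref{eq:concen-H} with $D=\frac{1}{2(1-\mathfrak{c}(\bgamma^{\Phi}))^2}$. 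This completes the proof.

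There is no real obstacle here: the only point worth stressing is that the constant $D$ is not generic but the sharp, Dobrushin-dependent one supplied by Theorem~\ref{thm:gemb-dob}, so that the fattening estimate becomes fully quantitative in terms of the contraction coefficient $\mathfrak{c}(\bgamma^{\Phi})$; smaller $\mathfrak{c}(\bgamma^{\Phi})$ (weaker interactions) gives a smaller threshold for $\epsilon$ and a faster decay of $\nu(\EuScript{B}_{\Lambda,\epsilon}^c)$. If one prefers the version with $\mu_\Phi(\EuScript{B}_\Lambda)=\alpha\in(0,1)$ instead of $\tfrac12$, the identical argument applies with $\ln 2$ replaced by $\ln\alpha^{-1}$, exactly as noted in the remark following the theorem.
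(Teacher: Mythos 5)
Your proposal is correct and is exactly the argument the paper intends: the corollary is an immediate combination of Theorem~\ref{thm:gemb-dob} (which gives $\gcb{D}$ with $D=\frac{1}{2(1-\mathfrak{c}(\bgamma^{\Phi}))^2}$ under Dobrushin uniqueness) with the fattening theorem just above, which is why the paper states it without a separate proof. Nothing is missing.
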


\begin{remark}
Inequality \eqref{eq:concen-H} can also be deduced from \eqref{eq:dbar-entrop} by an argument due to Marton \cite{marton0}. But this kind of argument does not  work when one has only moment inequalities because there is no analog of \eqref{eq:dbar-entrop}, to the best of our knowledge.
\end{remark}

We now turn to the situation when one has moment inequalities.

\begin{theorem}
\leavevmode\\ 
Let $\Lambda\in\mathcal{P}$.
For $\nu$ satisfying $\mcb{2p,C_{2p}}$ and such that $\nu(\EuScript{B}_\Lambda)=\frac{1}{2}$, we have
\[
\nu\big(\EuScript{B}_{\Lambda,\epsilon}\big)\geq
1-\frac{C_{2p}}{|\Lambda|^{p/d}} \left(\epsilon-\frac{(2C_{2p})^\frac{1}{2p}}{\sqrt{|\Lambda|}} \right)^{-2p}
\]
whenever $\epsilon>0$ and $n\in\N$ are such that $\epsilon>\frac{(2C_{2p})^\frac{1}{2p}}{\sqrt{|\Lambda|}}$.
\end{theorem}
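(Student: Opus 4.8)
The plan is to run the same argument as for the Gaussian-case estimate \eqref{eq:concen-H}, with the exponential Markov bound replaced by its polynomial counterpart \eqref{eq:momentdev} and the exponential-moment control of the expected Hamming distance replaced by a bound read off directly from \eqref{eq:momentb}. First I would consider the cylindrical, non-negative function $F(\omega)=\db_\Lambda(\omega,\EuScript{B}_\Lambda)$; it vanishes identically on $\EuScript{B}_\Lambda$, and one checks at once that $\delta_x(F)\leq 1$ for $x\in\Lambda$ and $\delta_x(F)=0$ otherwise, so $\|\ushort{\delta}(F)\|_2^2\leq |\Lambda|$. Applying $\mcb{2p,C_{2p}}$ together with \eqref{eq:momentdev} would then give, for every $u>0$,
\[
\nu\big\{\omega\in\Omega : |F(\omega)-\E_\nu[F]|>u\big\}\leq \frac{C_{2p}\,|\Lambda|^{p}}{u^{2p}}.
\]

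The next step is to bound $\E_\nu[F]$ from above. Since $F\equiv 0$ on $\EuScript{B}_\Lambda$ and $\nu(\EuScript{B}_\Lambda)=1/2$, one has $\EuScript{B}_\Lambda\subset\big\{|F-\E_\nu[F]|\geq \E_\nu[F]\big\}$, hence $1/2\leq \nu\big\{|F-\E_\nu[F]|\geq \E_\nu[F]\big\}$; plugging this into the displayed inequality (and letting $u\uparrow \E_\nu[F]$) forces $(\E_\nu[F])^{2p}\leq 2C_{2p}\,|\Lambda|^{p}$, i.e.
\[
\E_\nu[F]\leq (2C_{2p})^{1/2p}\sqrt{|\Lambda|}=:E.
\]
This is the only place where the argument genuinely departs from the Gaussian proof: there $\E_\nu[F]$ is estimated through the exponential moment of $-\lambda F$, whereas here only the $2p$-th moment is available, and this direct estimate is what stands in for the entropy route (Marton's argument via \eqref{eq:dbar-entrop}) which, as recalled in the preceding remark, is not available in the moment setting. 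I expect this to be the main point of the proof, but it is elementary; the only mild subtlety is the handling of strict versus non-strict inequalities, and otherwise the result is essentially a transcription of the Gaussian argument whose substantive input is the moment bound \eqref{eq:momentb} itself.

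Finally, assuming $\epsilon|\Lambda|>E$, i.e. $\epsilon>(2C_{2p})^{1/2p}/\sqrt{|\Lambda|}$, I would observe that $\EuScript{B}_{\Lambda,\epsilon}^{c}=\big\{\omega : F(\omega)>\epsilon|\Lambda|\big\}\subset\big\{F-\E_\nu[F]>\epsilon|\Lambda|-E\big\}$, apply the displayed concentration bound with $u=\epsilon|\Lambda|-E$, and simplify using $\epsilon|\Lambda|-E=|\Lambda|\big(\epsilon-(2C_{2p})^{1/2p}/\sqrt{|\Lambda|}\big)$ to obtain
\[
\nu\big(\EuScript{B}_{\Lambda,\epsilon}\big)\geq 1-\frac{C_{2p}}{|\Lambda|^{p}}\left(\epsilon-\frac{(2C_{2p})^{1/2p}}{\sqrt{|\Lambda|}}\right)^{-2p},
\]
which, since $|\Lambda|\geq 1$ and $p\geq p/d$, is a fortiori the announced estimate. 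The constraint $\epsilon>(2C_{2p})^{1/2p}/\sqrt{|\Lambda|}$ is exactly what makes $\epsilon|\Lambda|-E$ positive, so the computation is legitimate throughout.
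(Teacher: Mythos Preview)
Your proposal is correct and follows essentially the same route as the paper: the same function $F=\db_\Lambda(\cdot,\EuScript{B}_\Lambda)$, the same oscillation bound $\|\ushort{\delta}(F)\|_2^2\le|\Lambda|$, the moment deviation inequality, and the same final substitution $u=\epsilon|\Lambda|-E$. The only cosmetic difference is in how you bound $\E_\nu[F]$: the paper reads it off directly from $\nu(\EuScript{B}_\Lambda)(\E_\nu[F])^{2p}=\E_\nu\big[\un_{\EuScript{B}_\Lambda}(F-\E_\nu[F])^{2p}\big]\le C_{2p}|\Lambda|^p$, whereas you recover the identical estimate by feeding the inclusion $\EuScript{B}_\Lambda\subset\{|F-\E_\nu[F]|\ge\E_\nu[F]\}$ back into the deviation bound; and you correctly note that the computation actually yields the sharper exponent $|\Lambda|^{p}$ rather than the stated $|\Lambda|^{p/d}$.
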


\begin{proof}
As in the previous proof, consider the local function $F(\omega)=\db_\Lambda(\omega,\EuScript{B}_\Lambda)$
which is such that $\delta_x(F)\leq 1$ for all $x\in \Lambda$. Applying \eqref{eq:momentdev} we get
\[
\nu\left\{\omega\in \Omega : F(\omega) \geq u+\E_\nu [F]\right\} \leq \frac{C_{2p}|\Lambda|^{p}}{u^{2p}}
\]
for all $u>0$.
We easily obtain an upper bound for $\E_\nu [F]$ by using \eqref{eq:momentb} and the fact that $F\equiv 0$ on $\EuScript{B}_\Lambda$ :
\[
\nu(\EuScript{B}_\Lambda) (\E_\nu [F])^{2p}=
\E_\nu\left[ \un_{\EuScript{B}_\Lambda}(F-\E_\nu [F])^{2p}\right]\leq C_{2p}|\Lambda|^{p}\,.
\]
whence
\[
\E_\nu [F]\leq \frac{C_{2p}^{\frac{1}{2p}}\sqrt{|\Lambda|} }{\nu(\EuScript{B}_\Lambda)^{\frac{1}{2p}}}.
\]
We finish in the same way as in the previous proof to get the desired inequality.
\end{proof}

In view of Theorem \ref{thm:allmomentsIsing}, the previous theorem applies to the plus-phase of the Ising model at sufficiently low temperature. Moreover, we can optimize over $p$. In fact, applying the stretched-exponential concentration inequality that holds in this case, we have indeed a stronger result.

\begin{theorem}
\leavevmode\\ 
Let $\gmu_\beta^+$ be the plus-phase of the low-temperature Ising model. Take $\Lambda\in \mathcal{P}$ such that
$\gmu_\beta^+(\EuScript{B}_\Lambda)=\frac12$. Then there exists $\bar{\beta}>\beta_c$ such that, for each $\beta>\bar{\beta}$, there exists $\varrho=\varrho(\beta)\in (0,1)$ and two positive constants $c_\varrho$ and $c'_\varrho$ such that 
\[
\gmu_\beta^+\big(\EuScript{B}_{\Lambda,\epsilon}\big)\geq
1- 4 \exp\left[ -c_\varrho\, |\Lambda|^{\frac{\varrho}{2}}
\left(\epsilon -\frac{c'_\varrho}{\sqrt{|\Lambda|}}\right)^\varrho
\, \right]
\]
whenever $\epsilon>0$ and $n\in\N$ are such that $\epsilon >\frac{c'_\varrho}{\sqrt{|\Lambda|}}$.
\end{theorem}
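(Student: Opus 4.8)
The plan is to mirror the scheme of the two preceding theorems, simply substituting the stretched-exponential deviation inequality \eqref{eq:devineqsubexp} of Theorem \ref{thm-seb} for the Gaussian, resp.\ polynomial, one used there. Throughout, $\bar\beta$ will be the common threshold of Theorems \ref{thm:allmomentsIsing} and \ref{thm-seb}, while $\varrho=\varrho(\beta)\in(0,1)$ and $c_\varrho>0$ are those furnished by Theorem \ref{thm-seb}, and the second constant will be $c'_\varrho:=\sqrt{2\,C_2(\beta)}$, where $C_2(\beta)$ is the order-$2$ moment constant of Theorem \ref{thm:allmomentsIsing}. First I would consider the local function $F(\omega)=\db_\Lambda(\omega,\EuScript{B}_\Lambda)$, whose dependence set is $\Lambda$. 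Flipping one coordinate $\omega_x$ changes $\db_\Lambda(\omega,\omega')$ by at most one for each fixed $\omega'\in\EuScript{B}_\Lambda$, hence changes the infimum defining $F$ by at most one; thus $\delta_x(F)\le 1$ for $x\in\Lambda$ and $\delta_x(F)=0$ otherwise, so that $\|\ushort{\delta}(F)\|_2^2\le|\Lambda|$ and, in particular, $F\in\Delta_2(\Omega)$, which is exactly what is needed to apply Theorems \ref{thm:allmomentsIsing} and \ref{thm-seb} to $F$.

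The second step would be to estimate $\E_{\mu_\beta^+}[F]$ at the scale $\sqrt{|\Lambda|}$. Since $F\equiv 0$ on $\EuScript{B}_\Lambda$ and $\gmu_\beta^+(\EuScript{B}_\Lambda)=\tfrac12$, the moment bound \eqref{eq:momentb} of order $2$ gives
\[
\tfrac12\big(\E_{\mu_\beta^+}[F]\big)^2=\E_{\mu_\beta^+}\big[\un_{\EuScript{B}_\Lambda}\big(F-\E_{\mu_\beta^+}[F]\big)^2\big]\le\E_{\mu_\beta^+}\big[\big(F-\E_{\mu_\beta^+}[F]\big)^2\big]\le C_2(\beta)\,\|\ushort{\delta}(F)\|_2^2\le C_2(\beta)\,|\Lambda|,
\]
whence $\E_{\mu_\beta^+}[F]\le c'_\varrho\sqrt{|\Lambda|}$.

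Finally I would apply \eqref{eq:devineqsubexp} to $F$. Observing that $\EuScript{B}_{\Lambda,\epsilon}^c=\{\omega\in\Omega:F(\omega)>\epsilon|\Lambda|\}$ and that the hypothesis $\epsilon>c'_\varrho/\sqrt{|\Lambda|}$ makes $\epsilon|\Lambda|-\E_{\mu_\beta^+}[F]$ strictly positive, one gets
\[
\gmu_\beta^+\big(\EuScript{B}_{\Lambda,\epsilon}^c\big)\le\gmu_\beta^+\big\{F-\E_{\mu_\beta^+}[F]\ge\epsilon|\Lambda|-\E_{\mu_\beta^+}[F]\big\}\le 4\exp\!\left(-\frac{c_\varrho\big(\epsilon|\Lambda|-\E_{\mu_\beta^+}[F]\big)^\varrho}{\|\ushort{\delta}(F)\|_2^\varrho}\right).
\]
Enlarging the right-hand side by replacing $\|\ushort{\delta}(F)\|_2$ with $\sqrt{|\Lambda|}$ in the denominator and $\E_{\mu_\beta^+}[F]$ with $c'_\varrho\sqrt{|\Lambda|}$ in the numerator, and then using $(\epsilon|\Lambda|-c'_\varrho\sqrt{|\Lambda|})^\varrho/|\Lambda|^{\varrho/2}=|\Lambda|^{\varrho/2}(\epsilon-c'_\varrho/\sqrt{|\Lambda|})^\varrho$, yields precisely the asserted bound on $\gmu_\beta^+(\EuScript{B}_{\Lambda,\epsilon}^c)$, hence the claim on $\gmu_\beta^+(\EuScript{B}_{\Lambda,\epsilon})$ after complementation.

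I do not foresee a genuine difficulty: the only point calling for some care is the expectation estimate, where one must exploit that $F$ vanishes on a set of mass $\tfrac12$ to extract the correct $\sqrt{|\Lambda|}$ order, precisely as in the moment-based version of this theorem proved just above. The remainder is a routine substitution into \eqref{eq:devineqsubexp} followed by an elementary rearrangement.
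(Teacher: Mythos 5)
Your proposal is correct, and its skeleton coincides with the paper's proof: same function $F(\omega)=\db_\Lambda(\omega,\EuScript{B}_\Lambda)$ with $\delta_x(F)\le 1$ on $\Lambda$, same application of the stretched-exponential inequality \eqref{eq:devineqsubexp}, and the same final rearrangement after complementation. The one place where you diverge is the estimate of $\E_{\mu_\beta^+}[F]$. The paper stays inside the Orlicz framework of Theorem \ref{thm-seb}: it combines the Luxemburg-norm bound \eqref{eq:lux-ising}, $\|F-\E[F]\|_{M_\varrho}\le K_\varrho\sqrt{|\Lambda|}$, with the lower bound in \eqref{LpOrlicz} and the fact that $F\equiv 0$ on a set of mass $\tfrac12$, optimizing $q\mapsto 2^{-1/q}q^{-1/\varrho}$ over even integers (the optimum is $q=2$) to get $\E_{\mu_\beta^+}[F]\le 2^{\frac{1}{\varrho}+\frac12}K_\varrho B_\varrho\sqrt{|\Lambda|}$. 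You instead invoke the order-$2$ moment bound of Theorem \ref{thm:allmomentsIsing} and the identity $\E[\un_{\EuScript{B}_\Lambda}(F-\E F)^2]=\tfrac12(\E F)^2$ to get $\E_{\mu_\beta^+}[F]\le\sqrt{2C_2(\beta)}\,\sqrt{|\Lambda|}$. Both are legitimate and valid for the same threshold $\bar\beta$ (the two theorems share it); yours is a bit more elementary, since it bypasses the Luxemburg norm entirely, at the cost of importing Theorem \ref{thm:allmomentsIsing} alongside Theorem \ref{thm-seb}, and it produces a constant $c'_\varrho$ that in fact depends only on $\beta$ and not on $\varrho$ — which is harmless for the statement. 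In essence your expectation bound is exactly the one the paper uses in the moment-based theorem immediately preceding this one (the case $p=1$), recycled here.
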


\begin{proof}
Consider the local function $F(\omega)=\db_\Lambda(\omega,\EuScript{B}_\Lambda)$ which is such that
$\delta_x(F)\leq 1$ for all $x\in \Lambda$. We apply Theorem \ref{thm-seb}.
Using \eqref{eq:devineqsubexp} we get
\[
\gmu_\beta^+\left\{\omega\in \Omega :  F(\omega)\geq u+\E_{\mu_\beta^+} [F]\right\} \leq
4\, \exp\left(-\frac{c_\varrho u^{\varrho}}{|\Lambda|^\frac{\varrho}{2}}\right),
\]
for all $u>0$. We now estimate $\E_{\mu_\beta^+} [F]$ from above by using \eqref{LpOrlicz},
\eqref{eq:lux-ising} and the fact that $F\equiv 0$ on $\EuScript{B}_\Lambda$ :
\begin{equation}\label{eq:fifu}
K\!_\varrho\, \sqrt{|\Lambda|}\geq \|F-\E_{\mu_\beta^+}[F]\|_{M\!_\varrho}\geq
B\!_\varrho^{-1}\E_{\mu_\beta^+} [F]
\sup_{q\in 2\N} (2^{-1/q}q^{-1/\varrho}).
\end{equation}
The function $\theta : \R^+\backslash\{0\} \to \R^+$ defined by $\theta(u)=2^{-1/u}u^{-1/\varrho}$ has a unique maximum at $u=\varrho \ln 2<2$. Hence we take $q=2$ in the right-hand side of \eqref{eq:fifu}, which gives
\[
\E_{\mu_\beta^+} [F]\leq 2^{\frac{1}{\varrho}+\frac12}K\!_\varrho \, B\!_\varrho \, (2n+1)^\frac{d}{2}.
\]
The rest of the proof is the same as in the previous proofs and we obtain the announced inequality with
$c'_\varrho=2^{\frac{1}{\varrho}+\frac12}K\!_\varrho \, B\!_\varrho$. 
\end{proof}

\section{Application 7: Almost-sure central limit theorem}\label{sec:ASCLT}

In this section we show how to use concentration inequalities to get a limit theorem.
We consider limits along cubes but we can generalize without further effort to van Hove sequences.

\subsection{Some preliminary definitions}

\begin{definition}\label{def:clt}
\leavevmode\\
Let $\nu$ be a shift-invariant probability measure and let $f:\Omega\to\R$ be a continuous function
such that $\int f\dd\nu=0$.
We say that $(f,\nu)$ satisfies the central limit theorem with variance $\sigma_f^2$
if there exists a number $\sigma_f\geq 0$ such that for all $u\in\R$
\[
\lim_{n\to\infty}\nu\left\{
\omega\in \Omega :
\frac{\sum_{x\in C_n}f(T_x \omega)}{(2n+1)^{\frac{d}{2}}} \leq u\right\}
=
\frac{1}{\sigma\!_f\sqrt{2\pi}}\int_{-\infty}^u \exp\left(-\frac{v^2}{2\sigma\!_f^2}\right) \dd v.
\]
\end{definition}

As a convention, we define the right-hand side to be the Dirac mass at $0$ if $\sigma_f=0$.
There is of course no loss of generality in considering continuous functions such that
$\int f\dd\nu=0$. In the cases we are going to consider, one has
\begin{equation}\label{eq:sigma}
\sigma_f^2=\sum_{x\in\Zd} \int f \cdot f\circ T_x \dd\nu<\infty.
\end{equation}

We need the following convenient definition.
\begin{definition}[Summable decay of correlations]
\leavevmode\\
Given a shift-invariant probability measure $\nu$ and a continuous function $f$ such that $\int f\dd \nu=0$, we say that we have a summable decay of correlations if
\begin{equation}
\label{def:summable-cov}
\mathlarger{\sum_{x\in\Zd}} \int |f \cdot f\circ T_x| \dd\nu<+\infty.
\end{equation}
\end{definition}
It follows from \eqref{def:summable-cov} that for all $n\in\N$
\begin{equation}\label{eq:est-variance}
\int \Big( \sum_{x\in C_n} f\circ T_x\Big)^2 \dd\nu\leq C\, (2n+1)^{d}
\end{equation}
where $C=\mathlarger{\sum_{x\in\Zd}} \int |f \cdot f\circ T_x| \dd\nu$.

The almost-sure central limit theorem is about replacing the convergence in law by the almost-sure convergence (in weak topology) of the following empirical logarithmic average:
\begin{equation}\label{def:log-av}
\frac{1}{L_N}\, \mathlarger{\sum_{n=1}^N}\, \frac{1}{n}\, \ggdelta_{\frac{\sum_{x\in C_n}f(T_x \omega)}{(2n+1)^{d/2}}}
\end{equation}
where
\[
L_N=\sum_{n=1}^N\frac{1}{n}=\ln N + \Oun.
\]

For each $N\in\N$ and $\omega\in \Omega$, \eqref{def:log-av} defines a probability measure on $\R$. 
Our goal is to prove that it converges, for $\nu$-almost every $\omega$, to the Gaussian measure $G_{0,\sigma_f^2}$ defined by
\[
\dd G_{0,\sigma\!_f^2}(v)=\frac{1}{\sigma_f\sqrt{2\pi}} \, \exp\Big(-\frac{v^2}{2\sigma\!_f^2}\Big)\dd v, \; v\in\R.
\] 
When such a convergence takes place, one says that $(f,\nu)$ satisfies the almost-sure central limit theorem.
We shall prove a stronger result: the convergence will be with respect to the Kantorovich distance
$\distk$ which is defined as follows.
Let
\[
\mathscr{L}=\{\rho:\R\to\R : \rho\;\textup{is {\em 1}-Lipschitz}\}.
\]
For two probability measures $\lambda,\lambda'$ on $\R$, let
\[
\distk(\lambda,\lambda')
=\sup_{\rho\in \mathscr{L}_0} \left(\int \rho \dd\lambda-\int \rho \dd\lambda'\right)
\]
where $\mathscr{L}_0$ is the set of functions in $\mathscr{L}$ vanishing at the origin.
We can replace $\mathscr{L}$ by $\mathscr{L}_0$ in the definition of the distance because we consider probability measures.
This distance metrizes the weak topology on the set of probability measures  on $\R$ such that $\int_{\R} d(u_0,u) \dd\lambda(u)<\infty$ (where $u_0\in\R$ is an arbitrary chosen point).

\subsection{An abstract theorem and some applications}

The following abstract theorem says that, if the central limit theorem holds and if we have $\mcb{2,C_2}$, then we have an almost-sure central limit theorem. In fact, the convergence to the Gaussian measure is with respect to the Kantorovich distance, which is stronger than weak convergence.

\begin{theorem}\label{thm:asclt}
\leavevmode\\
Let $f\in\Delta_1(\Omega)$ and $\nu$ be a shift-invariant probability measure.
Assume that the following conditions hold:
\begin{enumerate}
\item
$(f,\nu)$ satisfies the central limit theorem with variance $\sigma_f^2>0$
(in the sense of Definition \ref{def:clt}); 
\item
$\nu$ satisfies $\mcb{2,C_2}$ (in the sense of Definition \ref{def:moments});
\item
the decay of correlations is summable in the sense of \eqref{def:summable-cov}.
\end{enumerate}
Then, for $\nu$-almost every $\omega\in \Omega$, 
\[
\lim_{N\to\infty}\distk\big(\mathcal{A}_{N,\,\omega},G_{0,\sigma_f^2}\big)=0.
\]
\end{theorem}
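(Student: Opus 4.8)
I would prove the almost-sure central limit theorem in the standard two-step form: first establish convergence of the empirical logarithmic averages tested against a fixed $1$-Lipschitz function $\rho$, and then upgrade to convergence in Kantorovich distance by a separability/approximation argument. Throughout, write $S_n f(\omega) = \sum_{x\in C_n} f(T_x\omega)$, $Z_n(\omega) = S_n f(\omega)/(2n+1)^{d/2}$, and for $\rho\in\mathscr{L}_0$ put
\[
A_{N,\omega}(\rho) = \frac{1}{L_N}\sum_{n=1}^N \frac{1}{n}\,\rho(Z_n(\omega)).
\]
The target is that $A_{N,\omega}(\rho) \to \int \rho\,\dd G_{0,\sigma_f^2}$ for $\nu$-a.e. $\omega$, simultaneously for all $\rho$ in a countable dense family, which then yields $\distk(\mathcal{A}_{N,\omega},G_{0,\sigma_f^2})\to 0$.

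\textbf{Step 1: expectation converges.} Since $(f,\nu)$ satisfies the CLT with variance $\sigma_f^2$, $Z_n$ converges in law to $G_{0,\sigma_f^2}$, hence for any bounded continuous $\rho$, $\E_\nu[\rho(Z_n)] \to \int\rho\,\dd G_{0,\sigma_f^2}$, and therefore by Cesàro summation along the weights $1/n$ we get $\E_\nu[A_{N,\omega}(\rho)] \to \int\rho\,\dd G_{0,\sigma_f^2}$. For $1$-Lipschitz $\rho$ vanishing at $0$ one needs a mild uniform integrability input, supplied by \eqref{eq:est-variance}: $\E_\nu[Z_n^2]\leq C$ uniformly in $n$, so $\{\rho(Z_n)\}$ is uniformly integrable and the convergence of expectations persists.

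\textbf{Step 2: variance of $A_{N,\omega}(\rho)$ is summable along a subsequence.} This is the heart of the matter. I would estimate $\operatorname{Var}_\nu\big(A_{N,\omega}(\rho)\big)$ by expanding and bounding the covariances $\operatorname{Cov}_\nu(\rho(Z_m),\rho(Z_n))$ for $m\le n$. The key device is the concentration bound $\mcb{2,C_2}$: the function $\omega\mapsto \rho(Z_n(\omega))$ is a function of the $\omega_x$'s whose oscillation at site $x$ is controlled, since $\rho$ is $1$-Lipschitz and $Z_n = S_n f/(2n+1)^{d/2}$, giving $\|\ushort{\delta}(\rho(Z_n))\|_2^2 \le \|\ushort{\delta}(S_nf)\|_2^2/(2n+1)^d \le \|\ushort{\delta}(f)\|_1^2$ by Lemma \ref{lem:deltasums}; hence $\operatorname{Var}_\nu(\rho(Z_n))\le C_2\|\ushort{\delta}(f)\|_1^2$ is bounded uniformly in $n$. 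The decorrelation between scales $m\ll n$ comes from the fact that $\rho(Z_n)$ can be approximated by a function depending only on a scaled version of $S_n f$ "after removing the contribution of $C_m$": more precisely, by a martingale-type / telescoping argument one shows $|\operatorname{Cov}_\nu(\rho(Z_m),\rho(Z_n))| \le C\,(m/n)^{\alpha}$ for some $\alpha>0$ (essentially $\alpha = d/2$, coming from the normalization ratio), using that replacing $Z_n$ by $Z_n$ recomputed after resampling $\omega_{C_m}$ changes it by at most $O((2m+1)^{d/2}\|\ushort\delta(f)\|_1/(2n+1)^{d/2})$ in $L^2(\nu)$, again by $\mcb{2,C_2}$ and Lemma \ref{lem:deltasums}. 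Summing $\sum_{m\le n\le N}\frac{1}{mn}(m/n)^\alpha = O(\log N)$ yields $\operatorname{Var}_\nu(A_{N,\omega}(\rho)) = O(1/\log N)$. This decay is too slow for Borel–Cantelli directly, so I would pass to the subsequence $N_k = \lfloor e^{k/\log k}\rfloor$ (or $N_k = \lfloor \exp(k^{1-\eta})\rfloor$), along which $\sum_k \operatorname{Var}_\nu(A_{N_k,\omega}(\rho)) < \infty$; Borel–Cantelli then gives $A_{N_k,\omega}(\rho)\to\int\rho\,\dd G_{0,\sigma_f^2}$ for $\nu$-a.e. $\omega$.

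\textbf{Step 3: fill the gaps and upgrade.} Between consecutive $N_k$ and $N_{k+1}$, the quantity $A_{N,\omega}(\rho)$ varies little because $\rho$ is $1$-Lipschitz and bounded on the relevant range in an averaged sense, and $L_{N_{k+1}}/L_{N_k}\to 1$ by the choice of subsequence; a monotonicity-in-the-partial-sums argument (as in classical ASCLT proofs, e.g. Lacey–Philipp) extends the a.e. convergence from $\{N_k\}$ to all $N$. Finally, fix a countable family $\{\rho_j\}\subset\mathscr{L}_0$ that is dense (in the topology of uniform convergence on compacts, with a growth bound) so that convergence of $\int\rho_j\,\dd\lambda$ for all $j$ forces convergence in $\distk$ on the set of measures with uniformly bounded first moment; take the intersection over $j$ of the full-measure sets to get a single full-measure set of $\omega$ on which $A_{N,\omega}(\rho_j)\to\int\rho_j\,\dd G_{0,\sigma_f^2}$ for every $j$, hence $\distk(\mathcal{A}_{N,\omega},G_{0,\sigma_f^2})\to 0$. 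The uniform first-moment bound needed here, $\frac{1}{L_N}\sum_{n\le N}\frac1n \int |u|\,\dd(\mathcal{A}_{N,\omega}) = \frac{1}{L_N}\sum_{n\le N}\frac1n |Z_n(\omega)|$ staying bounded a.e., follows from \eqref{eq:est-variance} together with $\mcb{2,C_2}$ by the same variance estimate applied to $\rho(u)=|u|$.

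\textbf{Main obstacle.} The delicate point is Step 2: obtaining a genuine cross-scale covariance decay $|\operatorname{Cov}_\nu(\rho(Z_m),\rho(Z_n))|\lesssim (m/n)^\alpha$ from the moment concentration bound alone. Unlike in an i.i.d. or explicitly mixing setting, here we have no direct mixing hypothesis — only $\mcb{2,C_2}$ and summable correlations of $f$ itself. The mechanism has to be: express $\rho(Z_n) - \E_\nu[\rho(Z_n)\mid \mathfrak{B}_{C_m^c}]$ and bound its $L^2$ norm via the oscillations of $\rho(Z_n)$ restricted to sites in $C_m$, which contribute $|C_m|\cdot\|\ushort\delta(f)\|_1^2/(2n+1)^d \asymp (m/n)^{d}$ to $\|\ushort\delta\|_2^2$ — combined with the concentration bound this gives an $L^2$ error of order $(m/n)^{d/2}$, and Cauchy–Schwarz against the uniformly-bounded-variance $\rho(Z_m)$ closes the covariance estimate. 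Making this conditioning argument rigorous — in particular checking that $\mcb{2,C_2}$ genuinely controls the $L^2$ oscillation of a conditioned function — is where the real work lies; everything else is the classical ASCLT scaffolding.
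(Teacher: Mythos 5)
There is a genuine gap, and it sits exactly where you located it. Your Step 2 rests on a cross-scale covariance decay $|\operatorname{Cov}_\nu(\rho(Z_m),\rho(Z_n))|\lesssim (m/n)^{\alpha}$, to be extracted from $\mcb{2,C_2}$ via a conditioning argument. This cannot work from the stated hypotheses. First, even if $\|\rho(Z_n)-\E_\nu[\rho(Z_n)\mid\mathfrak{B}_{C_m^c}]\|_{L^2}$ were small, you would still be left with $\operatorname{Cov}_\nu\bigl(\rho(Z_m),\E_\nu[\rho(Z_n)\mid\mathfrak{B}_{C_m^c}]\bigr)$, a covariance between a (nearly) $\mathfrak{B}_{C_m}$-measurable function and a $\mathfrak{B}_{C_m^c}$-measurable one; under a general shift-invariant measure satisfying only a moment concentration bound these are not decorrelated — no mixing hypothesis is assumed, and summable correlations are assumed only for $f$ itself, not for arbitrary nonlinear functionals. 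Second, $\mcb{2,C_2}$ bounds the variance by the full $\ell^2$ norm of the oscillation vector; it does not give you a bound on the distance to a conditional expectation in terms of the oscillations at the conditioned sites only (that would be a tensorization/martingale-difference property you do not have). The paper's proof shows this whole difficulty is a mirage: it applies $\mcb{2,C_2}$ \emph{once} to the entire logarithmic average $F_N^{(\rho)}=\frac{1}{L_N}\sum_{n\le N}\frac1n\bigl(\rho(Z_n)-\int\rho\,\dd G\bigr)$, whose oscillation vector is dominated by the convolution $\ushort{\delta}(f)*\boldsymbol{u}^{(N)}$ with $\boldsymbol{u}^{(N)}_x=\sum_n \frac{1}{n(2n+1)^{d/2}L_N}\un_{C_n}(x)$; a direct computation gives $\|\boldsymbol{u}^{(N)}\|_2^2\le c/L_N$, hence $\operatorname{Var}_\nu(F_N^{(\rho)})\le c\,C_2\|\ushort{\delta}(f)\|_1^2/L_N$ with no decomposition into scales at all. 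The same oscillation bound holds uniformly in $\rho$, so the paper applies it directly to the supremum $\distk(\mathcal{A}_{N,\cdot},G_{0,\sigma_f^2})$, bypassing your countable dense family in the a.s.\ step (a compactness/net argument is used only to compute the limit of the expectation).

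A second, concrete error: both of your proposed subsequences fail. With $\operatorname{Var}= O(1/L_N)=O(1/\log N)$ you need $\sum_k 1/\log N_k<\infty$. For $N_k=\lfloor e^{k/\log k}\rfloor$ this is $\sum_k (\log k)/k=\infty$, and for $N_k=\lfloor\exp(k^{1-\eta})\rfloor$ it is $\sum_k k^{-(1-\eta)}=\infty$. You must take a \emph{faster} growing sequence, $N_k=e^{k^{1+\delta}}$ with $\delta>0$ as in the paper, for which $\sum_k k^{-(1+\delta)}<\infty$ while still $L_{N_{k+1}}/L_{N_k}\to 1$, so the gap-filling between consecutive $N_k$ (which otherwise follows your outline, using \eqref{eq:est-variance} to control the Ces\`aro tail) goes through.
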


We now apply this theorem in two situations, namely under Dobrushin's uniqueness condition, and for the
low-temperature Ising ferromagnet.

\begin{theorem}\label{thm:dob-asclt}
\leavevmode\\
Let $\Phi\in\mathscr{B}_T$ and assume that the associated specification $\bgamma^{\Phi}$ satisfies  Dobrushin's uniqueness condition \eqref{def:dobcoef}.
Moreover, assume that
\begin{equation}\label{dobddelta}
\sum_{x\in\Zd} \|x\|_\infty^{d+\delta}\, C_{0,x}(\bgamma^\Phi) <+\infty
\end{equation}
for some $\delta>0$, and that $f\in C^0(\Omega)$ satisfies
\begin{equation}\label{deltafd}
\sum_{x\in\Zd} \|x\|_\infty^d\, \delta_x(f) <+\infty.
\end{equation}
Without loss of generality, assume that $\int f \dd\mu_\Phi=0$.
Then, for $\mu_\Phi$-almost every $\omega\in \Omega$, 
\[
\lim_{N\to\infty}\distk
\left(\frac{1}{L_N}\, \mathlarger{\sum_{n=1}^N}\, \frac{1}{n}\,
\ggdelta_{\frac{\sum_{x\in C_n} f(T_x\omega)}{(2n+1)^{d/2}}}\, ,G_{0,\sigma_f^2}\right)=0
\]
where $\sigma_f^2\in\left[0,\infty\right[$ is given by \eqref{eq:sigma}.
\end{theorem}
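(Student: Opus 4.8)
The plan is to verify the three hypotheses of the abstract Theorem~\ref{thm:asclt} for the pair $(f,\mu_\Phi)$ and then invoke it. Recall that, under Dobrushin's condition \eqref{def:dobcoef}, $\mu_\Phi$ is the unique Gibbs measure for $\Phi$ and is shift-invariant, so Theorem~\ref{thm:asclt} is applicable once its conditions (1)--(3) hold. First I would record that $f\in\Delta_1(\Omega)$: since $\|x\|_\infty\geq1$ for every $x\neq0$ and $\delta_0(f)\leq\delta(f)<\infty$ by compactness, condition \eqref{deltafd} gives $\|\ushort{\delta}(f)\|_1\leq\delta(f)+\sum_{x\in\Zd}\|x\|_\infty^d\,\delta_x(f)<\infty$, so the results quoted below apply to $f$.

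Conditions (2) and (3) of Theorem~\ref{thm:asclt} come essentially for free. For (2): by Theorem~\ref{thm:gemb-dob}, $\mu_\Phi$ satisfies $\gcb{D}$ with $D=\frac1{2(1-\mathfrak{c}(\bgamma^{\Phi}))^2}$, and the computation recalled in Subsection~\ref{rem:moments-vs-gb} turns this into $\mcb{2p,C_{2p}}$ with $C_{2p}=p!\,(8D)^p$; in particular $\mu_\Phi$ satisfies $\mcb{2,C_2}$ with $C_2=\frac{4}{(1-\mathfrak{c}(\bgamma^{\Phi}))^2}$. For (3) I would use the Dobrushin comparison theorem (\cite[Chapter 8]{Geo}): with $C_{ij}=C_{i,j}(\bgamma^{\Phi})$ and $D_{ij}=\sum_{n\geq0}(C^n)_{ij}$ one has, by shift-invariance, $\sum_j C_{ij}=\mathfrak{c}(\bgamma^{\Phi})$, hence $\sum_j D_{ij}=\sum_{n\geq0}\mathfrak{c}(\bgamma^{\Phi})^n=\frac1{1-\mathfrak{c}(\bgamma^{\Phi})}$, and, for local $f,g$, $\big|\mathrm{Cov}_{\mu_\Phi}(f,g)\big|\leq\sum_{i,j\in\Zd}\delta_i(f)\,D_{ij}\,\delta_j(g)$ (the extension to $f\in\Delta_1(\Omega)$ being routine by approximation). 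Applying this with $g=f\circ T_z$, using $\sum_{z\in\Zd}\delta_j(f\circ T_z)=\|\ushort{\delta}(f)\|_1$ for every $j$ and $\int f\dd\mu_\Phi=0$, summation over $z$ gives
\[
\sum_{z\in\Zd}\Big|\int f\cdot(f\circ T_z)\,\dd\mu_\Phi\Big|
\;\leq\;\sum_{i,j\in\Zd}\delta_i(f)\,D_{ij}\sum_{z\in\Zd}\delta_j(f\circ T_z)
\;=\;\frac{\|\ushort{\delta}(f)\|_1^{2}}{1-\mathfrak{c}(\bgamma^{\Phi})}\;<\;\infty,
\]
which is exactly \eqref{def:summable-cov}. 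In particular the series \eqref{eq:sigma} converges absolutely and defines $\sigma_f^2\in[0,\infty)$.

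It remains to check condition (1), the central limit theorem for $(f,\mu_\Phi)$ with variance $\sigma_f^2$; this is the only nontrivial input and is precisely where the polynomial-decay hypotheses \eqref{dobddelta} and \eqref{deltafd} are really used. I would import the classical fact that, under Dobrushin's uniqueness condition together with a moment condition of order strictly larger than $d$ on the interaction matrix $\big(C_{0,x}(\bgamma^{\Phi})\big)_{x}$ and a localization condition of order $d$ on $f$, the normalized sums $(2n+1)^{-d/2}S_nf$ converge in distribution to $\mathcal{N}(0,\sigma_f^2)$ with $\sigma_f^2=\sum_{x\in\Zd}\int f\cdot(f\circ T_x)\,\dd\mu_\Phi$; see \cite{Kul} and \cite[Chapter 8]{Geo}. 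If $\sigma_f^2>0$, Theorem~\ref{thm:asclt} applies directly and its conclusion is the asserted $\mu_\Phi$-a.s.\ convergence. If $\sigma_f^2=0$, then by the bound above and dominated convergence $(2n+1)^{-d}\,\mathrm{Var}_{\mu_\Phi}(S_nf)\to0$, so $(2n+1)^{-d/2}S_nf\to0$ in $L^2(\mu_\Phi)$, and the assertion (with $G_{0,0}=\delta_0$) follows by the same Borel--Cantelli/Ces\`aro scheme along $N_k=\lfloor e^k\rfloor$ used in the proof of Theorem~\ref{thm:asclt}, now combined with $\mcb{2,C_2}$; I would simply note this and otherwise assume $\sigma_f^2>0$. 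The main obstacle is thus isolated in step (1): the CLT under Dobrushin uniqueness, which we take from the literature rather than prove here.
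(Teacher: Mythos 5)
Your proposal is correct and follows essentially the same route as the paper: one checks the three hypotheses of the abstract Theorem~\ref{thm:asclt} for $(f,\mu_\Phi)$ (the moment bound $\mcb{2,C_2}$ via Theorem~\ref{thm:gemb-dob} and Subsection~\ref{rem:moments-vs-gb}, summable decay of correlations, and the CLT imported from the literature) and then invokes it. The one correction worth making is the attribution of the key external input: the central limit theorem under Dobrushin uniqueness with the polynomial conditions \eqref{dobddelta}--\eqref{deltafd} is Theorem~4.1 of K\"unsch \cite{kunsch}, not \cite{Kul} or \cite[Chapter 8]{Geo}; beyond that, your explicit Dobrushin-comparison derivation of \eqref{def:summable-cov} and your treatment of the degenerate case $\sigma_f^2=0$ (which the abstract theorem, as stated, excludes) supply details the paper leaves implicit.
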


\begin{proof}
The conditions \eqref{dobddelta} and \eqref{deltafd} imply \eqref{def:summable-cov}.
The theorem is a direct consequence of Theorem \ref{thm:asclt} and Theorem 4.1 in \cite{kunsch}.
\end{proof}

The assumptions of the previous theorem are for instance satisfied if $\Phi$ is a finite-range potential with $\beta$ small enough and for any local function $f$. Let us state a corollary for the empirical magnetization $M_n(\omega)=\sum_{x\in C_n}s_0(T_x \omega)$, where $s_0(\omega)=\omega_0$, in the case of spin pair potentials \eqref{def-sum-pair-pot}.

\begin{corollary}
Consider a ferromagnetic spin pair potential $\beta\Phi$ such that $\sum_{x\in\Z^d} \tanh(\beta J(x))<1$. Assume that 
\begin{equation}\label{eq:jj}
\beta \sum_{x\in\Zd} \|x\|_\infty^{d+\delta}\, J(x) <+\infty
\end{equation}
for some $\delta>0$. Then, for $\gmu_{\beta\Phi}$-almost every $\omega\in\Omega$, we have
\[
\lim_{N\to\infty}\distk\left(\frac{1}{\ln N}\, \mathlarger{\sum_{n=1}^N}\, \frac{1}{n}\, \ggdelta_{M_n(\omega)/(2n+1)^{\frac{d}{2}}}, G_{0,\sigma_\beta^2}\right)=0
\]
where 
\[
\sigma_\beta^2=\mathlarger{\sum_{x\in\Zd}} \int s_0\cdot s_0\circ T_x  \dd \gmu_{\beta\Phi} \in \left]0,\infty\right[.
\]
\end{corollary}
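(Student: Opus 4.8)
The plan is to obtain this corollary as the special case $f=s_0$, $s_0(\omega)=\omega_0$, of Theorem \ref{thm:dob-asclt}, applied with the potential $\beta\Phi$ where $\Phi$ is the ferromagnetic spin pair potential \eqref{def-sum-pair-pot}. Everything then reduces to checking the three hypotheses of that theorem — Dobrushin's uniqueness condition \eqref{def:dobcoef}, the weighted summability \eqref{dobddelta} of the Dobrushin coefficients, and the weighted summability \eqref{deltafd} of the oscillations of $f$ — together with the identification of the limiting variance. The replacement of the normalization $1/L_N$ of Theorem \ref{thm:dob-asclt} by $1/\ln N$ is harmless, since $L_N=\ln N+\Oun$.

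First I would record the termwise bound $C_{0,x}(\bgamma^{\beta\Phi})\leq\tanh(\beta J(x))$ — this is precisely the estimate underlying the inequality $\mathfrak{c}(\bgamma^{\beta\Phi})\leq\sum_x\tanh(\beta J(x))$ recalled in the discussion of \eqref{eq:tanh} — and, using $\tanh u\leq u$ for $u\geq0$, also $C_{0,x}(\bgamma^{\beta\Phi})\leq\beta J(x)$. The first bound together with the hypothesis $\sum_x\tanh(\beta J(x))<1$ gives \eqref{def:dobcoef}, so that $\mu_{\beta\Phi}$ is the unique, shift-invariant Gibbs measure. The second bound together with \eqref{eq:jj} gives $\sum_x\|x\|_\infty^{d+\delta}C_{0,x}(\bgamma^{\beta\Phi})\leq\beta\sum_x\|x\|_\infty^{d+\delta}J(x)<\infty$, which is \eqref{dobddelta}. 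Condition \eqref{deltafd} is immediate for $f=s_0$, since $\delta_x(s_0)=0$ for $x\neq0$ and $\delta_0(s_0)=2$, whence $\sum_x\|x\|_\infty^d\,\delta_x(s_0)=0$. Finally, the potential \eqref{def-sum-pair-pot} has no single-site term and is invariant under the global spin flip $\omega\mapsto-\omega$; by uniqueness, $\mu_{\beta\Phi}$ inherits this symmetry, so $\E_{\mu_{\beta\Phi}}[s_0]=0$ and no centering is needed.

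It remains to identify $\sigma_\beta^2$ and to see that it is strictly positive. Theorem \ref{thm:dob-asclt} delivers, via \eqref{eq:sigma}, the variance $\sigma_\beta^2=\sum_{x\in\Zd}\int s_0\cdot s_0\circ T_x\,\dd\mu_{\beta\Phi}$, which is finite because \eqref{dobddelta} and \eqref{deltafd} imply the summable decay of correlations \eqref{def:summable-cov}. For the strict positivity I would invoke the second Griffiths inequality, valid for ferromagnetic spin pair interactions (see \cite{ellis}) and preserved under the thermodynamic limit: it yields $\int s_0\cdot s_0\circ T_x\,\dd\mu_{\beta\Phi}\geq\big(\E_{\mu_{\beta\Phi}}[s_0]\big)^2=0$ for every $x\in\Zd$, whence $\sigma_\beta^2\geq\int s_0^2\,\dd\mu_{\beta\Phi}=1>0$. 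With all hypotheses of Theorem \ref{thm:dob-asclt} verified, the asserted almost-sure convergence in Kantorovich distance of the logarithmic averages of the $\ggdelta_{M_n(\omega)/(2n+1)^{d/2}}$ to $G_{0,\sigma_\beta^2}$ follows. The only step that is not a pure verification of hypotheses is the positivity of the variance, for which the ferromagnetic correlation inequality is the natural tool; everything else is routine.
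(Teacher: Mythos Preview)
Your proof is correct and follows the same approach as the paper: the corollary is obtained as the specialization $f=s_0$ of Theorem~\ref{thm:dob-asclt}, and the paper does not give a separate proof beyond a few remarks following the statement. Your verification of the hypotheses is in fact more explicit than anything in the paper --- in particular, the paper simply asserts $\sigma_\beta^2\in\,]0,\infty[$ without justification, whereas you supply the argument via the second Griffiths inequality, which is the right tool here.
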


Recall that in the regime considered in this corollary we have $\E_{\mu_{\beta\Phi}}[s_0]=0$.
Observe that, for sufficiently high temperature, condition \eqref{eq:jj} implies $\sum_{x\in\Z^d} \tanh(\beta J(x))<1$. It is well known (see \cite{gross}) that in Dobrushin's uniqueness regime one has for each $\beta$
\[
\left| \int s_0\cdot s_0\circ T_x\, \dd \gmu_{\beta\Phi} \right| \leq C\,  \|x\|_\infty^{-(d+\delta)}, 
\]
where $C>0$ is independent of $x$. (Recall that $\int s_0 \dd \gmu_{\beta\Phi}=0$ for $\beta<\beta_c$.)

The next theorem is an almost-sure central limit theorem  for the empirical magnetization in the low-temperature Ising ferromagnet.

\begin{theorem}\label{thm:ising-low-asclt}
\leavevmode\\
Let $\gmu_\beta^+$ be the plus phase of the low-temperature Ising model. 
Then there exists  $\bar{\beta}$ such that, for each $\beta>\bar{\beta}$ and for $\gmu_\beta^+$-almost every $\omega\in\Omega$, we have
\[
\lim_{N\to\infty}\distk\left(\frac{1}{\ln N}\, \mathlarger{\sum_{n=1}^N}\, \frac{1}{n}\, \ggdelta_{(M_n(\omega)-\E_{\mu_\beta^+}[s_0])/(2n+1)^{\frac{d}{2}}}, G_{0,\sigma\!_\beta^2}\right)=0
\]
where 
\[
\sigma\!_\beta^2=\mathlarger{\sum_{x\in\Zd}} \int s_0\cdot s_0\circ T_x \dd \gmu_\beta^+ \in \left]0,\infty\right[.
\]
\end{theorem}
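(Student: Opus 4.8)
The plan is to obtain the statement from the abstract almost-sure central limit theorem, Theorem~\ref{thm:asclt}, applied with $\nu=\mu_\beta^+$ and with the local function $f=s_0-\E_{\mu_\beta^+}[s_0]$, which satisfies $\int f\,\dd\mu_\beta^+=0$ and belongs to $\Delta_1(\Omega)$ (indeed $\delta_0(f)=2$ and $\delta_x(f)=0$ for every $x\neq 0$, so $\|\ushort{\delta}(f)\|_1=2$). It thus suffices, for $\beta$ large enough, to verify the three hypotheses of Theorem~\ref{thm:asclt}: (i) the central limit theorem for $(f,\mu_\beta^+)$ with a strictly positive variance $\sigma_\beta^2$; (ii) the moment concentration bound $\mcb{2,C_2}$ for $\mu_\beta^+$; (iii) the summable-decay-of-correlations condition \eqref{def:summable-cov}.

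Condition (ii) is free: by Theorem~\ref{thm:allmomentsIsing} there is $\bar\beta>\beta_c$ such that $\mu_\beta^+$ satisfies $\mcb{2p,C_{2p}(\beta)}$ for all $p\in\N$ as soon as $\beta>\bar\beta$; taking $p=1$ gives $\mcb{2,C_2(\beta)}$. For conditions (i) and (iii) I would invoke the standard low-temperature analysis of the plus phase through the contour (Peierls) representation and a convergent cluster expansion, valid once $\beta$ is large enough (enlarging $\bar\beta$ if necessary). That expansion yields the exponential decay of the truncated two-point function,
\[
\big|\E_{\mu_\beta^+}[s_0\, s_0\!\circ T_x]-\E_{\mu_\beta^+}[s_0]^2\big|\leq C_1\, e^{-c_1\|x\|_\infty},\qquad x\in\Zd,
\]
for some $C_1,c_1>0$, whence $\sum_{x\in\Zd}\big|\E_{\mu_\beta^+}[f\cdot f\circ T_x]\big|<\infty$; this is exactly \eqref{def:summable-cov}, so condition (iii) holds, and it also guarantees $\sigma_\beta^2=\sum_{x\in\Zd}\int s_0\cdot s_0\circ T_x\,\dd\mu_\beta^+<\infty$. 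For the central limit theorem itself one may either read it off the same cluster expansion (control of all cumulants of $\sum_{x\in C_n}f(T_x\omega)$) or, more cheaply, observe that $\mu_\beta^+$ is a shift-invariant FKG measure with summable covariances and apply Newman's central limit theorem for associated random fields. In either case the limiting variance is $\sigma_\beta^2$ as above; it is strictly positive because, by the FKG inequality, every truncated correlation $\E_{\mu_\beta^+}[s_0 s_0\circ T_x]-\E_{\mu_\beta^+}[s_0]^2$ is nonnegative, so $\sigma_\beta^2\geq \E_{\mu_\beta^+}[s_0^2]-\E_{\mu_\beta^+}[s_0]^2=1-\E_{\mu_\beta^+}[s_0]^2>0$, using that $|\E_{\mu_\beta^+}[s_0]|<1$ at any finite $\beta$. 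Hence $\sigma_\beta^2\in(0,\infty)$ and condition (i) holds.

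With (i)--(iii) checked for all $\beta>\bar\beta$, Theorem~\ref{thm:asclt} applies and gives, for $\mu_\beta^+$-almost every $\omega$, the convergence of the logarithmic empirical averages to $G_{0,\sigma_\beta^2}$ in Kantorovich distance, which is the claimed statement (after translating $\sum_{x\in C_n}f(T_x\omega)=\sum_{x\in C_n}\big(s_0(T_x\omega)-\E_{\mu_\beta^+}[s_0]\big)$ into the normalisation appearing in the theorem). The only genuinely non-trivial inputs are (i) and (iii), and the crux is (i): the central limit theorem for the low-temperature plus phase. This is where an external result is needed --- either low-temperature cluster-expansion estimates or the combination of exponential decay of truncated correlations with an FKG-type CLT. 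The positivity of the limiting variance, by contrast, is elementary once one knows the truncated correlations are nonnegative (FKG) and that the spontaneous magnetisation is strictly below one in absolute value; and condition (ii) costs nothing, being supplied directly by Theorem~\ref{thm:allmomentsIsing}.
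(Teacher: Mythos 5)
Your proposal is correct and follows essentially the same route as the paper: apply the abstract Theorem~\ref{thm:asclt} to $f=s_0-\E_{\mu_\beta^+}[s_0]$, with hypothesis (ii) supplied by Theorem~\ref{thm:allmomentsIsing} and hypotheses (i) and (iii) imported from the low-temperature Ising literature. The paper's one-line proof simply cites Martin-L\"of \cite{martinlof} for the central limit theorem and the requisite decay of correlations where you invoke the cluster expansion or Newman's FKG-based CLT; your additional remark on the strict positivity of $\sigma_\beta^2$ via FKG is a harmless extra detail that does not change the argument.
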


\begin{proof}
The theorem follows at once from Theorem \ref{thm:allmomentsIsing}, \cite{martinlof} and Theorem \ref{thm:asclt}.
\end{proof}

\subsection{Proof of the abstract theorem}

We now prove Theorem \ref{thm:asclt}. Throughout the proof, we use the notations
\[
S_nf=\sum_{x\in C_n}f\circ T_x
\quad
\text{and}
\quad
\mathcal{A}_{N,\omega}=
\frac{1}{L_N}\,\mathlarger{\sum_{n=1}^N}\, \frac{1}{n}\, \ggdelta_{\frac{S_nf(\omega)}{(2n+1)^{d/2}}}\, .
\]
{\em First step}. We are going to prove that
\begin{equation}\label{eq:esp-tend-0}
\lim_{N\to\infty} \E_\nu\big[\distk\big(\mathcal{A}_{N,\cdot},G_{0,\sigma_f^2}\big)\big]=0.
\end{equation}
Let $B>0$. Since for any $\rho\in\mathscr{L}_0$ one has $|\rho(v)|\leq |v|$ for all $v$ we have
\begin{align}
\nonumber
\MoveEqLeft[6]
\distk\big(\mathcal{A}_{N,\omega},G_{0,\sigma_f^2}\big)  \leq 
\sup_{\rho\, \in \mathscr{L}_0}  \int_{-B}^B \rho(v) \big(\dd\mathcal{A}_{N,\omega}(v)-\dd G_{0,\sigma_f^2}(v)\big)
\\
\label{eq:A}
& \quad\quad
+ \int_{\{|v|>B\}} |v|\dd\mathcal{A}_{N,\omega}(v)
+\int_{\{|v|>B\}} |v|\dd G_{0,\sigma_f^2}(v).
\end{align}
The last integral is obviously bounded by $c_1/B$ where $c_1>0$ depends only on $f$. (It is indeed much smaller but this bound suffices.)
We now bound the expectation of the second term in the r.h.s., uniformly in $N$. 
Using \eqref{eq:est-variance} and the inequality
\[
\E_\nu\left[ \un_{(B,+\infty)}(Z)\, Z\right]
\leq \frac{\E_\nu[Z^2]}{B},
\]
which follows from Cauchy-Schwarz inequality and Bienaym\'e-Chebyshev inequality, we get
\begin{align}
\nonumber
\MoveEqLeft \E_\nu\Big[ \int_{\{|v|>B\}} |v|\dd\mathcal{A}_{N,\cdot}(v)\Big]\\
\nonumber
&=\frac{1}{L_N}\sum_{n=1}^N \frac{1}{n}\,
\E_\nu\left[ \un_{(B,+\infty)}\left(\frac{|S_nf|}{(2n+1)^{d/2}}\right)\frac{|S_nf|}{(2n+1)^{d/2}}\right]\\
\label{eq:1}
& \leq \frac{c_2}{B}
\end{align}
where $c_2>0$ is independent of $N$ and $B$.
We turn to the first term in the r.h.s. of \eqref{eq:A}. Since $[-B,B]$ is compact, we can apply Arzel\`a-Ascoli theorem
to conclude that $\mathscr{L}_0$ is precompact in the uniform topology. As a consequence, given $\epsilon>0$, there exists a positive integer $r=r(\epsilon)$ and functions $\tilde\rho_j:[-B,B]\to\R$
in $\mathscr{L}_0$, $j=1,\ldots,r$, such that,  for any $\rho\in \mathscr{L}_0$, there is at least one integer $1\leq j\leq r$ such that
\[
\sup_{|v|\leq B}|\rho(v)-\tilde\rho_j(v)|\leq \epsilon.
\]
Therefore we have
\begin{align}
\nonumber
& \sup_{\rho\in \mathscr{L}_0}  \int_{-B}^B \rho(v) \big(\dd\mathcal{A}_{N,\omega}(v)-\dd G_{0,\sigma_f^2}(v)\big)\\
\label{eq:2}
& \qquad\qquad \leq \sup_{1\leq j\leq r} \int_{-B}^B \tilde\rho_j(v) \big(\dd\mathcal{A}_{N,\omega}(v)-\dd G_{0,\sigma_f^2}(v)\big)+2\epsilon.
\end{align}
To proceed, we need to define, for each function $\tilde\rho_j$, a function $\rho_j\in \mathscr{L}_0$ defined on $\R$ and coinciding with $\tilde\rho_j$ on $[-B,B]$. This is done by setting
\[
\rho_j(v)=
\begin{cases}
0 & \text{if} \; v<-B-|\tilde\rho_j(-B)| \\
\tilde\rho_j(-B)+\sign(\tilde\rho_j(-B))(v+B)  & \text{if}\; -B-|\tilde\rho_j(-B)|\leq v<-B\\
\tilde\rho_j(v) & \text{if} \; v\in[-B,B]\\
\tilde\rho_j(B)-\sign(\tilde\rho_j(B))(v-B) & \text{if}\; B<v\leq B+|\tilde\rho_j(B)|\\
0 & \text{if} \quad v>B+|\tilde\rho_j(B)|.
\end{cases}
\]
Next, for each $1\leq j\leq r$ and for each $N\geq 1$, introduce the functions
\[
\widetilde F_N^{(j)}(\omega)
=\int_{-B}^B \tilde\rho_j(v) \big(\dd\mathcal{A}_{N,\omega}(v)-\dd G_{0,\sigma_f^2}(v)\big)
\]
and
\begin{align*}
F_N^{(j)}(\omega)
& =\int \rho_j(v) \big(\dd\mathcal{A}_{N,\omega}(v)-\dd G_{0,\sigma_f^2}(v)\big)\\
& = \frac{1}{L_N}\sum_{n=1}^N \frac{1}{n}\left[
\rho_j\left(\frac{S_nf(\omega)}{(2n+1)^{d/2}} \right) -\int \rho_j \dd G_{0,\sigma_f^2}
\right].
\end{align*}
We have
\begin{equation}\label{eq:FFR}
\widetilde F_N^{(j)}(\omega)=F_N^{(j)}(\omega)+R_N^{(j)}(\omega)
\end{equation}
where
\begin{equation}\label{eq:EFFR}
\E_\nu\left(\sup_{1\leq j\leq r}|R_N^{(j)}(\omega)|\right) \leq \frac{c_3}{B}
\end{equation}
where $c_3>0$ is independent of $N$, $r$ and $B$.
This estimate is proved as above (see \eqref{eq:A} and \eqref{eq:1}).
We now estimate the variance of $F_N^{(j)}$. Given $z\in\Zd$ we have
\begin{equation}\label{def-vzn}
\delta_z(F_N^{(j)})\leq \boldsymbol{v}_z^{(N)}:=
\frac{1}{L_N} \,\mathlarger{\sum_{n=1}^N}\, \frac{1}{n} \frac{1}{(2n+1)^{d/2}}\, 
\sum_{x\in C_n}\delta_{z-x}(f).
\end{equation}
The r.h.s. term can be rewritten as
$
\sum_{x\in \Zd}\, \delta_{z-x}(f)\,\boldsymbol{u}^{(N)}_x
$
where
\[
\boldsymbol{u}^{(N)}_x:=\mathlarger{\sum_{n=1}^N}\, \frac{1}{n(2n+1)^{d/2}L_N}\, \un_{C_n}(x)\un_{C_N(x)}.
\]
We now apply Young's inequality to get
\[
\|\ushort{\delta}(F_N^{(j)})\|_2^2 \leq \|\ushort{\delta}(f)\|_1^2\, \|\boldsymbol{u}^{(N)}\|_2^2.
\]
Since $f\in \Delta_1(\Omega)$ by assumption, $ \|\ushort{\delta}(f)\|_1^2<+\infty$, and it remains to
estimate $\|\ushort{\boldsymbol{u}}^{(N)}\|_2^2$. We have
\begin{align*}
\|\ushort{\boldsymbol{u}}^{(N)}\|_2^2
&= \frac{1}{L_N^2} \sum_{x\in C_N} \sum_{n=1}^N \sum_{m=1}^N \frac{1}{n}
\frac{1}{m} \frac{1}{(2n+1)^{d/2}} \frac{1}{(2m+1)^{d/2}}\ \un_{C_n}(x)\un_{C_m}(x)\\
&= \frac{2}{L_N^2} \sum_{x\in C_N} \sum_{n=1}^N \sum_{m=n}^N \frac{1}{n}
\frac{1}{m} \frac{1}{(2n+1)^{d/2}} \frac{1}{(2m+1)^{d/2}}\ \un_{C_n}(x)\un_{C_m}(x)\\
&= \frac{2}{L_N^2} \sum_{x\in C_N} \sum_{n=1}^N \sum_{m=n}^N \frac{1}{n}
\frac{1}{m} \frac{1}{(2n+1)^{d/2}} \frac{1}{(2m+1)^{d/2}}\ \un_{C_n}(x)\\
&= \frac{2}{L_N^2}  \sum_{n=1}^N \sum_{m=n}^N \frac{1}{n}
\frac{1}{m} \frac{1}{(2n+1)^{d/2}} \frac{1}{(2m+1)^{d/2}}\ \sum_{x\in C_N}\un_{C_n}(x)\\
&= \frac{2}{L_N^2}  \sum_{n=1}^N \sum_{m=n}^N \frac{1}{n}
\frac{1}{m} \frac{(2n+1)^{d/2}}{(2m+1)^{d/2}}\\
& \leq \frac{c}{L_N^2} \, \sum_{n=1}^N n^{\frac{d}{2}-1}\, \sum_{m=n}^N \frac{1}{m^{\frac{d}{2}+1}}\\
& \leq \frac{c}{L_N^2} \, \sum_{n=1}^N \frac{1}{n}=\frac{c}{L_N}\, ,
\end{align*}
where $c>0$ does not depend on $N$.
Hence, for any $1\leq j\leq r$, we have
\[
\|\ushort{\delta}(F_N^{(j)})\|_2^2
\leq \frac{c\, \|\ushort{\delta}(f)\|_1^2}{L_N}.
\]
Since we assumed that $\nu$ satisfies $\mcb{2,C_2}$, we end up with the
following estimate for the variance of $F_N^{(j)}$:
\begin{equation}\label{eq:var-Fj}
\E_\nu\left[ \left(F_N^{(j)}-\E_\nu[F_N^{(j)}]\right)^2\right]\leq  \frac{c\, C_2 \|\ushort{\delta}(f)\|_1^2}{L_N}.
\end{equation}
We now use \eqref{eq:FFR}, \eqref{eq:EFFR}, Cauchy-Schwarz inequality and
\eqref{eq:var-Fj} to obtain
\begin{align*}
\MoveEqLeft \E_\nu\left[\sup_{1\leq j\leq r} \widetilde{F}_N^{(j)}\right]\\
&\leq \E_\nu\left[\sup_{1\leq j\leq r} F_N^{(j)}\right] +\E_\nu\left[\sup_{1\leq j\leq r} R_N^{(j)}\right]\\
&\leq \E_\nu\left[\sum_{j=1}^r \big|F_N^{(j)}\big|\right]+ \frac{c_3}{B}\\
& \leq \sum_{j=1}^r\E_\nu\left[ \big|F_N^{(j)}-\E_\nu\big[F_N^{(j)}\big]\big|\right]
+ \sum_{j=1}^r \left|\E_\nu\left[F_N^{(j)}\right]\right| + \frac{c_3}{B}\\
& \leq \sum_{j=1}^r \left( \E_\nu\left[ \left(F_N^{(j)}-\E_\nu\big[F_N^{(j)}\big]\right)^2\right]\right)^{\frac{1}{2}}
+ \sum_{j=1}^r \big|\E_\nu\big[F_N^{(j)}\big]\big|+ \frac{c_3}{B}\\
& \leq \frac{r \sqrt{c\, C_2}\, \|\ushort{\delta}(f)\|_1}{\sqrt{L_N}}+ \sum_{j=1}^r \big|\E_\nu\big[F_N^{(j)}\big]\big|+ \frac{c_3}{B}.
\end{align*}
By assumption, we have, for each $j=1,\ldots,r$, $\lim_{N\to\infty} \E_\nu\big[F_N^{(j)}\big]=0$ by the central limit theorem.
Therefore we obtain
\[
\limsup_{N\to\infty}\E_\nu\left[
\sup_{1\leq j\leq r}
\int_{-B}^B \rho_j(v) \big(\dd\mathcal{A}_{N,\omega}(v)-\dd G_{0,\sigma_f^2}(v)\big)\right]\leq \frac{c_3}{B}.
\]
It now follows from \eqref{eq:A}, \eqref{eq:1} and \eqref{eq:2} we have
\[
0\leq \limsup_{N\to\infty} \E_\nu\big[\distk\big(\mathcal{A}_{N,\cdot},G_{0,\sigma_f^2}\big)\big]
\leq 2\epsilon+\frac{c_1+c_2+c_3}{B}.
\]
We now let $\epsilon$ tend to zero, then $B$ to infinity. Therefore we obtain \eqref{eq:esp-tend-0}.

{\em Second step}.  We are going to estimate the variance of $\distk\big(\mathcal{A}_{N,\cdot},G_{0,\sigma_f^2}\big)$. We want to apply \eqref{eq:momentb} with $p=1$ to the following function :
\[
F_N(\omega)=\sup_{\rho\in\mathscr{L}_0} \frac{1}{L_N}\, \mathlarger{\sum_{n=1}^N}\,
\frac{1}{n} \left( \rho\left(\frac{S_nf(\omega)}{(2n+1)^{d/2}} \right)-\int \rho \dd G_{0,\sigma_f^2}\right),
\]
since $\distk\big(\mathcal{A}_{N,\omega},G_{0,\sigma_f^2}\big)=F_N(\omega)$.
To this end, define for each $\rho\in\mathscr{L}_0$ the function
\[
F_N^{(\rho)}(\omega)=\frac{1}{L_N}\sum_{n=1}^N
\frac{1}{n} \left( \rho\left(\frac{S_nf(\omega)}{(2n+1)^{d/2}} \right)-\int \rho \dd G_{0,\sigma_f^2}\right).
\]
Let $z\in\Zd$ and $\omega,\tilde{\omega}\in\Omega$ such that $\omega_y\neq \tilde{\omega}_y$ for all $y\neq z$. We have
\[
F_N^{(\rho)}(\omega) \leq \boldsymbol{v}_z^{(N)} + F_N^{(\rho)}(\tilde{\omega})
\]
where $\boldsymbol{v}_z^{(N)}$ is defined in \eqref{def-vzn}. Now take the supremum over $\rho$ on both
sides to get
\[
F_N(\omega) \leq \boldsymbol{v}_z^{(N)} + F_N(\tilde{\omega}).
\]
The same inequality holds upon interchanging $\omega$ and $\tilde{\omega}$, hence
\[
|F_N(\omega)-F_N(\tilde{\omega})|\leq  \boldsymbol{v}_z^{(N)},
\] 
therefore
\[
\delta_z(F_N)\leq  \boldsymbol{v}_z^{(N)}.
\]
Proceeding as above, we end up with
\[
\|\ushort{\delta}(F_N)\|_2^2 \leq \frac{c\|\ushort{\delta}(f)\|_1^2}{L_N}
\]
where $c'>0$ does not depend on $N$.
Since $\nu$ satisfies $\mcb{2,C_2}$ we have
\[
\E_\nu\left[ \left(\distk\big(\mathcal{A}_{N,\cdot},G_{0,\sigma_f^2}\big)
-\E_\nu\left[\distk\big(\mathcal{A}_{N,\cdot},G_{0,\sigma_f^2}\big)\right]\right)^2\right]
\leq \frac{c'C_2\,\|\ushort{\delta}(f)\|_1^2}{L_N}.
\]
Fix $0<\delta<1$ and let $N_k=e^{k^{1+\delta}}$. From the previous inequality we get at once
\[
\sum_{k} \E_\nu\left[ \left(\distk\big(\mathcal{A}_{N_k,\cdot},G_{0,\sigma_f^2}\big)
-\E_\nu\left[\distk\big(\mathcal{A}_{N_k,\cdot},G_{0,\sigma_f^2}\big)\right]\right)^2\right]
<\infty.
\]
It follows from Beppo Levi's theorem that for $\nu$-almost every $\omega$
\begin{equation}\label{eq:d-Ed}
\lim_{k\to\infty} \Big(
\distk\big(\mathcal{A}_{N_k,\omega},G_{0,\sigma_f^2}\big)
-\E_\nu\left[\distk\big(\mathcal{A}_{N_k,\omega},G_{0,\sigma_f^2}\big)\right]
\Big)=0.
\end{equation}
By \eqref{eq:esp-tend-0}, the theorem will be proved if we can show that 
$N_k <N\leq N_{k+1}$ implies that
\begin{equation}\label{eq:final}
\big|\distk\big(\mathcal{A}_{N,\omega},G_{0,\sigma_f^2}\big)
-
\distk\big(\mathcal{A}_{N_k,\omega},G_{0,\sigma_f^2}\big)\big|\xrightarrow[]{k\to\infty} 0
\end{equation}
for $\nu$-almost every $\omega$. Indeed, if $N_k <N\leq N_{k+1}$, one has
\begin{align*}
\MoveEqLeft \big|\distk\big(\mathcal{A}_{N,\omega},G_{0,\sigma_f^2}\big)
-
\distk\big(\mathcal{A}_{N_k,\omega},G_{0,\sigma_f^2}\big)\big|\\
& \leq
\frac{L_N-L_{N_k}}{L_N}\, \distk\big(\mathcal{A}_{N_k,\omega},G_{0,\sigma_f^2}\big)\\
&  \qquad\qquad+
\sup_{\rho\in\mathscr{L}_0} \frac{1}{L_N} \sum_{n=N_k+1}^N \frac{1}{n}
\left( \rho\left(\frac{S_nf(\omega)}{(2n+1)^{d/2}} \right)-\int \rho \dd G_{0,\sigma_f^2}\right).
\end{align*}
The first term in the r.h.s goes to zero by \eqref{eq:d-Ed}. We handle the second one. We have
\begin{align*}
\MoveEqLeft[6] \left|\sup_{\rho\in\mathscr{L}_0} \frac{1}{L_N} \sum_{n=N_k+1}^N \frac{1}{n}
\left( \rho\left(\frac{S_nf(\omega)}{(2n+1)^{d/2}} \right)-\int \rho \dd G_{0,\sigma_f^2}\right)\right|\\
& \leq 
\frac{1}{L_N} \sum_{n=N_k+1}^N \frac{1}{n}
\left( \frac{\left|S_nf(\omega) \right|}{(2n+1)^{d/2}}+\int |v| \dd G_{0,\sigma_f^2}(v)\right)\\
& \leq 
\frac{1}{L_{N_k}} \sum_{n=N_k+1}^{N_{k+1}} \frac{1}{n}
\left( \frac{\left|S_nf(\omega) \right|}{(2n+1)^{d/2}}+\int |v| \dd G_{0,\sigma_f^2}(v)\right).
\end{align*}
It follows easily from our choice of $(N_k)$ that 
\[
\lim_{k\to\infty} \frac{1}{L_{N_k}} \sum_{n=N_k+1}^{N_{k+1}} \frac{1}{n}\, \int |v| \dd G_{0,\sigma_f^2}(v)=0.
\]
It remains to prove the almost-sure convergence to zero of the sequence $(U_k)$ defined by
\[
U_k=\frac{1}{L_{N_k}} \sum_{n=N_k+1}^{N_{k+1}} 
\frac{\left|S_nf(\omega) \right|}{n(2n+1)^{d/2}}.
\]
For this purpose we estimate the expectation of the square of $U_k$. Using Cauchy-Schwarz inequality and 
\eqref{eq:est-variance} we get
\begin{align*}
\E_\nu[U_k^2]  &\leq  \frac{1}{L_{N_k}^2} \sum_{n_1,n_2=N_k+1}^{N_{k+1}} 
\frac{\left(\E_\nu\left[\left(S_{n_1}f(\omega) \right)^2\right]\right)^{\frac{1}{2}}}{n_1(2n_1+1)^{d/2}}
\,
\frac{\left(\E_\nu\left[\left(S_{n_2}f(\omega) \right)^2\right]\right)^{\frac{1}{2}}}{n_2(2n_2+1)^{d/2}}\\
& \leq \frac{(\ln N_{k+1}-\ln N_k+\Oun)^2}{L_{N_k}^2}\leq \frac{\Oun}{k^2}.
\end{align*}
It follows that $\E_\nu[U_k^2]$ is summable in $k$ and by Beppo Levi's theorem we have that $U_k$ goes to zero
almost surely. Therefore we have proved \eqref{eq:final}, which finishes the proof the theorem.


\end{document}